\setlist[enumerate]{leftmargin=.5in}
\setlist[itemize]{leftmargin=.5in}
\numberwithin{equation}{section}
\numberwithin{algorithm}{section}
   \newtheorem{theorem}{Theorem}[section]
   \newtheorem{lemma}[theorem]{Lemma}
   \newtheorem{corollary}[theorem]{Corollary}
   \newtheorem{proposition}[theorem]{Proposition}
   \newtheorem{definition}[theorem]{Definition}
\newtheorem{remark}[theorem]{Remark}
\newtheorem{example}[theorem]{Example}
\newcommand{\norm}[1]{\lVert#1\rVert}
\newcommand{\fft}{{\rm{fft}}}
\newcommand{\tnn}{{\rm{tnn}}}
\newcommand{\st}{\mbox{s.t.}}
\DeclareMathOperator*{\circc}{bcirc}
\DeclareMathOperator*{\unfold}{unfold}
\newcommand{\fold}{{\rm{fold}}}
\newcommand{\tr}{{\rm{Tr}}}
\newcommand{\sss}{{\mathscr{S}}}
\DeclareMathOperator*{\sgn}{sgn}
\DeclareMathOperator*{\diag}{diag}
\DeclareMathOperator{\prox}{prox}
\DeclareMathOperator*{\argmin}{argmin}
\DeclareMathOperator*{\sqz}{squeeze}
\newcommand{\va}{\mathbf{a}}
\newcommand{\vb}{\mathbf{b}}
\newcommand{\vc}{\mathbf{c}}
\newcommand{\ve}{\mathbf{e}}
\newcommand{\vu}{\mathbf{u}}
\newcommand{\vv}{\mathbf{v}}
\newcommand{\vw}{\mathbf{w}}
\newcommand{\vx}{\mathbf{x}}
\newcommand{\vy}{\mathbf{y}}
\newcommand{\vz}{\mathbf{z}}
\newcommand{\cA}{\mathcal{A}}
\newcommand{\cB}{\mathcal{B}}
\newcommand{\cC}{\mathcal{C}}
\newcommand{\cE}{\mathcal{E}}
\newcommand{\cG}{\mathcal{G}}
\newcommand{\cH}{\mathcal{H}}
\newcommand{\cS}{\mathcal{S}}
\newcommand{\cU}{\mathcal{U}}
\newcommand{\cV}{\mathcal{V}}
\newcommand{\cW}{\mathcal{W}}
\newcommand{\cX}{\mathcal{X}}
\newcommand{\cY}{\mathcal{Y}}
\newcommand{\cZ}{\mathcal{Z}}
\newcommand{\R}{\mathbb{R}}
\newcommand{\E}{\mathbb{E}}
\newcommand{\N}{\mathbb{N}}
\DeclareMathOperator*{\vect}{vec}
\providecommand{\keywords}[1]{\textbf{Keywords: } #1}
\title{Regularized Kaczmarz Algorithms for Tensor Recovery}
\author{Xuemei Chen\thanks{Department of Mathematics and Statistics, University of North Carolina, Wilmington, NC
  (\texttt{chenxuemei@uncw.edu}).}
\and Jing Qin\thanks{Department of Mathematics, University of Kentucky, Lexington, KY
  (\texttt{jing.qin@uky.edu}).}
  }
\date{}
\begin{document}
\maketitle

\begin{abstract}
Tensor recovery has recently arisen in a lot of application fields, such as transportation, medical imaging and remote sensing. Under the assumption that signals possess sparse and/or low-rank structures, many tensor recovery methods have been developed to apply various regularization techniques together with the operator-splitting type of algorithms. Due to the unprecedented growth of data, it becomes increasingly desirable to use streamlined algorithms to achieve real-time computation, such as stochastic optimization algorithms that have recently emerged as an efficient family of methods in machine learning. In this work, we propose a novel algorithmic framework based on the Kaczmarz algorithm for tensor recovery. We provide thorough convergence analysis and its applications from the vector case to the tensor one. Numerical results on a variety of tensor recovery applications, including sparse signal recovery, low-rank tensor recovery, image inpainting and deconvolution, illustrate the enormous potential of the proposed methods.
\end{abstract}

\keywords{Kaczmarz algorithm, tensor recovery, image inpainting, image deblurring, randomized algorithm}

%% REQUIRED
%\begin{AMS}
%  68Q25, 68R10, 68U05
%\end{AMS}

\section{Introduction}
% Outline: review of tensor recovery, Kaczmarz algorithms, its applications, our motivation, proposed work, our contributions/novelty, results implication etc.
Tensor is an important tool to represent, analyze and process high-dimensional data. By generalizing vectors and matrices, tensors can be used to represent a variety of data sets in a versatile and efficient way. Besides the traditional low-dimensional signal processing problems such as image restoration, tensor modeling is able to improve complex data analysis and processing by exploiting the hidden relationships among data components.
Recently, tensor recovery has arisen in many application areas, such as transportation systems \cite{tan2013tensor}, medical imaging \cite{zhou2013tensor} and hyperspectral image restoration \cite{fan2017hyperspectral}. The goal is to reconstruct a tensor-valued signal from its measurements with multiple channels which may be degraded by noise, blur and so on. For example, many image restoration problems, e.g., image deblurring, can be cast as a tensor recovery problem by treating an image as a special type of tensor \cite{kilmer2013third}. Moreover, tensor modeling typically imposes the assumption of sparsity and low-rank structures on the underlying tensor signal to be reconstructed, which brings the presence of sparsity-promoted regularizations in the objective function. In this work, we focus on third-order tensor recovery models given consistent linear measurements.

% overview of Kaczmarz algorithm
Tensor recovery problems usually involve massive data sets which result in large-scale systems of linear equations. As one of the most important iterative algorithm for solving linear systems, the Kaczmarz algorithm was first proposed by Stephen Kaczmarz \cite{karczmarz1937angenaherte} and then rediscovered as the Algebraic Reconstruction Technique (ART) in computed tomography (CT) \cite{gordon1970algebraic}. Due to its simplicity and efficiency, it has been used and developed in many applications, including ultrasound imaging \cite{andersen1984simultaneous}, seismic imaging \cite{peterson1985applications}, positron emission tomography \cite{herman1993algebraic}, electrical impedance tomography \cite{li2013adaptive} and recently phase retrieval \cite{jeong2017convergence,tan2018phase}.

% theoretical developments of the Kaczmarz algorithm, connections with other methods
There is a large amount of work on the interpretations, developments and extensions of the Kaczmarz algorithm. For example, by treating each linear equation as a hyperplane of a high-dimensional space, it can be derived by applying the method of successive projections onto convex sets \cite{censor1997parallel}. With random selection of projections, randomized Kaczmarz algorithm for solving consistent over-determined linear systems with a unique solution is proposed in \cite{strohmer2009randomized}, which can be considered as a special case of stochastic gradient matching pursuit (StoGradMP) \cite{needell2016stochastic}. Convergence analysis of randomized Kaczmarz for noisy and random linear systems can be found in \cite{needell2010randomized,chen2012almost}. In addition, application of coordinate descent to the dual formulation of a linear equality constrained least-norm problem leads to the Kaczmarz algorithm \cite{wright2015coordinate}. Some other Kaczmarz types of methods include accelerated randomized Kaczmarz \cite{eldar2011acceleration}, asynchronous parallel randomized Kaczmarz \cite{liu2014asynchronous}, block Kaczmarz algorithms \cite{popa2004kaczmarz,needell2015randomized,durgin2019sparse}, Kaczmarz method for fusion frame recovery~\cite{CP16}, and greedy randomized Kaczmarz \cite{bai2018greedy}. Similar to the standard version, randomized sparse block Kaczmarz can also be obtained from randomized dual block-coordinate descent \cite{petra2015randomized}.

%[TODO: motivation of our work, contributions/novelty, special case in vector and matrix, special case of $f$, applications in imaging]
Motivated by the efficiency of the regularized Kaczmarz algorithm in solving linear systems, we intend to extend it from vectors to tensors.  In this work, we integrate the Kaczmarz algorithm into sparse/low-rank tensor recovery to significantly reduce the computational cost while preserving high accuracy. To simplify the discussion, we focus on third-order tensors which can be further extended to other higher-order tensors. Specifically, we assume that the acquired tensor measurements $\cB\in\mathbb{R}^{N_1\times K\times N_3}$ are related with the sensing tensor $\cA\in\mathbb{R}^{N_1\times N_2\times N_3}$ and the underlying signal in a tensor form $\cX\in\mathbb{R}^{N_2\times K\times N_3}$ via the tensor equation $g(\cX)=\cB$. To recover $\cX$, one can consider the following constrained minimization problem
\begin{equation}
\hat\cX=\argmin_{\cX\in\mathbb{R}^{N_2\times K\times N_3}} f(\cX), \quad\st\quad g(\cX)=\cB.
\end{equation}
Here the objective function $f$ is convex on the real tensor space $\mathbb{R}^{N_2\times K\times N_3}$, and $g$ is a map from $\mathbb{R}^{N_2\times K\times N_3}$ to $\mathbb{R}^{N_1\times K\times N_3}$. In this paper, we assume $g(\cX)=\cA*\cX$ where the symbol ``$*$'' denotes the t-product~\cite{KC11} (see Section \ref{sec:tensor}). To solve this linear constrained minimization problem, we propose a general regularized Kaczmarz tensor algorithm, which involves random projection onto the hyperplane and subgradient descent. The very recent work by Ma and Molitor~\cite{ma2020randomized} also uses the Kaczmarz algorithm, but it focuses on  solving the system $\cA*\cX=\cB$ without minimizing an objective function. We discuss convergence guarantees of the proposed algorithm with a deterministic or random control sequence. The proposed framework is also extended to solve the tensor nuclear norm minimization problem. %Moreover, we provide three special cases of the proposed framework. In particular,
We also discuss the important special cases when the third dimension of tensors is fixed as one
%, we discuss the convergence of the proposed algorithm adapted
for vector and matrix recovery problems.  Furthermore, numerical experiments in various applications, including sparse vector recovery, image inpainting, low-rank tensor recovery, and single/multiple image deblurring, demonstrate the great potential of the proposed algorithms in terms of computational efficiency. There are three major contributions for this work detailed as follows.
\begin{enumerate}%[leftmargin=12pt]
\item We propose a novel regularized Kaczmarz algorithmic framework for tensor recovery problems with thorough convergence analysis. Due to the difference in data structures, extension of a Kaczmarz type of algorithm to tensors is not trivial. A linear convergence rate is proven for the randomized version. Moreover, we also consider the noisy scenario with a slightly stronger assumption on the objective function. %We also provide new theories which could provide insights to other related works.
\item We provide three important cases of the proposed framework with convergence discussions. In particular, we propose a new algorithm for solving the tensor nuclear minimization problem with convergence guarantees, which is particularly useful in a lot of high-dimensional signal processing problems.
\item Numerical experiments on various signal/image recovery problems have justified the proposed performance, which can be further extended to solve other related application problems in various areas. In addition, the proposed algorithms are friendly to parameter tuning, where the step size can be fixed as one. Batched versions have empirically shown the capability of further improvements.
\end{enumerate}

The rest of the paper is organized as follows. In Section~\ref{sec:pre}, we introduce basic concepts and results in convex optimization, and tensors. Our main results are in Section~\ref{sec:tenrec}, where we propose a tensor recovery method based on Kaczmarz algorithm with the convergence guarantees. For the randomized version of this algorithm, we show linear convergence in expectation, even when in presence of noise. As special cases of tensor recovery, Section~\ref{sec:app} discusses how the proposed framework are applied to solve the vector and matrix recovery problem, and the tensor nuclear norm regularized tensor recovery model. Section \ref{sec:exp} lists various numerical experiments illustrating the efficiency of the proposed algorithms.

\section{Preliminaries}\label{sec:pre}
In this section, we provide clarification of notation and a brief review of fundamental concepts in convex optimization and tensor algebra.

Throughout the paper, we use boldface lowercase letters such as $\vx$ for vectors, capital letters such as $X$ for matrices, and calligraphic letters such as $\cX$ for tensors. The sets of all natural numbers and real numbers are denoted by $\N$ and $\R$, respectively. Given a real number $p\geq1$, the $\ell_p$-norm of a vector $\vx\in\R^N$ is defined as $\|\vx\|_p:=\left(\sum_{i=1}^N |x_i|^p\right)^{1/p}$. Analogous to the vector $\ell_2$-norm, the Frobenius norm of a matrix $X\in\R^{N_1\times N_2}$ is defined as $\|X\|_F=\sum_{i=1}^{N_1}\sum_{j=1}^{N_2}x_{ij}^2$. The trace of a square matrix $X$, denoted by $\tr(X)$, is the sum of all diagonal entries of $X$. Furthermore, the nuclear norm of a matrix $X$, denoted by $\|X\|_*$, is defined as the sum of all the singular values of $X$. For a complex-valued matrix $X$, $X^T$ is its transpose by interchanging the row and column index for each entry, and $X^*$ is its complex conjugate transpose, i.e., performing both transpose and componentwise complex conjugate. For any positive integer $k$, the set $\{1,2,\ldots,k\}$ is denoted by $[k]$. Given a finite set $I$, the cardinality of $I$ is denoted by $|I|$.

For a convex set $V$, $P_V$ denotes the orthogonal projection onto $V$. Given a matrix $A$, $R(A)$ is the row space of $A$, and $\sigma_{\min}(A)$, $\sigma_{\max}(A)$ are the smallest and largest nonzero singular values of $A$, respectively. One can show that
\begin{equation}\label{equ:A}
\sigma_{\min}(A)\|P_{R(A)}(\vx)\|_2\leq\|A\vx\|_2\leq\sigma_{\max}(A)\|P_{R(A)}(\vx)\|_2.
\end{equation}

The \emph{soft thresholding} operator (also known as \emph{shrinkage}) $S_\lambda(\cdot)$ is defined componentwise as
\begin{equation}\label{eqn:shrinkage}
(S_{\lambda}(\vx))_i=\sgn(x_i)\max\{|x_i|-\lambda,0\},
\end{equation}
where $\vx\in\mathbb{R}^N$ and $\sgn(\cdot)$ is the signum function which returns the sign of a nonzero number and zero otherwise. This operator can also be extended for matrices and tensors.

\subsection{Convex Optimization Basics}\label{sec:conv1}
To make the paper self-contained, we present basic definitions and properties about convex functions defined on real vector spaces. By reshaping matrices or tensors as vectors, these concepts and results can be naturally extended to functions defined on real matrix or tensor spaces.

For a continuous function $f: \R^N\rightarrow\R$, its \emph{subdifferential} at $\vx\in\R^N$ is defined as
\[
\partial f(\vx)=\{\vx^*: f(\vy)\geq f(\vx)+\langle \vx^*, \vy-\vx\rangle\mbox{ for any }\vy\in\R^N\}.
\]
It can be shown that $\partial f(\vx)$ is closed and convex in $\R^N$. If, in addition, $f$ is convex, then $\partial f(\vx)$ is nonempty for any $\vx\in\R^N$. In addition, a convex function is called \emph{proper} if its epigraph $\{(\vx,\mu)\,:\,\vx\in\R^N,\,\mu\geq f(\vx)\}$ is nonempty in $\R^{N+1}$.

Furthermore, $f$ is \emph{$\alpha$-strongly convex} for some $\alpha>0$ if for any $\vx,\vy\in\R^N$ and $\vx^*\in\partial f(\vx)$ we have
\begin{equation}\label{equ:alpha}
f(\vy)\geq f(\vx)+\langle \vx^*, \vy-\vx\rangle+\frac{\alpha}{2}\|\vy-\vx\|_2^2.
\end{equation}

\begin{example}
Let $f_1(\vx)=\frac{1}{2}\|\vx\|_2^2$, which is differentiable. In this case, the subdifferential becomes a singleton only consisting of the gradient, i.e., $\partial f_1(\vx)=\{\nabla f_1(\vx)\}=\{\vx\}$, and one can show that $f_1$ is 1-strongly convex.
\end{example}

Moreover, it is easy to show that $h(\vx)+\frac{1}{2}\|\vx\|_2^2$ is 1-strongly convex if $h$ is convex. In what follows, we provide two such examples. For more details and examples, please refer to \cite{bauschke2011convex}.

\begin{example}
Given a positive number $\lambda$, $f_\lambda(\vx)=\lambda\|\vx\|_1+\frac{1}{2}\|\vx\|_2^2$ is 1-strongly convex.
\end{example}

\begin{example}
Let $U$ be a real matrix, $\lambda>0$, then  $f_U(\vx)=\lambda\|U\vx\|_1+\frac{1}{2}\|\vx\|_2^2$ is 1-strongly convex.
\end{example}

The \emph{convex conjugate function} of $f$ at $\vz\in\R^N$ is defined as
\[
f^*(\vz):=\sup_{\vx\in\R^N}\{\langle \vz,\vx\rangle-f(\vx)\}.
\]
It can be shown that $\vz\in\partial f(\vx)$ if and only if $\vx=\nabla f^*(\vz)$ \cite{rockafellar1970convex}.

Following the ideas in \cite{lorenz2014linearized}, we can verify that if $f$ is $\alpha$-strongly convex then the conjugate function $f^*$ is differentiable and for any $\vz,\vw\in\R^N$, the following inequalities hold
\begin{align}
\label{equ:pf*}\|\nabla f^*(\vz)- \nabla f^*(\vw)\|_2\leq\frac{1}{\alpha}\|\vz-\vw\|_2,\\
\label{equ:f*}f^*(\vy)\leq f^*(\vx)+\langle \nabla f^*(\vx), \vy-\vx\rangle +\frac{1}{2\alpha}\|\vy-\vx\|_2^2.
\end{align}

For a convex function $f:\R^N\to\R$, the \emph{Bregman distance} between $\vx$ and $\vy$ with respect to $f$ and $\vx^*\in\partial f(\vx)$ is defined as
\begin{equation}\label{eqn:BregDist}
D_{f,\vx^*}(\vx,\vy):=f(\vy)-f(\vx)-\langle \vx^*, \vy-\vx\rangle.
\end{equation}
Since $\langle \vx, \vx^*\rangle=f(\vx)+f^*(\vx^*)$ if $x^*\in\partial f(\vx)$ \cite{rockafellar1970convex}, the Bregman distance can also be written as
\begin{equation}
\label{equ:Df2}
D_{f,\vx^*}(\vx,\vy)=f(\vy)+f^*(\vx^*)-\langle \vx^*, \vy\rangle.
\end{equation}

Note that for $f_1(\vx)=\frac{1}{2}\|\vx\|_2^2$, we have $D_{f_1, \vx^*}(\vx,\vy)=\frac{1}{2}\|\vx-\vy\|_2^2$. In general, the Bregman distance satisfies the property
\begin{equation}\label{equ:Df}
 \frac{\alpha_f}{2}\|\vx-\vy\|_2^2\leq D_{f,\vx^*}(\vx,\vy)\leq \langle \vx^*-\vy^*, \vx-\vy\rangle.
\end{equation}

The \emph{proximal operator} of $f$ is defined as
\begin{equation}\label{eqn:prox}
\prox_f(\vv)=\argmin_\vx \{f(\vx)+\frac{1}{2}\|\vx-\vv\|_2^2\}.
\end{equation}
Due to the Fenchel's duality, we have
\[
\prox_{h}(\vv)=\nabla f^*(\vv),\quad\mbox{where}\quad f(\vx)=h(\vx)+\frac{1}{2}\norm{\vx}_2^2.
\]
It can be shown that the soft thresholding operator is in fact the proximal operator of the $\ell_1$-norm, i.e., $S_\lambda(\vx)=\prox_{\lambda\norm{\cdot}_1}(\vx)$.

We provide an important lemma related to the Bregman distance, which will be used in proving Theorem \ref{thm:tensornoise}.
\begin{lemma}\label{lem:dd}
If $f:\R^N\to\R$ is convex, continuous and proper, then
\[
D_{f,\vx^*}(\vx,\vy)-D_{f,\vw^*}(\vw,\vy)\leq\langle \vx^*-\vw^*, \vx-\vy\rangle.
\]
for any $\vx^*\in\partial f(\vx)$ and $\vw^*\in\partial f(\vw)$.
\end{lemma}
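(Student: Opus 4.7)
The plan is to prove the inequality by direct expansion of the two Bregman distances using the definition in \eqref{eqn:BregDist}, and to reduce the desired bound to a single application of the subgradient inequality.

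First I would write out
\[
D_{f,\vx^*}(\vx,\vy)-D_{f,\vw^*}(\vw,\vy) = f(\vw)-f(\vx) - \langle \vx^*,\vy-\vx\rangle + \langle \vw^*,\vy-\vw\rangle,
\]
since the $f(\vy)$ terms cancel. Next I would expand the target upper bound $\langle \vx^*-\vw^*,\vx-\vy\rangle = -\langle \vx^*,\vy-\vx\rangle + \langle \vw^*,\vy-\vx\rangle$. Subtracting the left-hand side from the right-hand side, the terms involving $\vx^*$ cancel, so the inequality to be established collapses to
\[
f(\vw) - f(\vx) \leq \langle \vw^*, \vw-\vx\rangle,
\]
equivalently $f(\vx)\geq f(\vw)+\langle \vw^*,\vx-\vw\rangle$.

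This is precisely the defining subgradient inequality for $\vw^*\in\partial f(\vw)$, which is available because $f$ is convex, continuous and proper (ensuring $\partial f(\vw)$ is nonempty and behaves as in the definition from Section~\ref{sec:conv1}). So the argument finishes immediately after the cancellation.

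There is no real obstacle here; the only thing to be careful about is the bookkeeping of signs when expanding the inner products $\langle \vx^*,\vy-\vx\rangle$ versus $\langle \vx^*,\vx-\vy\rangle$, since a sign slip would appear to give a stronger but false bound. I would also note that the hypotheses ``convex, continuous, proper'' are used only to guarantee that the subgradients $\vx^*,\vw^*$ genuinely satisfy the subgradient inequality, so the statement does not require strong convexity or differentiability and the proof applies to all the examples introduced in Section~\ref{sec:conv1}.
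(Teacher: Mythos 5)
Your proposal is correct and follows essentially the same route as the paper: expand both Bregman distances so the $f(\vy)$ terms cancel, then observe that the remaining inequality is exactly the subgradient inequality $f(\vx)\geq f(\vw)+\langle \vw^*,\vx-\vw\rangle$ for $\vw^*\in\partial f(\vw)$. The only cosmetic difference is that you subtract the target bound first and then invoke the subgradient inequality, whereas the paper applies it directly inside the chain of equalities; the content is identical.
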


\begin{proof}
\begin{align*}
&D_{f,\vx^*}(\vx,\vy)-D_{f,\vw^*}(\vw,\vy)\\
=&f(\vy)-f(\vx)-\langle \vx^*,\vy-\vx\rangle-f(\vy)+f(\vw)+\langle \vw^*,\vy-\vw\rangle\\
=&f(\vw)-f(\vx)-\langle \vx^*,\vy-\vx\rangle+\langle \vw^*,\vy-\vw\rangle\\
\leq&\langle \vw^*, \vw-\vx\rangle-\langle \vx^*,\vy-\vx\rangle+\langle \vw^*,\vy-\vw\rangle\\
=&\langle \vw^*, \vy-\vx\rangle-\langle \vx^*,\vy-\vx\rangle=\langle \vx^*-\vw^*, \vx-\vy\rangle
\end{align*}
\end{proof}

%\subsection{Restricted Strong Convexity}\label{sec:conv2}
Next we will introduce the concept of restricted strong convexity. If $f:\R^N\to\R$ is convex, differentiable and proper, then \eqref{equ:alpha} in the definition of an $\alpha$-strongly convex function becomes
\begin{equation}\label{equ:alpha2}
\langle \nabla f(\vy)-\nabla f(\vx), \vy-\vx\rangle\geq\alpha\|\vy-\vx\|^2_2
\end{equation}
for any $\vx,\vy\in \R^N$. We define a weaker version as follows.

\begin{definition}\label{def:rsc}
Let $f:\R^N\to\R$ be convex differentiable with a nonempty minimizer set $X_f:=\{\vx:f(\vx)\leq f(\vy)\text{ for any }\vy\in\R^N\}$. The function $f$ is \textbf{restricted strongly convex} on a convex set $C\subseteq\R^N$ with $\alpha>0$ if
\begin{equation}
\langle\nabla f(\vy)-\nabla f(\vx), \vy-\vx\rangle\geq\alpha\|\vy-\vx\|^2_2
\end{equation}
for any $\vx\in C$ and $\vy=P_{X_f\cap C}(\vx)$.
\end{definition}

This definition can be found in \cite{schopfer2016linear}. The concept of restricted strong convexity first appeared in \cite{LY13} where $C$ is $\R^N$. We include below a useful lemma about the restricted strong convexity, which will be used for proving Lemma~\ref{lem:tensoressential}.

\begin{lemma}[{{\cite[Lemma 2.2]{schopfer2016linear}}}]\label{lem:rsc}
If $f$ is restricted strongly convex on $C$ with the constant $\alpha$, then
\begin{equation}\label{eqn:lem_rsc}
f(\vx)-\min_{\vx}f(\vx)\leq\frac{1}{\alpha}\|\nabla f(\vx)\|_2^2,
\end{equation}
for any $\vx\in C$.
\end{lemma}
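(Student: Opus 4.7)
The plan is to combine restricted strong convexity, evaluated at the projection of $\vx$ onto the minimizer set, with plain first-order convexity. The key observation is that if we pick $\vy$ to be a minimizer, then $\nabla f(\vy)=0$, which collapses the two-point inequality in Definition~\ref{def:rsc} into a direct lower bound on $\langle\nabla f(\vx),\vx-\vy\rangle$.

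First I would fix an arbitrary $\vx\in C$ and set $\vy:=P_{X_f\cap C}(\vx)$, the object explicitly appearing in the definition of restricted strong convexity. Since $\vy\in X_f$ is a global minimizer of the differentiable convex function $f$, the first-order optimality condition gives $\nabla f(\vy)=\mathbf{0}$. Plugging this into \eqref{equ:alpha2} applied in its restricted form yields
\[
\langle \nabla f(\vx),\vx-\vy\rangle\;\geq\;\alpha\|\vx-\vy\|_2^{2}.
\]

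Next, the Cauchy--Schwarz inequality on the left-hand side produces $\|\nabla f(\vx)\|_2\,\|\vx-\vy\|_2\geq\alpha\|\vx-\vy\|_2^{2}$, which (either trivially when $\vx=\vy$, or upon dividing) gives the error bound
\[
\|\vx-\vy\|_2\;\leq\;\tfrac{1}{\alpha}\,\|\nabla f(\vx)\|_2.
\]
Finally, I would apply the first-order characterization of convexity at $\vx$, rearranged as $f(\vx)-f(\vy)\leq\langle\nabla f(\vx),\vx-\vy\rangle$, then bound the right-hand side via Cauchy--Schwarz and substitute the error bound just derived:
\[
f(\vx)-\min_{\vz} f(\vz)\;=\;f(\vx)-f(\vy)\;\leq\;\|\nabla f(\vx)\|_2\,\|\vx-\vy\|_2\;\leq\;\tfrac{1}{\alpha}\,\|\nabla f(\vx)\|_2^{2},
\]
which is exactly \eqref{eqn:lem_rsc}.

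I do not expect any genuine obstacle here. The only subtle point is that the argument needs $\vy$ to lie in $X_f$ (not merely in $C$), so that $\nabla f(\vy)=0$; this is guaranteed because Definition~\ref{def:rsc} projects onto $X_f\cap C$ rather than onto $C$ alone, and $X_f\cap C$ is assumed nonempty and closed so that the projection is well defined. The bound $\tfrac{1}{\alpha}$ (rather than the sharper $\tfrac{1}{2\alpha}$ one would get from full strong convexity) is the correct loss incurred by using only the inner-product form of the inequality and a first-order expansion of $f$.
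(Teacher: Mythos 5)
Your proof is correct. The paper itself does not prove this lemma --- it is quoted verbatim from \cite[Lemma 2.2]{schopfer2016linear} --- but your argument is the standard one behind that result: take $\vy=P_{X_f\cap C}(\vx)$, use $\nabla f(\vy)=\mathbf{0}$ to reduce the restricted strong convexity inequality to $\langle\nabla f(\vx),\vx-\vy\rangle\geq\alpha\|\vx-\vy\|_2^2$, apply Cauchy--Schwarz to get $\|\vx-\vy\|_2\leq\frac{1}{\alpha}\|\nabla f(\vx)\|_2$, and finish with the first-order convexity inequality. You also correctly flag the one genuinely delicate point, namely that the projection must land in $X_f$ (so that both $\nabla f(\vy)=\mathbf{0}$ and $f(\vy)=\min f$ hold), which is exactly why Definition~\ref{def:rsc} projects onto $X_f\cap C$. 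Nothing is missing.
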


\subsection{Tensor Basics}\label{sec:tensor}
We follow the notation for tensor operators in \cite{kilmer2013third}.
If $\cA\in\R^{N_1\times N_2\times N_3}$ with the $k$th frontal slice $A_k=\cA(:,:,k)$, then we define the \emph{block circulant operator} as follows
\begin{equation}\label{eqn:bcirc}
\circc(\cA):=\begin{bmatrix}
A_1&A_{N_3}&\cdots&A_2\\
A_2&A_1&\cdots &A_3\\
\vdots&\vdots&\ddots&\vdots\\
A_{N_3}&A_{N_3-1}&\cdots &A_1\end{bmatrix}\in\R^{N_1N_3\times N_2N_3}.
\end{equation}
Moreover, we define the operator $\unfold(\cdot)$ and its inversion $\fold(\cdot)$ for the conversion between tensors and matrices
\[
\unfold(\cA)=\begin{bmatrix}
A_1\\
A_2\\
\vdots\\
A_{N_3}\end{bmatrix}\in\R^{N_1N_3\times N_2},\quad
\fold\left(\begin{bmatrix}
A_1\\
A_2\\
\vdots\\
A_{N_3}\end{bmatrix}\right)=\cA.
\]
For degenerate cases, we define the operator $\sqz(\cdot)$ that removes all the dimensions of length one from a tensor. For example, if $\cA\in\R^{3\times1\times 1}$, then $\sqz(\cA)$ returns a three-dimensional vector.
The transpose of $\cA$, denoted by $\cA^T$, is the $N_2\times N_1 \times N_3$ tensor obtained by transposing each of the frontal slices and then reversing the order of transposed frontal slices 2 through $N_3$. We have
$$
\circc(\cA^T)=\left(\circc(\cA)\right)^T.
$$
For the two tensors $\cA, \tilde\cA$ of the same size, we define the inner product and the Frobenius norm for tensors as
$$
\langle\cA, \tilde\cA\rangle:=\sum_{i,j,k}\cA(i,j,k)\tilde\cA(i,j,k),\qquad\|\cA\|_F^2=\langle\cA, \cA\rangle.
$$
One can see that $\langle \cA,\tilde{\cA}\rangle=\langle \unfold(\cA),\unfold(\tilde{\cA})\rangle$ where the right hand side is an inner product of two matrices. If $\cA\in\R^{N_1\times N_2\times N_3},  \cC\in\R^{N_2\times K\times N_3}$, then their \emph{t-product} is the $N_1\times K\times N_3$ tensor given by
$$
\cA*\cC:=\fold\left(\circc(\cA)\unfold(\cC)\right).
$$
We list three important properties about the t-product as follows:
\begin{enumerate}
\item Separability in the first dimension
\begin{equation}\label{eqn:tprod_p1}
(\cA*\cC)(i,:,:) = \cA(i,:,:)*\cC.%=\sum_{j=1}^{N_2}\cA(i,j,:)*\cC(j,:,:).
\end{equation}
\item Sum separability in the second dimension
\begin{equation}\label{eqn:tprod_p2}
\cA* \cC=\sum_{j=1}^{N_2}\cA(:,j,:)*\cC(j,:,:).
\end{equation}
\item Circular convolution in the third dimension: if $\cA,\cC\in\mathbb{R}^{1\times 1\times N_3}$, then
\begin{equation}\label{eqn:tprod_p3}
\sqz(\cA*\cC)=\mathrm{circ}(\va)\vc %=\va\otimes\vc
\end{equation}
where both $\va=\sqz(\cA)$ and $\vc=\sqz(\cC)$ are $N_3$-dimensional vectors. Here $\mathrm{circ}(\va)$ is the circular matrix generated by the vector $\va$, i.e., the reduced case of \eqref{eqn:bcirc} when $\cA\in\mathbb{R}^{1\times 1\times N_3}$.
\end{enumerate}
In Table~\ref{tab:tprod}, we summarize the t-products of tensors with reduced dimensions and their corresponding operators for vectors or matrices.
In particular, based on \eqref{eqn:tprod_p1} and \eqref{eqn:tprod_p3}, the t-product of $\cA\in\mathbb{R}^{1\times 1\times N_3}$ and $\cC\in\mathbb{R}^{1\times K\times N_3}$ can be implemented as
\begin{equation}\label{eqn:tprod_r1}
\sqz(\cA*\cC)=\sqz(\cC)\mathrm{circ}(\sqz(\cA))^T,
\end{equation}
where $\sqz(\cA)\in\mathbb{R}^{ N_3}$ and $\sqz(\cC)\in\mathbb{R}^{K\times N_3}$.
If $\cA\in\mathbb{R}^{N_1\times 1\times N_3}$ and $\cC\in\mathbb{R}^{1\times 1\times N_3}$, then
\begin{equation}\label{eqn:tprod_r2}
\sqz(\cA*\cC)=\sqz(\cA)\mathrm{circ}(\sqz(\cC))^T,
\end{equation}
where $\sqz(\cA)\in\mathbb{R}^{N_1\times N_3}$ and $\sqz(\cC)\in\mathbb{R}^{ N_3}$. Furthermore, if $\cA\in\mathbb{R}^{1\times N_2\times N_3}$ and $\cC\in\mathbb{R}^{N_2\times 1\times N_3}$, then the t-product becomes a sum of vector circular convolutions
\begin{equation}\label{eqn:tprod_r3}
\sqz(\cA*\cC)=\sum_{j=1}^{N_2}\mathrm{circ}(\sqz(\cA)(j,:))\sqz(\cC)(j,:)^T,
\end{equation}
where $\sqz(\cA)(j,:)\in\mathbb{R}^{N_3}$ is the $j$th row of the matrix $\sqz(\cA)$. These properties are particularly useful for representing the image blurring as a t-product; see Section~\ref{subsec:decov} for more details.

\begin{table}\label{tab:tprod}
\centering
\begin{tabular}{c|c|c|c}
\hline\hline
condition & $\cA$ & $\cC$& vector/matrix operator\\ \hline
$N_2=K=N_3=1$ & $\R^{N_1\times 1\times 1}$ & $\R^{1\times 1\times 1}$  & scalar multiplication of a vector\\
$N_1=N_2=N_3=1$ & $\R^{1\times 1\times 1}$ & $\R^{1\times K\times 1}$ & scalar multiplication of a vector\\ \hline
$N_1=K=N_3=1$ & $\mathbb{R}^{1\times N_2\times 1}$ & $\mathbb{R}^{N_2\times 1\times 1}$  & inner product of two vectors\\
$N_2=N_3=1$ & $\R^{N_1\times 1\times 1}$ & $\R^{1\times K\times 1}$ & outer product of two vectors\\
$N_1=N_3=1$ & $\R^{1\times N_2\times 1}$ & $\R^{N_2\times K\times 1}$ &  vector-matrix multiplication\\
$K=N_3=1$ & $\R^{N_1\times N_2\times 1}$ & $\R^{N_2\times 1\times 1}$ &  matrix-vector multiplication (Section \ref{sec:vec})\\
$N_3=1$ & $\R^{N_1\times N_2\times 1}$ & $\R^{N_2\times K\times 1}$ &  matrix-matrix multiplication (Section \ref{sec:matrix})\\ \hline
$N_1=N_2=K=1$ & $\R^{1\times 1\times N_3}$ & $\R^{1\times 1\times N_3}$ & vector circular convolution \eqref{eqn:tprod_p3}\\
$N_1=N_2=1$ & $\R^{1\times 1\times N_3}$ & $\R^{1\times K\times N_3}$ &  vector-matrix circular convolution \eqref{eqn:tprod_r1}\\
$N_2=K=1$ & $\R^{N_1\times 1\times N_3}$ & $\R^{1\times 1\times N_3}$ & matrix-vector circular convolution \eqref{eqn:tprod_r2}\\
$N_1=K=1$ & $\R^{1\times N_2\times N_3}$ & $\R^{N_2\times 1\times N_3}$ & sum of vector circular convolutions \eqref{eqn:tprod_r3}\\
\hline\hline
\end{tabular}
\vspace{0.1in}
\caption{T-Products of Dimension-Reduced Tensors.}
\end{table}

\begin{lemma}\label{lem:tensorinner}
Let $\cA\in\R^{N_1\times N_2\times N_3}, \cB\in\R^{N_1\times K\times N_3}, \cC\in\R^{N_2\times K\times N_3}$, then
\begin{equation}\label{eqn:teninner}
\langle \cA*\cC, \cB\rangle=\langle \cC, \cA^T*\cB\rangle.
\end{equation}

\begin{proof}
\begin{align*}
&\langle \cA*\cC, \cB\rangle=\langle \fold(\circc(\cA)\unfold(\cC)),\cB\rangle=\langle\circc(\cA)\unfold(\cC), \unfold(\cB)\rangle\\
&\quad =\tr\left(\big(\circc(\cA)\unfold(\cC)\big)^T\unfold(\cB)\right)
=\tr\left(\big(\unfold(\cC)\big)^T\circc(\cA^T)\unfold(\cB)\right)\\
&\quad=\langle\unfold(\cC), \circc(\cA^T)\unfold(\cB)\rangle\\
&\quad=\langle \cC,\cA^T*\cB\rangle.
\end{align*}
\end{proof}
\end{lemma}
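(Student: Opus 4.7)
The plan is to reduce the tensor identity to a matrix identity via the $\unfold$ and $\circc$ operators, then use the ordinary trace-inner-product identity $\langle MX, Y\rangle = \langle X, M^T Y\rangle$ together with the previously stated fact $\circc(\cA^T) = \circc(\cA)^T$.

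First I would unfold the tensor inner product: since $\langle \cA, \tilde{\cA}\rangle = \langle \unfold(\cA), \unfold(\tilde{\cA})\rangle$ (this is noted in Section~\ref{sec:tensor}), I can write
\[
\langle \cA*\cC, \cB\rangle = \langle \unfold(\cA*\cC), \unfold(\cB)\rangle = \langle \circc(\cA)\unfold(\cC), \unfold(\cB)\rangle,
\]
using the definition $\cA*\cC = \fold(\circc(\cA)\unfold(\cC))$ and the fact that $\unfold\circ\fold$ is the identity on appropriately shaped block column matrices.

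Next I would apply the standard matrix identity $\langle MX, Y\rangle = \tr((MX)^T Y) = \tr(X^T M^T Y) = \langle X, M^T Y\rangle$ with $M = \circc(\cA)$, $X = \unfold(\cC)$, $Y = \unfold(\cB)$, obtaining
\[
\langle \circc(\cA)\unfold(\cC), \unfold(\cB)\rangle = \langle \unfold(\cC), \circc(\cA)^T \unfold(\cB)\rangle.
\]
Then I would invoke the previously displayed identity $\circc(\cA^T) = \circc(\cA)^T$ to replace $\circc(\cA)^T$ with $\circc(\cA^T)$, and finally recognize $\circc(\cA^T)\unfold(\cB) = \unfold(\cA^T*\cB)$ by the definition of the t-product, so that
\[
\langle \unfold(\cC), \circc(\cA^T)\unfold(\cB)\rangle = \langle \unfold(\cC), \unfold(\cA^T*\cB)\rangle = \langle \cC, \cA^T*\cB\rangle,
\]
completing the chain.

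There is essentially no hard step here: every ingredient has been laid out in the preceding pages, so the argument is just a bookkeeping composition of the unfold--inner-product compatibility, the trace identity for matrices, and the transpose-of-$\circc$ identity. The only thing to be careful about is making sure the dimensions line up so that the matrix product $\circc(\cA)\unfold(\cC)$ is defined ($N_1 N_3 \times N_2 N_3$ times $N_2 N_3 \times K$) and that $\unfold(\cB)$ has the matching shape $N_1 N_3 \times K$ to be paired with it in the inner product, which is immediate from the stated dimensions of $\cA,\cB,\cC$.
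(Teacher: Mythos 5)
Your proposal is correct and follows essentially the same route as the paper's own proof: unfold the inner product, apply the matrix trace identity to move $\circc(\cA)$ across, use $\circc(\cA^T)=\circc(\cA)^T$, and fold back. The dimension check you add is a harmless extra; no differences worth noting.
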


\begin{lemma}\label{lem:N3}
If $\cA\in\R^{N_1\times N_2\times N_3}$ and $\cX\in\R^{N_2\times K\times N_3}$, then we have
\begin{equation}\label{eqn:lem28}
\|\cA*\cX\|_F\leq \sqrt{N_3}\|\cA\|_F\|\cX\|_F.
\end{equation}

\begin{proof}
$\|\cA*\cX\|_F^2=\|\circc(\cA)\unfold(\cX)\|_F^2\leq\|\circc(\cA)\|_F^2\|\cX\|_F^2=N_3\|\cA\|_F^2\|\cX\|_F^2$.
\end{proof}
\end{lemma}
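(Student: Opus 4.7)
The plan is to reduce everything to matrix Frobenius-norm computations via the \texttt{unfold}/\texttt{fold} and \texttt{bcirc} operators, since these convert the t-product into ordinary matrix multiplication. Concretely, by the definition $\cA*\cX = \fold(\circc(\cA)\,\unfold(\cX))$ and the fact that \texttt{fold} is just a rearrangement of entries, we have $\|\cA*\cX\|_F = \|\circc(\cA)\,\unfold(\cX)\|_F$. So the problem becomes bounding the Frobenius norm of a matrix product.

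Next, I would invoke submultiplicativity of the Frobenius norm, $\|MN\|_F \leq \|M\|_F\|N\|_F$, to get
\[
\|\circc(\cA)\,\unfold(\cX)\|_F \leq \|\circc(\cA)\|_F\,\|\unfold(\cX)\|_F.
\]
The factor $\|\unfold(\cX)\|_F$ equals $\|\cX\|_F$, again because \texttt{unfold} simply rearranges entries without duplicating or discarding them.

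The only remaining piece is $\|\circc(\cA)\|_F$. From the block structure in \eqref{eqn:bcirc}, each frontal slice $A_k = \cA(:,:,k)$ appears exactly $N_3$ times (once in each block row, corresponding to a cyclic shift of the columns). Hence
\[
\|\circc(\cA)\|_F^2 = N_3 \sum_{k=1}^{N_3} \|A_k\|_F^2 = N_3\,\|\cA\|_F^2,
\]
giving $\|\circc(\cA)\|_F = \sqrt{N_3}\,\|\cA\|_F$. Combining the three identities yields the claim.

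I do not expect any real obstacle here: the proof is essentially bookkeeping. The one place where a reader might pause is the factor $\sqrt{N_3}$, which is precisely the redundancy introduced by block-circulant embedding, and the only tool beyond definitions is the standard Frobenius submultiplicativity $\|MN\|_F \leq \|M\|_F\|N\|_F$.
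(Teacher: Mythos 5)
Your proof is correct and follows exactly the route the paper takes: rewrite the t-product as $\circc(\cA)\unfold(\cX)$, apply Frobenius submultiplicativity, and observe that each frontal slice appears $N_3$ times in the block circulant so that $\|\circc(\cA)\|_F^2 = N_3\|\cA\|_F^2$. The paper's one-line proof is just a compressed version of the same three steps you spell out.
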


Due to the circulant matrix multiplication involved in the t-product, the equation $\cA*\cX=\cB$ can be efficiently implemented in the Fourier domain.
\begin{definition}\label{def:F}
Given a tensor $\cA\in\R^{N_1\times N_2\times N_3}$, $F(\cA)$ is the $N_1\times N_2\times N_3$ tensor obtained by taking the 1-dimensional Fourier transform along each tube of $\cA$, i.e.,
$$F(\cA)(i,j,:)=\fft(\cA(i,j,:)),\quad\mbox{for } i\in [N_1], j\in[N_2].$$
\end{definition}

\begin{lemma}\label{lem:fourier}
If $\cA\in\R^{N_1\times N_2\times N_3}$, $\cX\in\R^{N_2\times K\times N_3}$ and $\cB\in\R^{N_1\times K\times N_3}$, then $\cA*\cX=\cB$ is equivalent to
\begin{equation}\label{equ:axbf}
\begin{bmatrix}
F(\cA)_1&&&\\
&F(\cA)_2&&\\
&&\ddots&\\
&&&F(\cA)_{N_3}\end{bmatrix}\begin{bmatrix}F(\cX)_1\\F(\cX)_2\\\vdots\\ F(\cX)_{N_3}\end{bmatrix}
=\begin{bmatrix}F(\cB)_1\\F(\cB)_2\\\vdots\\ F(\cB)_{N_3}\end{bmatrix}.
\end{equation}
\end{lemma}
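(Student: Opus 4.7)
I would reduce the lemma to the classical fact that a block circulant matrix is block-diagonalized by the block discrete Fourier transform. By definition of the t-product, $\cA*\cX=\cB$ is equivalent to
\[
\circc(\cA)\,\unfold(\cX)=\unfold(\cB),
\]
so it suffices to diagonalize $\circc(\cA)$.

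Let $F_{N_3}$ be the $N_3\times N_3$ (unnormalized) DFT matrix with entries $(F_{N_3})_{k\ell}=\omega^{(k-1)(\ell-1)}$, where $\omega=e^{-2\pi i/N_3}$, and let $I_m$ denote the $m\times m$ identity. The first step is to recall the standard block-diagonalization identity
\[
(F_{N_3}\otimes I_{N_1})\,\circc(\cA)\,(F_{N_3}^{-1}\otimes I_{N_2})=\diag\bigl(F(\cA)_1,\,F(\cA)_2,\,\ldots,\,F(\cA)_{N_3}\bigr),
\]
which follows because each ``block row'' of $\circc(\cA)$ is a cyclic shift of $(A_1,\ldots,A_{N_3})$ and the $k$-th frontal slice of $F(\cA)$ equals $\sum_{\ell=1}^{N_3}\omega^{(k-1)(\ell-1)}A_\ell$ by Definition~\ref{def:F}.

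The second step is to observe that left-multiplication by $F_{N_3}\otimes I_{m}$ converts the unfolding of a tensor into the stacked frontal slices of its Fourier transform, namely
\[
(F_{N_3}\otimes I_{N_2})\,\unfold(\cX)=\begin{bmatrix}F(\cX)_1\\ \vdots\\ F(\cX)_{N_3}\end{bmatrix},\qquad
(F_{N_3}\otimes I_{N_1})\,\unfold(\cB)=\begin{bmatrix}F(\cB)_1\\ \vdots\\ F(\cB)_{N_3}\end{bmatrix},
\]
by the same componentwise identity used to establish the block-diagonalization. I would then multiply $\circc(\cA)\,\unfold(\cX)=\unfold(\cB)$ on the left by $F_{N_3}\otimes I_{N_1}$, insert $(F_{N_3}^{-1}\otimes I_{N_2})(F_{N_3}\otimes I_{N_2})=I$ between $\circc(\cA)$ and $\unfold(\cX)$, and apply the two identities above. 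Since $F_{N_3}\otimes I_{N_1}$ is invertible, this manipulation is reversible, producing exactly the equivalence claimed in~\eqref{equ:axbf}.

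\textbf{Main obstacle.} The only non-routine ingredient is the block-diagonalization identity for $\circc(\cA)$; everything else is linear-algebraic bookkeeping and a direct matching of indices. I would either cite this well-known fact about block-circulants or, if a self-contained derivation is desired, verify it by computing the $(k,k')$ block of the triple product entrywise and using $\sum_{\ell}\omega^{(k-k')(\ell-1)}=N_3\delta_{k,k'}$ to collapse the off-diagonal blocks.
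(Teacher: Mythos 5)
Your proposal is correct and follows essentially the same route as the paper: reduce to $\circc(\cA)\unfold(\cX)=\unfold(\cB)$, left-multiply by $F_{N_3}\otimes I_{N_1}$, insert the inverse transform, and invoke the block-diagonalization of the block circulant matrix by the DFT. Your version is if anything slightly more careful, since you write $F_{N_3}^{-1}$ where the paper writes $F_{N_3}^{*}$ (which only coincide for the unitary normalization of the DFT) and you explicitly justify why $(F_{N_3}\otimes I_m)\unfold(\cdot)$ produces the stacked frontal slices of the Fourier-transformed tensor.
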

\begin{proof}
Note that block circulant matrices can be block diagonalized by the Fourier transform. Let $F_{K}$ be the $K\times K$ discrete Fourier transformation matrix, $I_{K}$ the $K\times K$ identity matrix, and $\otimes$ the Kronecker product. Then $\cA*\cX=\cB$ can be converted to
\begin{equation}\label{equ:axbm}
\circc(\cA)\unfold(\cX)=\unfold(\cB).
\end{equation}
By multiplying both sides of \eqref{equ:axbm} on the left with $F_{N_3}\otimes I_{N_1}$, we obtain
\[
\left(F_{N_3}\otimes I_{N_1}\right)\circc(\cA)\left(F_{N_3}^*\otimes I_{N_2}\right)\left(F_{N_3}\otimes I_{N_2}\right) \unfold(\cX)=\left(F_{N_3}\otimes I_{N_1}\right)\unfold(\cB),
\]
which is the same as \eqref{equ:axbf}, considering the fact that $\left(F_{N_3}\otimes I_{N_1}\right)\circc(\cA)\left(F_{N_3}^*\otimes I_{N_2}\right)$ is the block diagonal matrix where each block is the frontal slice of $F(\cA)$.
\end{proof}

Similarly, since $\left(F_{N_3}\otimes I_{N_2}\right)[\circc(\cA)]^*\left(F_{N_3}^*\otimes I_{N_1}\right)$ is a block diagonal matrix where the $j$th block is $[F(\cA)_j]^*$, we have the following lemma.

\begin{lemma}\label{lem:alg1}
\begin{equation}
\left(F_{N_3}\otimes I_{N_2}\right)[\circc(\cA)]^*\unfold(\cX)=
\begin{bmatrix}
F(\cA)_1^*&&&\\
&F(\cA)_2^*&&\\
&&\ddots&\\
&&&F(\cA)_{N_3}^*\end{bmatrix}\begin{bmatrix}F(\cX)_1\\F(\cX)_2\\\vdots\\ F(\cX)_{N_3}\end{bmatrix}.
\end{equation}
\end{lemma}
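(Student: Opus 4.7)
The plan is to repeat the argument used for Lemma~\ref{lem:fourier}, but applied to $[\circc(\cA)]^*$ in place of $\circc(\cA)$. The sentence immediately preceding the statement records the key fact: the Fourier-conjugated matrix $(F_{N_3}\otimes I_{N_2})[\circc(\cA)]^*(F_{N_3}^*\otimes I_{N_1})$ is block diagonal with blocks $[F(\cA)_j]^*$. This is a direct consequence of the block diagonalization established in the proof of Lemma~\ref{lem:fourier}: taking the Hermitian transpose of that identity, and noting that $(F_{N_3}\otimes I_{N_\ell})^*=F_{N_3}^*\otimes I_{N_\ell}$, transports the adjoint inside the outer Fourier factors and replaces each diagonal block by its conjugate transpose.

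Given that block diagonalization, I would multiply both sides of
$$
(F_{N_3}\otimes I_{N_2})[\circc(\cA)]^*(F_{N_3}^*\otimes I_{N_1}) = \diag\bigl([F(\cA)_1]^*,\ldots,[F(\cA)_{N_3}]^*\bigr)
$$
on the right by $(F_{N_3}\otimes I_{N_1})\unfold(\cX)$. Because $F_{N_3}$ is unitary (under the normalization implicitly used in Lemma~\ref{lem:fourier}), the factor $(F_{N_3}^*\otimes I_{N_1})(F_{N_3}\otimes I_{N_1})$ collapses to the identity, producing $(F_{N_3}\otimes I_{N_2})[\circc(\cA)]^*\unfold(\cX)$ on the left --- exactly the left-hand side of the lemma.

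It then remains to identify the right-hand factor $(F_{N_3}\otimes I_{N_1})\unfold(\cX)$ with the stack $[F(\cX)_1;\ldots;F(\cX)_{N_3}]$. This is precisely the block-matrix incarnation of applying the one-dimensional DFT along each tube of $\cX$, which is Definition~\ref{def:F} (and is the same identification already used in the proof of Lemma~\ref{lem:fourier} for $\cB$). I do not anticipate any real obstacle; the entire argument is a transpose-and-multiply manipulation of the identity already established in Lemma~\ref{lem:fourier}. The only care needed is dimensional --- here $\cX$ must have first-mode size $N_1$ so that $\unfold(\cX)\in\R^{N_1N_3\times \cdot}$ is conformable with $[\circc(\cA)]^*\in\R^{N_2N_3\times N_1N_3}$, whereas in Lemma~\ref{lem:fourier} the corresponding first-mode size was $N_2$.
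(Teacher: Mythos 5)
Your proposal is correct and follows essentially the same route as the paper, which proves this lemma only by the one-sentence remark preceding it: take the Hermitian transpose of the block diagonalization $(F_{N_3}\otimes I_{N_1})\circc(\cA)(F_{N_3}^*\otimes I_{N_2})=\diag(F(\cA)_1,\ldots,F(\cA)_{N_3})$ established in Lemma~\ref{lem:fourier} and multiply by the Fourier-transformed unfolding of $\cX$. Your added care about the normalization of $F_{N_3}$ and the first-mode dimension of $\cX$ is consistent with (and slightly more explicit than) the paper's treatment.
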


\section{Tensor Recovery}\label{sec:tenrec}
Let $f$ be an $\alpha_f$-strongly convex function defined on $\R^{N_2\times K\times N_3}$, $\cA\in\R^{N_1\times N_2\times N_3}$ and $\cB\in\R^{N_1\times K\times N_3}$. We assume the linear system $\cA*\cX=\cB$ is consistent and underdetermined. Consider a tensor recovery problem of the following form
\begin{equation}\label{equ:tensormin}
\hat\cX=\argmin_{\cX\in\R^{N_2\times K\times N_3}} f(\cX), \quad\st\quad \cA*\cX=\cB.
\end{equation}
According to the separability of t-product in the first dimension \eqref{eqn:tprod_p1}, the linear constraint $\cA*\cX=\cB$ can be split into $N_1$ reduced ones
\begin{equation}\label{equ:splitcon}
\cA(i,:,:)*\cX=\cB(i,:,:),\quad i\in[N_1].
\end{equation}
For the notational convenience, the $i$th horizontal slice $\cA(i,:,:)$ of $\cA$ will be denoted as $\cA(i)$. One can see that
\begin{equation}\label{equ:ai}
\sum_{i=1}^{N_1}\|\cA(i)*\cX\|_F^2=\|\cA*\cX\|_F^2.
\end{equation}
We further let $H_i:=\{\cX:\cA(i)*\cX=\cB(i)\}$, and $H=\bigcap_{i=1}^{N_1}H_i$ be the feasible set of \eqref{equ:tensormin}. The ``row space'' of $\cA$ is defined as
\[
R(\cA):=\{\cA^T*\cY:\cY\in\R^{N_1\times K\times N_3}\}.
\]
If $N_3=1$, then $R(\cA)$ coincides with the row space of the $N_1\times N_2$ matrix $\cA$.

The optimal solution $\hat\cX$ of \eqref{equ:tensormin} satisfies the following optimality conditions
\begin{equation}\label{equ:tensorminopt}
\cA*\hat\cX = \cB, \quad\partial f(\hat\cX)\cap R(\cA)\neq\emptyset. %\cA^T*\hat\cY=0, \text{ where $\hat\cY$ is the dual variable.}
\end{equation}

We propose Algorithm \ref{alg:tensor} that only uses one of the horizontal slice of $\cA$ at each iteration. In order to make sure all horizontal slices are used, we define a control sequence for slice selection at each iteration.
\begin{definition}
A sequence $i:\mathbb{N}\rightarrow[M]$ is called a \textbf{control sequence} for $[M]$ if for any $m\in[M]$, there are infinitely many $k$'s such that $i(k)=m$.
\end{definition}
Control sequences can be defined in a deterministic fashion. For example, a cyclic sequence $\{1,\cdots,N_1$, $1, \cdots, N_1, \cdots\}$ is a control sequence for $[N_1]$.

\begin{algorithm}[htb]
\caption{Regularized Kaczmarz Algorithm for Tensor Recovery}\label{alg:tensor}
\begin{algorithmic}
\State\textbf{Input:} $\cA\in\R^{N_1\times N_2\times N_3}$, $\cB\in\R^{N_1\times K\times N_3}$, control sequence $\{i(k)\}_{k=1}^\infty\subseteq[N_1]$, stepsize $t$, maximum number of iterations $T$, and the tolerance $tol$.
\State\textbf{Output:} an approximate of $\hat \cX$
\State\textbf{Initialize:} $\cZ^{(0)}\in R(\cA)\subset\mathbb{R}^{N_2\times K\times N_3}, \cX^{(0)}=\nabla f^*(\cZ^{(0)})$.
\For{$k=0,1,\ldots,T-1$}% {termination criteria not satisfied}
\State $\cZ^{(k+1)}=\cZ^{(k)}+t\cA(i(k))^T*\frac{\cB(i(k))-\cA(i(k))* \cX^{(k)}}{\norm{\cA(i(k))}_F^2}$
\State $\cX^{(k+1)}=\nabla f^*(\cZ^{(k+1)})$
\State If $\norm{\cX^{(k+1)}-\cX^{(k)}}_F/\norm{\cX^{(k)}}_F<tol$, then it stops.
\EndFor
\end{algorithmic}
\end{algorithm}

In Algorithm \ref{alg:tensor}, it can be shown that $\cZ^{(k)}\in\partial f(\cX^{(k)})\cap R(\cA)$ which will be used in our convergence analysis. Regarding the control sequence, one common choice is a cyclic sequence, which is cycling sequentially through the constraints \eqref{equ:splitcon}. Algorithm \ref{alg:tensor} can be viewed as a deterministic algorithm, whereas Algorithm \ref{alg:tensorrand} picks the slices in a random fashion and has gained much attention in recent years~\cite{strohmer2009randomized, needell2014paved, zouzias2013randomized, ma2015convergence}. At each iteration, the probability of picking the $j$th constraint/slice \eqref{equ:splitcon} is proportional to $\|\cA(j)\|_F^2$, which is commonly used in literature \cite{strohmer2009randomized}. Nevertheless, other probability distributions are also allowed; see Corollary \ref{cor:matrix} and Remark \ref{rem:prob}. More discussion on picking appropriate probability distributions can be found in \cite{Dai13, AWL14, CP16}.

\begin{algorithm}[htb]
\caption{Randomized Regularized Kaczmarz Algorithm for Tensor Recovery}\label{alg:tensorrand}
\begin{algorithmic}
\State\textbf{Input:} $\cA\in\R^{N_1\times N_2\times N_3}$, $\cB\in\R^{N_1\times K\times N_3}$, stepsize $t$, maximum number of iterations $T$, and the tolerance $tol$.
\State\textbf{Output:} an approximate of $\hat \cX$
\State\textbf{Initialize:} $\cZ^{(0)}\in R(\cA)\subset\mathbb{R}^{N_2\times K\times N_3}, \cX^{(0)}=\nabla f^*(\cZ^{(0)})$.
\For{$k=0,1,\ldots,T-1$} %{termination criteria not satisfied}
\State pick $i(k)$ randomly from $[N_1]$ with $\Pr(i(k)=j)=\|\cA(j)\|_F^2/\|\cA\|_F^2$,
\State $\cZ^{(k+1)}=\cZ^{(k)}+t\cA(i(k))^T*\frac{\cB(i(k))-\cA(i(k))* \cX^{(k)}}{\norm{\cA(i(k))}_F^2}$
\State $\cX^{(k+1)}=\nabla f^*(\cZ^{(k+1)})$
\State If $\norm{\cX^{(k+1)}-\cX^{(k)}}_F/\norm{\cX^{(k)}}_F<tol$, then it stops.
\EndFor
\end{algorithmic}
\end{algorithm}

\subsection{Convergence Analysis with a Control Sequence}
To analyze the convergence of the proposed Algorithms~\ref{alg:tensor} and \ref{alg:tensorrand}, we state the following proposition which plays an important role in the convergence analysis.

\begin{proposition}\label{thm:tensorD}
Suppose $\cA\in\R^{1\times N_2\times N_3}$, $\cB\in\R^{1\times K\times N_3}$, and $f$ is an $\alpha_f$-strongly convex function defined on $\R^{N_2\times K\times N_3}$. Given an arbitrary $\bar \cZ\in\R^{N_2\times K\times N_3}$ and $\bar \cX=\nabla f^*(\bar \cZ)$, let $\cZ=\bar \cZ+t\frac{\cA^T}{\|\cA\|_F^2}*(\cB- \cA*\bar\cX)$ and $\cX=\nabla f^*( \cZ)$. If $t<\frac{2\alpha_f}{N_3}$, then
\[
D_{f,\cZ}(\cX,  \cH)\leq D_{f,\bar \cZ}(\bar \cX,  \cH)- \frac{t}{\|\cA\|_F^2}\left(1-\frac{t N_3}{2\alpha_f}\right)\|\cB-\cA*\bar\cX\|_F^2,
\]
for any $\cH$ that satisfies $\cA*\cH=\cB$.
\end{proposition}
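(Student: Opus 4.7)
My plan is to reduce everything to the vector inequalities in Section 2.1 by viewing $\R^{N_2\times K\times N_3}$ as a Euclidean space under the Frobenius inner product, and then exploit the two tensor identities in Lemmas \ref{lem:tensorinner}--\ref{lem:N3}. The starting observation is that since $\cX=\nabla f^*(\cZ)$ and $\bar\cX=\nabla f^*(\bar\cZ)$, we have $\cZ\in\partial f(\cX)$ and $\bar\cZ\in\partial f(\bar\cX)$, so the alternate formula \eqref{equ:Df2} for the Bregman distance applies. Writing both Bregman distances via \eqref{equ:Df2}, the $f(\cH)$ terms cancel, leaving
\[
D_{f,\cZ}(\cX,\cH)-D_{f,\bar\cZ}(\bar\cX,\cH)=f^*(\cZ)-f^*(\bar\cZ)-\langle \cZ-\bar\cZ,\cH\rangle.
\]

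Next I would set $\Delta:=\cZ-\bar\cZ=\dfrac{t}{\|\cA\|_F^2}\,\cA^T*(\cB-\cA*\bar\cX)$ and apply the smoothness-type inequality \eqref{equ:f*} for $f^*$ (which holds in the tensor setting by identifying tensors with their vectorizations) to bound
\[
f^*(\cZ)-f^*(\bar\cZ)\leq \langle \nabla f^*(\bar\cZ),\Delta\rangle+\frac{1}{2\alpha_f}\|\Delta\|_F^2=\langle \bar\cX,\Delta\rangle+\frac{1}{2\alpha_f}\|\Delta\|_F^2.
\]
Combining with the previous display gives an upper bound of the form $\langle \bar\cX-\cH,\Delta\rangle+\frac{1}{2\alpha_f}\|\Delta\|_F^2$, and it remains to evaluate these two quantities.

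For the linear term, I would use the adjoint identity $\langle \cA*\cC,\cB\rangle=\langle \cC,\cA^T*\cB\rangle$ from Lemma \ref{lem:tensorinner} to move $\cA^T$ onto $\bar\cX-\cH$; since $\cA*\cH=\cB$, this collapses to $\langle \bar\cX-\cH,\Delta\rangle=-\tfrac{t}{\|\cA\|_F^2}\|\cB-\cA*\bar\cX\|_F^2$. For the quadratic term, I would use Lemma \ref{lem:N3} applied to $\cA^T$ (noting $\|\cA^T\|_F=\|\cA\|_F$) to obtain
\[
\|\Delta\|_F^2=\frac{t^2}{\|\cA\|_F^4}\,\|\cA^T*(\cB-\cA*\bar\cX)\|_F^2\leq \frac{t^2 N_3}{\|\cA\|_F^2}\|\cB-\cA*\bar\cX\|_F^2.
\]
Plugging both estimates in and collecting the coefficient of $\|\cB-\cA*\bar\cX\|_F^2$ yields the claimed inequality, and the restriction $t<2\alpha_f/N_3$ is exactly what is required to keep this coefficient nonpositive (and the estimate meaningful as a descent statement).

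The computation is mostly mechanical; the only place that demands care is the quadratic term, where the factor $N_3$ arises solely from Lemma \ref{lem:N3}. This is the main obstacle in the sense that a naive use of the vector Kaczmarz bound would give a coefficient without $N_3$; it is the block-circulant structure of $\circc(\cA)$, with $N_3$ copies of each frontal slice of $\cA$ along its block columns, that forces the extra $N_3$ and hence the step size restriction $t<2\alpha_f/N_3$ rather than the usual $t<2\alpha_f$. Once this is understood, the rest of the argument is just bookkeeping with \eqref{equ:Df2}, \eqref{equ:f*}, and Lemmas \ref{lem:tensorinner}--\ref{lem:N3}.
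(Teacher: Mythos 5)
Your proposal is correct and follows essentially the same route as the paper's proof: both write the Bregman distances via \eqref{equ:Df2}, apply the smoothness inequality \eqref{equ:f*} for $f^*$ to the increment $\Delta=\cZ-\bar\cZ$, evaluate the linear term with Lemma \ref{lem:tensorinner} using $\cA*\cH=\cB$, and bound the quadratic term with Lemma \ref{lem:N3}, which is exactly where the factor $N_3$ and the step-size restriction $t<2\alpha_f/N_3$ enter. No gaps; the argument matches the paper's.
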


\begin{proof} For simplicity of notation, let $\cZ=\bar\cZ+s\cW$, where $\cW= \cA^T*(\cB-\cA*\bar\cX)$. By Lemma \ref{lem:N3}, we have $\|\cW\|_F\leq\sqrt{N_3}\|\cA\|_F\|\cB-\cA*\bar\cX\|_F$. By Lemma \ref{lem:tensorinner}, we have
\[
\langle \cW, \cH-\bar\cX\rangle=\langle \cA^T*(\cA*\cH-\cA*\bar\cX), \cH-\bar\cX\rangle=\|\cA*(\cH-\bar\cX)\|_F^2=\|\cB-\cA*\bar\cX\|_F^2.
\]
By \eqref{equ:Df2}, the Bregman distance between $\cX$ and $\cH$ with respect to $f$ and $\cZ$ satisfies
\begin{align*}
D_{f,\cZ}(\cX,\cH)
&=f^*(\cZ)-\langle \cZ,\cH\rangle+f(\cH)\\
&=f^*(\bar\cZ+s \cW)-\langle \bar\cZ+s \cW,\cH\rangle+f(\cH)\\
&\leq f^*(\bar\cZ)+\langle\nabla f^*(\bar\cZ),s\cW\rangle+\frac{1}{2\alpha_f}\|s\cW\|_F^2-\langle \bar\cZ+s \cW,\cH\rangle+f(\cH)\\
&=D_{f,\bar\cZ}(\bar\cX,\cH)-\langle s\cW,\cH\rangle+\langle \bar\cX,s\cW\rangle+\frac{1}{2\alpha_f}\|s\cW\|_F^2\\
&=D_{f,\bar\cZ}(\bar\cX,\cH)-\langle s\cW,\cH-\bar\cX\rangle+\frac{1}{2\alpha_f}\|s\cW\|_F^2\\
&\leq D_{f,\bar\cZ}(\bar\cX,\cH)-s\|\cB-\cA*\bar\cX\|_F^2+\frac{s^2}{2\alpha_f}N_3\|\cA\|_F^2\|\cB-\cA*\bar\cX\|_F^2\\
&=D_{f,\bar\cZ}(\bar\cX,\cH)-s\|\cB-\cA*\bar\cX\|_F\left(1-\frac{sN_3\|\cA\|_F^2}{2\alpha_f}\right).
\end{align*}
The desired result is obtained by setting $s=\frac{t}{\|\cA\|_F^2}$.
\end{proof}

This proposition essentially shows how much the Bregman distance decreases after one iteration. We are now ready to state our first main result.
\begin{theorem}\label{thm:tensorC}
Let $f$ be $\alpha_f$-strongly convex. The sequence generated by Algorithm \ref{alg:tensor} ($t<2\alpha_f/N_3$) satisfies
\begin{equation}\label{equ:tensorDxk}
D_{f,\cZ^{(k+1)}}(\cX^{(k+1)},  \cX)\leq D_{f,\cZ^{(k)}}(\cX^{(k)},  \cX)- t\left(1-\frac{t N_3}{2\alpha_f}\right)\frac{\|\cA(i(k))*(\cX^{(k)}-\cX)\|_F^2}{\|\cA(i(k))\|_F^2},
\end{equation}
for all $\cX\in H_{i(k)}$. Moreover, the sequence $\cX^{(k)}$ converges to the solution of \eqref{equ:tensormin}.
\end{theorem}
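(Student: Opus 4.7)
The plan is to reduce the per-iteration inequality \eqref{equ:tensorDxk} to Proposition~\ref{thm:tensorD}, and then to derive convergence of $\cX^{(k)}$ from Fej\'er-type monotonicity of the Bregman distance to $\hat\cX$ together with a cluster-point analysis exploiting the control-sequence property.

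First, for the inequality, apply Proposition~\ref{thm:tensorD} at step $k$ with $\bar\cZ = \cZ^{(k)}$, $\bar\cX = \cX^{(k)}$, and with the horizontal slice $\cA(i(k))\in\R^{1\times N_2\times N_3}$, $\cB(i(k))\in\R^{1\times K\times N_3}$ playing the roles of $\cA$ and $\cB$. The $\cZ^{(k+1)}, \cX^{(k+1)}$ built by Algorithm~\ref{alg:tensor} match the $\cZ, \cX$ of the proposition exactly. For $\cX\in H_{i(k)}$ one has $\cA(i(k))*\cX=\cB(i(k))$, so $\|\cB(i(k))-\cA(i(k))*\cX^{(k)}\|_F=\|\cA(i(k))*(\cX^{(k)}-\cX)\|_F$, and the conclusion of Proposition~\ref{thm:tensorD} is exactly \eqref{equ:tensorDxk}.

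For convergence, specialize \eqref{equ:tensorDxk} to $\cX=\hat\cX$, which is feasible hence lies in every $H_i$. The hypothesis $t<2\alpha_f/N_3$ makes $t(1-tN_3/(2\alpha_f))$ strictly positive, so the Bregman distance $D_k:=D_{f,\cZ^{(k)}}(\cX^{(k)},\hat\cX)$ is monotone non-increasing; by \eqref{equ:Df} it is also non-negative, hence convergent. Telescoping yields
\[
\sum_{k=0}^{\infty}\frac{\|\cA(i(k))*(\cX^{(k)}-\hat\cX)\|_F^2}{\|\cA(i(k))\|_F^2}<\infty,
\]
so the residuals vanish along the iteration. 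From $\frac{\alpha_f}{2}\|\cX^{(k)}-\hat\cX\|_F^2\leq D_k$ the iterates $\cX^{(k)}$ are bounded, and since $f$ is finite-valued and strongly convex, $\partial f$ sends bounded sets to bounded sets, so $\cZ^{(k)}\in\partial f(\cX^{(k)})$ is bounded as well. Pass to a subsequence along which $\cZ^{(k_j)}\to\cZ^*$; by Lipschitz continuity of $\nabla f^*$ from \eqref{equ:pf*}, $\cX^{(k_j)}\to\cX^*:=\nabla f^*(\cZ^*)$, and $\cZ^*\in R(\cA)$ since $R(\cA)$ is closed.

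It remains to show $\cX^*=\hat\cX$. One first establishes that $\cX^*$ is feasible: for each $i\in[N_1]$ the control sequence places infinitely many indices into $\cK_i:=\{k:i(k)=i\}$ and the residuals decay along $\cK_i$. Combined with the update bound
\[
\|\cZ^{(k+1)}-\cZ^{(k)}\|_F\leq t\sqrt{N_3}\,\frac{\|\cA(i(k))*(\cX^{(k)}-\hat\cX)\|_F}{\|\cA(i(k))\|_F}\to 0
\]
(using Lemma~\ref{lem:N3}) and the Lipschitzness of $\nabla f^*$, one obtains $\|\cX^{(k+1)}-\cX^{(k)}\|_F\to 0$, which serves as the bridge: an Opial-style diagonal argument then forces every cluster point $\cX^*$ to satisfy $\cA(i)*\cX^*=\cB(i)$ for every $i$. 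Once feasibility holds, the pair $(\cX^*,\cZ^*)$ with $\cZ^*\in\partial f(\cX^*)\cap R(\cA)$ satisfies the optimality conditions \eqref{equ:tensorminopt}; strong convexity of $f$ makes the minimizer unique, so $\cX^*=\hat\cX$. Because every cluster point equals $\hat\cX$ and $D_k$ is Fej\'er monotone, the whole sequence $\cX^{(k)}$ converges to $\hat\cX$. The main obstacle is feasibility of cluster points: the control property only asserts infinitely many visits per index with no gap bound, so one cannot simply intersect the convergent subsequence $(k_j)$ with $\cK_i$; the $\|\cX^{(k+1)}-\cX^{(k)}\|_F\to 0$ estimate is what allows the cluster-point analysis to go through.
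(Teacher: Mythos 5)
Your reduction of \eqref{equ:tensorDxk} to Proposition~\ref{thm:tensorD} is exactly the paper's first step and is correct. The opening of your convergence argument is also sound: monotonicity and convergence of $D_k:=D_{f,\cZ^{(k)}}(\cX^{(k)},\hat\cX)$, vanishing of the per-step residuals, boundedness of $\cX^{(k)}$ and hence of $\cZ^{(k)}$ (you get the latter from local boundedness of $\partial f$ on bounded sets, where the paper instead uses coercivity of $f^*$; both work), and extraction of a cluster point $\cX^*=\nabla f^*(\cZ^*)$ with $\cZ^*\in R(\cA)$.

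The genuine gap is at the step you yourself flag as the main obstacle: feasibility of the cluster point. Asymptotic regularity $\|\cX^{(k+1)}-\cX^{(k)}\|_F\to 0$ does not bridge the gap between the convergent subsequence $(k_j)$ and the visit times $\cK_i=\{k:i(k)=i\}$. Writing $m_j:=\min\{k\geq k_j: i(k)=i\}$, the control property places no bound on $m_j-k_j$, and a sum of individually vanishing increments over blocks of unbounded length need not vanish; the telescoped inequality only gives square-summability of the increments, which does not control $\sum_{k=k_j}^{m_j-1}\|\cX^{(k+1)}-\cX^{(k)}\|_F$. What asymptotic regularity buys is connectedness of the cluster set and the fact that cluster points taken \emph{along} $\cK_i$ lie in $H_i$; it does not show that a cluster point reached along a subsequence disjoint from $\cK_i$ lies in $H_i$, so the ``Opial-style diagonal argument'' does not go through as stated. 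The paper closes this differently: it refines the convergent subsequence so that each block $\{i(k_l),\dots,i(k_{l+1}-1)\}$ exhausts $[N_1]$ with $i(k_l)\equiv j_0$, deduces $\tilde\cX\in H_{j_0}$ from the squeeze between $D_{k_l}$ and $D_{k_{l+1}}$, and then runs a contradiction argument on $J_{in}=\{j:\tilde\cX\in H_j\}$: over consecutive steps whose slices lie in $J_{in}$, inequality \eqref{equ:tensorDxk} applies with $\cX=\tilde\cX$ itself, so $D_{f,\cZ^{(k)}}(\cX^{(k)},\tilde\cX)$ does not increase from $k_l$ up to the first exit time $n_l$, forcing $\cX^{(n_l)}\to\tilde\cX$ and hence $\tilde\cX\in H_{i(n_l)}$, which enlarges $J_{in}$ until it equals $[N_1]$. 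You would need to replace your bridge with an argument of this kind, or restrict to control sequences with bounded gaps (e.g.\ cyclic), for which your increment estimate does suffice.
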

\begin{proof}

We apply Proposition \ref{thm:tensorD} where $\cA(i(k))$, $\cB(i(k))$, $\cZ^{(k)}$, $\cZ^{(k+1)}$ are replaced by $\cA$, $\cB$, $\bar\cZ$, $\cZ$, respectively, and then obtain \eqref{equ:tensorDxk}. The rest of the proof is for the convergence of the sequence $\{\cX^{(k)}\}$.

It is known that $\lim\limits_{\|\cZ\|_F\rightarrow\infty}\frac{f^*(\cZ)}{\|\cZ\|_F}=\infty$ (see \cite[Equation (5)]{lorenz2014linearized}). Then for any $\cZ\in\partial f(\cX)$ and arbitrary $\cY$, we have
\[
\lim_{\|\cZ\|_F\rightarrow\infty}\frac{D_{f,\cZ}(\cX,\cY)}{\|\cZ\|_F}=\lim_{\|\cZ\|_F\rightarrow\infty}\frac{f^*(\cZ)-\langle \cZ,\cY\rangle+f(\cY)}{\|\cZ\|_F}\geq \lim_{\|\cZ\|_F\rightarrow\infty}\frac{f^*(\cZ)-\|\cZ\|_F\|\cY\|_F}{\|\cZ\|_F}=\infty.
\]
By \eqref{equ:tensorDxk}, the sequence $\{D_{f,\cZ^{(k)}}(\cX^{(k)},  \hat \cX)\}$ is decreasing, hence bounded and convergent. The above limit implies that $\|\cZ^{(k)}\|_F$ must be bounded. So for a subsequence $\{k_l\}$, we have $\lim\limits_{l\rightarrow\infty}\cZ^{(k_l)}=\tilde \cZ$. Thus, we get
\[
\lim_{l\rightarrow\infty} \cX^{(k_l)}=\lim_{l\rightarrow\infty}\nabla f^*(\cZ^{(k_l)})=\nabla f^*(\tilde \cZ):=\tilde \cX.
\]

We denote the constant $t\left(1-\frac{t N_3}{2\alpha_f}\right)$ by $r$. Since $i(k)$ is a control sequence for $[N_1]$, there exists $j_0\in[N_1]$ such that $i(k_l)$ has infinitely many terms of $j_0$. Without loss of generality, the subsequence can be picked such that
\[
\forall l, i(k_l)\equiv j_0, \text{ and }\{i(k_l), i(k_l+1), \cdots, i(k_{l+1}-1)\}=[N_1].
\]
Therefore, we have
\begin{align}\notag
D_{f,\cZ^{(k_{l+1})}}(\cX^{(k_{l+1})},  \hat \cX)&\leq D_{f,\cZ^{(k_l+1)}}(\cX^{(k_l+1)},  \hat \cX)\\\label{equ:tensorin}
&\leq D_{f,\cZ^{(k_l)}}(\cX^{(k_l)},  \hat \cX)- r\frac{\|\cA(i(k_l))*(\cX^{(k_l)}-\hat\cX)\|_F^2}{\|\cA(i(k_l))\|_F^2}.
\end{align}
By letting $l\rightarrow\infty$, we can get
\[
D_{f,\tilde \cZ}(\tilde \cX,  \hat \cX)
\leq D_{f,\hat \cZ}(\tilde \cX,  \hat \cX)- r\frac{\|\cA(j_0)*(\tilde\cX-\hat\cX)\|_F^2}{\|\cA(j_0)\|_F^2},
\]
which implies that $\cA(j_0)*(\tilde\cX-\hat\cX)=0$ and thereby $\tilde \cX\in H_{j_0}$.

Let $J_{in}:=\{j: \tilde \cX\in H_j\}$. Obviously, $j_0\in J_{in}$ by the previous analysis. Next we use the proof by contradiction to further show  $J_{in}=[N_1]$. Suppose that $J_{out}=[N_1]\backslash J_{in}\neq\emptyset$. For each $l$, we define $n_l$ to be the index in $\{k_l, k_l+1, \cdots, k_{l+1}-1\}$ such that $\{i(k_l), i(k_l+1), \cdots, i(n_l-1)\}\subset J_{in}$ and $i(n_l)\in J_{out}$.

Since $\tilde \cX\in H_j$ for any $j\in\{i(k_l), i(k_l+1), \cdots, i(n_l-1)\}$, we have, by \eqref{equ:tensorDxk},
\[
D_{f,\cZ^{(n_{l})}}(\cX^{(n_{l})},  \tilde \cX)
\leq D_{f,\cZ^{(n_{l}-1)}}(\cX^{(n_{l}-1)},  \tilde \cX)
\leq\ldots\leq D_{f,\cZ^{(k_l)}}(\cX^{(k_l)},  \tilde \cX)
\]
By letting $l\rightarrow\infty$, we have $\lim_{l\rightarrow\infty}\cX^{(n_{l})}=\tilde \cX$.

By choosing a subsequence, we can assume $i(n_l)\equiv j_1\in J_{out}$. By the same analysis as in \eqref{equ:tensorin}, we can get $\tilde \cX\in H_{j_1}$ which is a contradiction. Therefore, we must have $\tilde \cX\in H=\bigcap_{i=1}^{N_1} H_i$.

By \eqref{equ:tensorDxk}, $D_{f,\cZ^{(k)}}(\cX^{(k)}, \tilde\cX)$ is decreasing. Since its subsequence $D_{f,\cZ^{(k_l)}}(\cX^{(k_l)},  \tilde \cX)\rightarrow0$, the entire sequence $D_{f,\cZ^{(k)}}(\cX^{(k)},  \tilde \cX)$ converges to zero as well. This shows $\cX^{(k)}\rightarrow\tilde \cX$ due to \eqref{equ:Df}.

Now we have $\tilde \cZ=\lim_{k\rightarrow\infty}\cZ^{(k)}\in R(\cA)$. Since $\cZ^{(k)}\in\partial f (\cX^{(k)})$, we have $\tilde \cZ\in\partial f(\tilde \cX)$. So $\cA*\tilde\cX=\cB$ and $\tilde \cZ\in \partial f(\tilde \cX)\cup R(\cA)$ fulfills the optimality condition \eqref{equ:tensorminopt}. Since the solution of \eqref{equ:tensormin} is unique, we have $\hat \cX=\tilde \cX$.
\end{proof}

\begin{remark}
We would like to compare Theorem \ref{thm:tensorC} with \cite[Theorem 2.7]{lorenz2014linearized}. Note that \cite[Theorem 2.7]{lorenz2014linearized} considers the split feasibility problem, that is, finding vectors belong to the set $\{\vx|A_i\vx\in Q_i\}$, where $A_i$ are matrices and $Q_i$ are convex sets. Our theorem focuses on a specific minimization problem that recovers tensors.
\end{remark}

\subsection{Convergence Analysis with a Random Sequence}\label{sec:rand}
For the randomized version - Algorithm \ref{alg:tensorrand}, Theorem \ref{thm:tensorC} no longer applies. We will prove a linear convergence rate in expectation if the objective function $f$ satisfies certain conditions.

The dual problem of \eqref{equ:tensormin} is the unconstrained problem
\[
\min_{\cY\in\R^{N_1\times K\times N_3}} g_f(\cY),
\]
where
\begin{equation}\label{equ:dual}
g_f(\cY)=f^*(\cA^T*\cY)-\langle \cY,\cB\rangle.
\end{equation}

\begin{definition}
Let $m:=\displaystyle\min_{\cY}g_f(\cY)$. We will call a function $f$ \emph{admissible} if $g_f$, as defined in \eqref{equ:dual}, is restricted strongly convex on any of $g_f$'s level set $L^{g_f}_\delta:=\{\cY: g_f(\cY)\leq m+\delta\}$. We call $f$ \emph{strongly admissible} if $g_f$ is restricted strongly convex on $\R^{N_1\times K\times N_3}$.
\end{definition}

The following lemma is a consequence of Definition~\ref{def:rsc} and Lemma~\ref{lem:rsc}.%analogous to \cite[Theorem 3.10]{schopfer2019linear}.
\begin{lemma}\label{lem:tensoressential}
Let $\hat\cX$ be the solution of \eqref{equ:tensormin}.
\begin{enumerate}
\item[(a)] Assume $f$ is admissible. For $\tilde\cX$ and $\tilde\cZ\in\partial f(\tilde\cX)\cap R(\cA)$, there exists $\nu>0$ such that for all $\cX$ and $\cZ\in\partial f(\cX)\cap R(\cA)$ with $D_{f,\cZ}(\cX,\hat \cX)\leq D_{f,\tilde\cZ}(\tilde\cX,\hat \cX)$ it holds
\begin{equation}\label{equ:tensor1}
D_{f,\cZ}(\cX,\hat \cX)\leq\frac{1}{\nu}\|\cA*(\cX-\hat \cX)\|_F^2.
\end{equation}

\item[(b)] If $f$ is strongly admissible, there exists $\nu>0$ such that \eqref{equ:tensor1} holds for all $\cX$ and $\cZ\in\partial f(\cX)\cap R(\cA)$.
\end{enumerate}
\end{lemma}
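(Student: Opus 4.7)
The plan is to translate the primal Bregman distance $D_{f,\cZ}(\cX, \hat\cX)$ into a dual suboptimality gap $g_f(\cY) - m$, identify $\|\cA*(\cX-\hat\cX)\|_F^2$ with $\|\nabla g_f(\cY)\|_F^2$, and then apply Lemma~\ref{lem:rsc} to the (restricted) strongly convex function $g_f$.

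First I would pin down the dual minimum. The optimality condition \eqref{equ:tensorminopt} guarantees some $\hat\cZ \in \partial f(\hat\cX)\cap R(\cA)$, which we write as $\hat\cZ = \cA^T*\hat\cY$. Using $\cA*\hat\cX = \cB$ and the Fenchel identity $\langle \hat\cX,\hat\cZ\rangle = f(\hat\cX)+f^*(\hat\cZ)$, a direct calculation gives $g_f(\hat\cY) = -f(\hat\cX)$. Convexity of $g_f$ together with $\nabla g_f(\hat\cY) = \cA*\nabla f^*(\hat\cZ) - \cB = \cA*\hat\cX - \cB = 0$ shows $\hat\cY$ is a global minimizer, hence $m = -f(\hat\cX)$.

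Next, for arbitrary $\cX$ and $\cZ\in\partial f(\cX)\cap R(\cA)$, pick any $\cY$ with $\cZ = \cA^T*\cY$. Using $\cX = \nabla f^*(\cZ)$, the alternative form \eqref{equ:Df2} of the Bregman distance, and the identity $\langle \cY, \cB\rangle = \langle \cA^T*\cY, \hat\cX\rangle = \langle \cZ, \hat\cX\rangle$, one computes
\begin{align*}
D_{f,\cZ}(\cX, \hat\cX)
&= f^*(\cZ) - \langle \cZ, \hat\cX\rangle + f(\hat\cX) \\
&= f^*(\cA^T*\cY) - \langle \cY, \cB\rangle - m = g_f(\cY) - m.
\end{align*}
Analogously, $\nabla g_f(\cY) = \cA*\nabla f^*(\cA^T*\cY) - \cB = \cA*(\cX - \hat\cX)$, so $\|\nabla g_f(\cY)\|_F^2 = \|\cA*(\cX-\hat\cX)\|_F^2$. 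Both expressions depend only on $\cZ$ (not on the particular preimage $\cY$), so this identification is well-posed.

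With these two identities in hand, the lemma becomes a direct application of Lemma~\ref{lem:rsc}. For part (b), strong admissibility asserts $g_f$ is restricted strongly convex on the whole space with some constant $\nu>0$, so Lemma~\ref{lem:rsc} gives $g_f(\cY) - m \leq \nu^{-1}\|\nabla g_f(\cY)\|_F^2$ for every $\cY$, which is exactly \eqref{equ:tensor1}. For part (a), set $\delta := D_{f,\tilde\cZ}(\tilde\cX,\hat\cX) = g_f(\tilde\cY) - m$, where $\tilde\cZ = \cA^T*\tilde\cY$. The hypothesis $D_{f,\cZ}(\cX,\hat\cX) \leq D_{f,\tilde\cZ}(\tilde\cX,\hat\cX)$ translates precisely to $\cY\in L^{g_f}_\delta$, and admissibility then supplies $\nu>0$ (depending on $\delta$, hence on $\tilde\cX,\tilde\cZ$) for which $g_f$ is restricted strongly convex on $L^{g_f}_\delta$; Lemma~\ref{lem:rsc} closes the argument. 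The only real obstacle is carefully setting up the primal-to-dual dictionary; once the identities $D_{f,\cZ}(\cX,\hat\cX) = g_f(\cY) - m$ and $\nabla g_f(\cY) = \cA*(\cX-\hat\cX)$ are verified, the rest is immediate.
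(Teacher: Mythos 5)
Your proof is correct and follows essentially the same route as the paper's: rewrite $D_{f,\cZ}(\cX,\hat\cX)$ as the dual gap $g_f(\cY)-m$ via $\cZ=\cA^T*\cY$, identify $\nabla g_f(\cY)=\cA*(\cX-\hat\cX)$, and invoke Lemma~\ref{lem:rsc} on the appropriate level set (part (a)) or the whole space (part (b)). The only difference is that you explicitly verify $m=-f(\hat\cX)$ from the optimality condition where the paper simply cites strong duality, which is a harmless (and slightly more self-contained) elaboration.
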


\begin{proof}
(a) Due to the strong duality, we have that $f(\hat\cX)=-m=-\min_\cY g_f(\cY)$.

Since $\cZ\in R(\cA)$, we let $\cZ=\cA^T*\cY$ for some $\cY$. Then
\begin{align*}
D_{f,\cZ}(\cX,\hat\cX)&=f^*(\cZ)-\langle \cZ,\hat\cX\rangle+f(\hat\cX)=f^*(\cA^T*\cY)-\langle\cA^T*\cY, \hat\cX\rangle+f(\hat\cX)\\
&=f^*(\cA^T*\cY)-\langle\cY, \cA*\hat\cX\rangle+f(\hat\cX)=f^*(\cA^T*\cY)-\langle\cY, \cB\rangle-m\\&=g_f(\cY)-m.
\end{align*}
Similarly, for $\tilde \cX$, there exists $\tilde\cY$ such that  $D_{f,\cZ}(\tilde\cX,\hat\cX)=g_f(\tilde\cY)-m$.

The assumption $D_{f,\cZ}(\cX,\hat \cX)\leq D_{f,\tilde\cZ}(\tilde\cX,\hat \cX)$ implies that $\cY\in\{\cW:g_f(\cW)\leq g_f(\tilde\cY)\} $, a level set of $g_f$. By Lemma \ref{lem:rsc}, the restricted strong convexity of $g_f$ on its level set implies the existence of $\nu>0$ such that
\[
g_f(\cY)-m\leq\frac{1}{\nu}\|\nabla g_f(\cY)\|_F^2, \quad\text{for all }\cY\in\{\cW:g_f(\cW)\leq g_f(\tilde\cY)\}
\]
Furthermore, the gradient of $g_f$ is computed as
$$
\nabla g_f(\cY)=\cA*\nabla f^*(\cA^T*\cY)-\cB=\cA*\nabla f^*(\cZ)-\cB=\cA*\cX-\cB=\cA*(\cX-\hat\cX).
$$
This proves part (a).

(b) The proof is similar to (a) and simpler. Under the assumption that $g_f$ is restricted strong convex on $\R^{N_1\times K\times N_3}$, we no longer need to require the dual variable $\cY$ to be in a level set. The inequality \eqref{equ:tensor1} is thus true for any $\cX$ and $\cZ\in\partial f(\cX)\cap R(\cA)$.
\end{proof}

Lemma~\ref{lem:tensoressential} is a key lemma in proving linear convergence rate for randomized version. However, its assumptions are not easy to check. Below we provide some important examples.

\begin{example}\label{exa:admissible}
Admissible functions. See~\cite{schopfer2016linear}.
\begin{enumerate}
\item[(a)] If $f$ is strongly convex and piecewise quadratic and $\circc(\cA)$ has full row rank, then $f$ is strongly admissible. There are many functions fall under this category, e.g., $f_\lambda(\cX)=\frac{1}{2}\|\cX\|_F^2+\lambda\|\cX\|_1$.
\item[(b)]  Let $f$ be defined on matrices. The objective function of the regularized nuclear norm problem $f(X)=\frac{1}{2}\|X\|_F^2+\lambda\|X\|_*$ is admissible.
\item[(c)] When $f$ is defined on $\R^N$, important examples include $f_\lambda(\vx)=\frac{1}{2}\|\vx\|_2^2+\lambda\|\vx\|_1$, or more generally $\frac{1}{2}\|\vx\|_2^2+\lambda\|U\vx\|_1$, where $U$ is a linear operator.
\end{enumerate}
\end{example}

\begin{remark}\label{rem:gamma}
The constant $\nu$ in Lemma \ref{lem:tensoressential} depends on the tensor $\cA$, the function $f$, and the corresponding level set (if $f$ is only admissible). For a simple example, we let $K=N_3=1$, so $\cA*\cX$ degenerates to the regular matrix-vector multiplication $A\vx$. We let the objective function be $f_1(\vx)=\frac{1}{2}\|\vx\|_2^2$,  we have $\vx=\vz\in R(A)$. Furthermore, the minimizer $\hat \vx$ is the projection of the any feasible vector onto $R(A)$, so $\vx-\hat \vx\in R(A)$. Then $D_{f_1,\vz}(\vx,\hat \vx)=\frac{1}{2}\|\vx-\hat \vx\|_2^2=\frac{1}{2}\|P_{R(A)}(\vx-\hat \vx)\|_2^2\stackrel{\eqref{equ:A}}{\leq}\frac{1}{2\sigma^2_{\min}(A)}\|A(\vx-\hat \vx)\|_2^2$, which means that $\nu=2\sigma^2_{\min}(A)$. For a less trivial example, we refer the readers to \cite[Lemma 7]{LY13} for an explicit computation of $\nu$ for the function $f_\lambda(\vx)=\frac{1}{2}\|\vx\|_2^2+\lambda\|\vx\|_1$ ($A$ does not need to be full row rank). In general, it is hard to quantify $\nu$.
\end{remark}

\begin{theorem}\label{thm:tensorDrate}
Let $f$ be $\alpha_f$-strongly convex and admissible. Let  $\cX^{(k)}, \cZ^{(k)}$ be generated by the Algorithm \ref{alg:tensorrand}. If $t<2\alpha_f/N_3$, then $\cX^{(k)}$ converges linearly to $\hat \cX$ in expectation, i.e.,
\begin{equation}
\E\left[D_{f,\cZ^{(k+1)}}(\cX^{(k+1)},  \hat \cX)\right]\leq \left(1-\frac{\nu t(1-\frac{tN_3}{2\alpha_f})}{ \|\cA\|_F^2}\right)\E\left[D_{f,\cZ^{(k)}}(\cX^{(k)},  \hat \cX)\right],
\end{equation}
and thereby
\begin{equation}\label{equ:thmtensor2}
\E\|\cX^{(k)}-\hat \cX\|_F^2\leq c\left(1-\frac{\nu t(1-\frac{tN_3}{2\alpha_f})}{ \|\cA\|_F^2}\right)^k,
\end{equation}
where $\nu$ is the constant from \eqref{equ:tensor1}.
\end{theorem}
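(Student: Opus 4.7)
The plan is to mimic the strategy used for the vector randomized Kaczmarz convergence, chaining the per-step Bregman decrease from Proposition \ref{thm:tensorD} with the restricted strong convexity inequality of Lemma \ref{lem:tensoressential}, and exploiting the fact that the probabilities are chosen proportional to $\|\cA(j)\|_F^2$ so that a weighted average telescopes nicely via \eqref{equ:ai}.

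\textbf{Step 1: per-iteration Bregman decrease.} First I would apply Proposition \ref{thm:tensorD} along the slice $i(k)$ with $\cH=\hat\cX$, noting that $\hat\cX \in H_{i(k)}$ so that $\cB(i(k))-\cA(i(k))*\cX^{(k)} = \cA(i(k))*(\hat\cX-\cX^{(k)})$. This yields
\[
D_{f,\cZ^{(k+1)}}(\cX^{(k+1)},\hat\cX) \le D_{f,\cZ^{(k)}}(\cX^{(k)},\hat\cX) - \frac{t\bigl(1-\frac{tN_3}{2\alpha_f}\bigr)}{\|\cA(i(k))\|_F^2}\,\|\cA(i(k))*(\cX^{(k)}-\hat\cX)\|_F^2.
\]
Since $t<2\alpha_f/N_3$, the coefficient in front of the last term is positive, so the Bregman distance is monotonically nonincreasing along the iterates (regardless of which slice is chosen). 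This monotonicity is crucial for the admissible (not strongly admissible) case below, because it guarantees every iterate stays in the level set $\{\cZ:D_{f,\cZ}(\cX,\hat\cX)\le D_{f,\cZ^{(0)}}(\cX^{(0)},\hat\cX)\}$.

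\textbf{Step 2: take conditional expectation over the slice choice.} Conditioning on $\cX^{(k)}$ (equivalently $\cZ^{(k)}$) and using $\Pr(i(k)=j)=\|\cA(j)\|_F^2/\|\cA\|_F^2$, the slice weight cancels:
\[
\E_{i(k)}\!\left[\frac{\|\cA(i(k))*(\cX^{(k)}-\hat\cX)\|_F^2}{\|\cA(i(k))\|_F^2}\right] = \frac{1}{\|\cA\|_F^2}\sum_{j=1}^{N_1}\|\cA(j)*(\cX^{(k)}-\hat\cX)\|_F^2 = \frac{\|\cA*(\cX^{(k)}-\hat\cX)\|_F^2}{\|\cA\|_F^2},
\]
where the last identity is \eqref{equ:ai}.

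\textbf{Step 3: invoke admissibility.} Because $\cZ^{(k)}\in\partial f(\cX^{(k)})\cap R(\cA)$ (this is maintained by the update; it is the fact pointed out right after Algorithm \ref{alg:tensor}), and because Step 1 ensures $D_{f,\cZ^{(k)}}(\cX^{(k)},\hat\cX)\le D_{f,\cZ^{(0)}}(\cX^{(0)},\hat\cX)$, Lemma \ref{lem:tensoressential} (with $\tilde\cX=\cX^{(0)}$) gives a single constant $\nu>0$ such that
\[
D_{f,\cZ^{(k)}}(\cX^{(k)},\hat\cX)\le \frac{1}{\nu}\,\|\cA*(\cX^{(k)}-\hat\cX)\|_F^2 \qquad\text{for every }k.
\]
Combining this with Steps 1 and 2 and writing $\rho:=\nu t(1-\frac{tN_3}{2\alpha_f})/\|\cA\|_F^2$ yields
\[
\E_{i(k)}\!\left[D_{f,\cZ^{(k+1)}}(\cX^{(k+1)},\hat\cX)\right]\le (1-\rho)\,D_{f,\cZ^{(k)}}(\cX^{(k)},\hat\cX).
\]
Applying the tower property and iterating on $k$ proves the first claimed inequality. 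Finally, the left bound in \eqref{equ:Df} gives $\|\cX^{(k)}-\hat\cX\|_F^2\le \frac{2}{\alpha_f}D_{f,\cZ^{(k)}}(\cX^{(k)},\hat\cX)$, and taking expectations gives \eqref{equ:thmtensor2} with $c=\tfrac{2}{\alpha_f}D_{f,\cZ^{(0)}}(\cX^{(0)},\hat\cX)$.

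\textbf{Where the work is.} The only subtle point is handling the \emph{admissible} (rather than strongly admissible) case: Lemma \ref{lem:tensoressential}(a) furnishes $\nu$ only for iterates whose Bregman distance to $\hat\cX$ is dominated by the distance at some reference point. This is precisely why Step 1 must be done \emph{before} Step 3, so that all iterates sit inside the sublevel set determined by $\cX^{(0)}$, and a single $\nu$ works throughout the recursion. In the strongly admissible case this issue evaporates and Step 3 applies directly. Everything else is essentially a combination of Proposition \ref{thm:tensorD}, the weighting trick of \eqref{equ:ai}, and the standard conversion between the Bregman distance and the Frobenius distance from \eqref{equ:Df}.
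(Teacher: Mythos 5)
Your proposal is correct and follows essentially the same route as the paper's proof: the per-step decrease from Proposition \ref{thm:tensorD} (via Theorem \ref{thm:tensorC}), the conditional expectation computation using \eqref{equ:ai}, the application of Lemma \ref{lem:tensoressential}(a) justified by the pathwise monotonicity $D_{f,\cZ^{(k)}}(\cX^{(k)},\hat\cX)\le D_{f,\cZ^{(0)}}(\cX^{(0)},\hat\cX)$, and the final conversion through \eqref{equ:Df} with $c=\frac{2}{\alpha_f}D_{f,\cZ^{(0)}}(\cX^{(0)},\hat\cX)$. The subtlety you flag about the level-set condition in the merely admissible case is exactly the point the paper makes before invoking the lemma.
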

\begin{proof}

Let $\cG^{(k)}:=\cX^{(k)}-\hat \cX$. By Theorem \ref{thm:tensorC},
\begin{equation}\label{equ:tensorD}
D_{f,\cZ^{(k+1)}}(\cX^{(k+1)},  \hat\cX)\leq D_{f,\cZ^{(k)}}(\cX^{(k)},  \hat\cX)- r\frac{\|\cA(i(k))*\cG^{(k)}\|_F^2}{\|\cA(i(k))\|_F^2},
\end{equation}
where $r=t\left(1-\frac{tN_3}{2\alpha_f}\right)$.
We need to analyze the expectation of $\frac{\|\cA(i(k))*\cG^{(k)}\|_F^2}{\|\cA(i(k))\|_F^2}$.

Since $D_{f,\cZ^{(k)}}(\cX^{(k)},\hat \cX)\leq D_{f,\cZ^{(0)}}(\cX^{(0)},\hat \cX)$ for any $k\geq1$, we apply Lemma \ref{lem:tensoressential}(a) and get
\begin{equation}\label{equ:tensor2}
D_{f,\cZ^{(k)}}(\cX^{(k)},\hat \cX)\leq\frac{1}{\nu}\|\cA*\cG^{(k)}\|_F^2.
\end{equation}

At the $k$th iteration, the probability that ${i(k)}=j$ is proportional to $\|\cA(j)\|_F^2$. Therefore, the conditional expectation given the previous iterations $\cX^{(0)}, \cdots, \cX^{(k)}$ is
\begin{align}\label{equ:tensorpi1}
&\E\left[\frac{\|\cA(i(k))*\cG^{(k)}\|_F^2}{\|\cA(i(k))\|_F^2}\bigg|i(1),\dots, i(k-1)\right]
=\sum_{j=1}^{N_1}\frac{\|\cA(j)\|_F^2}{\|\cA\|_F^2} \frac{\|\cA(j)*\cG^{(k)}\|_F^2}{\|\cA(j)\|_F^2}\\
\label{equ:tensorpi3}
\stackrel{\eqref{equ:ai}}{=}&\frac{1}{\|\cA\|_F^2}\|\cA*\cG^{(k)}\|_F^2
\stackrel{\eqref{equ:tensor2}}{\geq}\frac{\nu}{\|\cA\|_F^2}D_{f,\cZ^{(k)}}(\cX^{(k)},\hat \cX).
\end{align}
Finally, we take the expectation of \eqref{equ:tensorD} and get
\[
\begin{aligned}
&\E\left[D_{f,\cZ^{(k+1)}}(\cX^{(k+1)},  \hat \cX)\right]\\
\leq &\, \E\left[D_{f,\cZ^{(k)}}(\cX^{(k)},  \hat\cX)-r\frac{\|\cA(i(k))*\cG^{(k)}\|_F^2}{\|\cA(i(k))\|_F^2}\right]\\
=&\, \E_{i(1),\dots, i(k-1)}\E\left[D_{f,\cZ^{(k)}}(\cX^{(k)},  \hat\cX)- r\frac{\|\cA(i(k))*\cG^{(k)}\|_F^2}{\|\cA(i(k))\|_F^2}\bigg|i(1),\dots, i(k-1)\right]\\
\leq&\, \E_{i(1),\dots, i(k-1)}\left[D_{f,\cZ^{(k)}}(\cX^{(k)},  \hat \cX)-\frac{r\nu}{ \|\cA\|_F^2}D_{f,\cZ^{(k)}}(\cX^{(k)},  \hat \cX)\right]\\
=&\, \left(1-\frac{r\nu}{ \|\cA\|_F^2}\right)\E\left[D_{f,\cZ^{(k)}}(\cX^{(k)},  \hat \cX)\right].
\end{aligned}
\]

To prove \eqref{equ:thmtensor2}, we note that $\E\left[D_{f,\cZ^{(k)}}(\cX^{(k)},  \hat \cX)\right]\leq\left(1-\frac{r\nu}{ \|\cA\|_F^2}\right)^{k} D_{f,\cZ^{(0)}}(\cX^{(0)},  \hat \cX)$.
Then by \eqref{equ:Df}, we have
\begin{align*}\E\|\cX^{(k)}-\hat \cX\|_F^2\leq\frac{2}{\alpha_f}\E\left[D_{f,\cZ^{(k)}}(\cX^{(k)},  \hat \cX)\right]\leq \left[\frac{2}{\alpha_f}D_{f,\cZ^{(0)}}(\cX^{(0)},  \hat \cX)\right]\left(1-\frac{r\nu}{\|\cA\|_F^2}\right)^{k}.
\end{align*}
\end{proof}

So far, we have taken care of consistent and noise-free constraints of the form $\cA*\cX=\cB$. In what follows, we analyze the sensitivity of Algorithm~\ref{alg:tensorrand} by considering the perturbed constraint $\cA*\cX=\tilde\cB$ where $\tilde\cB=\cB+\cE$. Here $\cE$ is typically assumed to be Gaussian noise.

\begin{theorem}\label{thm:tensornoise}
Let $\cA*\cX=\cB$ be the original consistent linear constraint. For the perturbed measurements $\tilde\cB=\cB+\cE$, Algorithm \ref{alg:tensorrand} performs the following updating scheme
\begin{align}
\label{equ:tensornoisez}&\bar \cZ^{(k+1)}=\bar \cZ^{(k)}+t\cA(i(k))^T*\frac{\cB(i(k))+\cE(i(k))-\cA(i(k))* \bar\cX^{(k)}}{\norm{\cA(i(k))}_F^2},\\
\label{equ:tensornoisex}&\bar \cX^{(k+1)}=\nabla f^*(\bar \cZ^{(k+1)}).
\end{align}
Let $f$ be $\alpha_f$-strongly convex and strongly admissible, and the stepsize $t<2\alpha_f/N_3$. If $\displaystyle\epsilon=\max_{i\in [N_1]}\frac{\|\cE(i)\|_F}{\|\cA(i)\|_F}$, then we have
\begin{equation}\label{equ:tensorenoise}
\E\sqrt{D_{f,\bar \cZ^{(k)}}(\bar \cX^{(k)},  \hat \cX)}\leq \left(\sqrt{1-\frac{\nu t(1-\frac{tN_3}{2\alpha_f})}{ \|\cA\|_F^2}}\right)^kD_{f,\bar \cZ^{(0)}}(\bar \cX^{(0)},  \hat \cX)+\frac{\sqrt{2\alpha_f}N_3\|\cA\|_F^2\epsilon}{\nu(1-\frac{tN_3}{2\alpha_f})},
\end{equation}
where $\hat \cX$ is the solution of \eqref{equ:tensormin}.
\end{theorem}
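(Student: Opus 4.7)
The plan is to mirror the structure of the proof of Theorem~\ref{thm:tensorDrate}, but with a ``virtual clean step'' inserted so as to separate the noise-free contraction from the noise-induced perturbation. First, alongside the noisy iterate $(\bar\cZ^{(k+1)},\bar\cX^{(k+1)})$, I will introduce the auxiliary pair $\tilde\cZ^{(k+1)}:=\bar\cZ^{(k)}+\frac{t}{\|\cA(i(k))\|_F^2}\cA(i(k))^T*(\cB(i(k))-\cA(i(k))*\bar\cX^{(k)})$ and $\tilde\cX^{(k+1)}:=\nabla f^*(\tilde\cZ^{(k+1)})$, namely the one-step update one would perform from $\bar\cX^{(k)}$ on the noise-free data. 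By construction $\bar\cZ^{(k+1)}-\tilde\cZ^{(k+1)}=\frac{t}{\|\cA(i(k))\|_F^2}\cA(i(k))^T*\cE(i(k))$. Applying Lemma~\ref{lem:dd} with $\vx^*=\bar\cZ^{(k+1)},\vx=\bar\cX^{(k+1)}$ and $\vw^*=\tilde\cZ^{(k+1)},\vw=\tilde\cX^{(k+1)}$ against the target $\vy=\hat\cX$ then gives
\[
D_{f,\bar\cZ^{(k+1)}}(\bar\cX^{(k+1)},\hat\cX)\leq D_{f,\tilde\cZ^{(k+1)}}(\tilde\cX^{(k+1)},\hat\cX)+\langle \bar\cZ^{(k+1)}-\tilde\cZ^{(k+1)},\bar\cX^{(k+1)}-\hat\cX\rangle.
\]

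I would next bound the two pieces separately. Proposition~\ref{thm:tensorD} applies directly to the clean pair $(\tilde\cZ^{(k+1)},\tilde\cX^{(k+1)})$, producing the same decrease $\frac{t(1-tN_3/(2\alpha_f))}{\|\cA(i(k))\|_F^2}\|\cA(i(k))*(\bar\cX^{(k)}-\hat\cX)\|_F^2$ from $D_{f,\bar\cZ^{(k)}}(\bar\cX^{(k)},\hat\cX)$ as in the noise-free proof. The cross term is rewritten via Lemma~\ref{lem:tensorinner} as $\frac{t}{\|\cA(i(k))\|_F^2}\langle \cE(i(k)),\cA(i(k))*(\bar\cX^{(k+1)}-\hat\cX)\rangle$; Cauchy--Schwarz, Lemma~\ref{lem:N3}, and the definition of $\epsilon$ bound it above by $t\sqrt{N_3}\,\epsilon\,\|\bar\cX^{(k+1)}-\hat\cX\|_F$, and the strong convexity bound \eqref{equ:Df} controls the last factor by $\sqrt{2D_{k+1}/\alpha_f}$, where $D_{k+1}:=D_{f,\bar\cZ^{(k+1)}}(\bar\cX^{(k+1)},\hat\cX)$.

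I then take conditional expectation over $i(k)$ with probabilities $\|\cA(j)\|_F^2/\|\cA\|_F^2$. Exactly as in the chain \eqref{equ:tensorpi1}--\eqref{equ:tensorpi3}, the averaged clean-step contraction reduces to $\|\cA*(\bar\cX^{(k)}-\hat\cX)\|_F^2/\|\cA\|_F^2$, which strong admissibility (Lemma~\ref{lem:tensoressential}(b)) bounds below by $\nu\,D_k/\|\cA\|_F^2$. Taking total expectation and applying Jensen's inequality $\E\sqrt{D_{k+1}}\leq\sqrt{\E D_{k+1}}$ to the perturbation term yields a recursion of the form
\[
\E D_{k+1}\leq (1-\rho)\,\E D_k+K\epsilon\sqrt{\E D_{k+1}},\qquad \rho=\frac{\nu t(1-tN_3/(2\alpha_f))}{\|\cA\|_F^2},\quad K=t\sqrt{2N_3/\alpha_f}.
\]

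Setting $M_k:=\sqrt{\E D_k}$, this is a quadratic inequality in $M_{k+1}$; its positive root, combined with $\sqrt{a+b}\leq\sqrt{a}+\sqrt{b}$, yields the clean one-step bound $M_{k+1}\leq\sqrt{1-\rho}\,M_k+K\epsilon$. A geometric-series induction then produces $M_k\leq(\sqrt{1-\rho})^kM_0+\frac{K\epsilon}{1-\sqrt{1-\rho}}$, and the identity $(1-\sqrt{1-\rho})^{-1}=(1+\sqrt{1-\rho})/\rho\leq 2/\rho$ together with $\E\sqrt{D_k}\leq M_k$ delivers a bound of the stated shape. The main obstacle is precisely the quadratic character of the one-step recursion: because the cross term coming out of Lemma~\ref{lem:dd} is linear in $\|\bar\cX^{(k+1)}-\hat\cX\|_F$, the quantity $\sqrt{D_{k+1}}$ appears on both sides of the inequality. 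Solving the resulting quadratic without sacrificing the contraction rate $\sqrt{1-\rho}$ inherited from the noise-free case---in particular, avoiding a naive Young-inequality split that would only give $\sqrt{1-\rho/2}$---is the technical heart of the argument.
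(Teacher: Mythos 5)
Your argument is correct and follows essentially the same route as the paper's proof: the same auxiliary noise-free iterate, Proposition~\ref{thm:tensorD} for the clean one-step contraction, Lemma~\ref{lem:dd} for the cross term, Lemma~\ref{lem:tensoressential}(b) under strong admissibility, and the same quadratic-inequality trick ($\alpha^2-\beta^2\leq a\alpha\Rightarrow\alpha\leq a+\beta$) followed by a geometric series. The only cosmetic differences are that you run the recursion in $\sqrt{\E D_k}$ and convert to $\E\sqrt{D_k}$ by Jensen at the end, whereas the paper solves the quadratic pointwise and tracks $\E\sqrt{D_k}$ throughout, and your slightly tighter handling of the noise term (a factor $\sqrt{N_3}$ in place of $N_3$) only changes the additive constant, not the form of the estimate.
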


\begin{proof}
Given $\bar \cZ^{(k)}$ and $\bar \cX^{(k)}$, we define
\begin{align}\label{equ:tensornoisezz}
&\cZ^{(k+1)}=\bar \cZ^{(k)}+t\cA(i(k))^T*\frac{\cB(i(k))-\cA(i(k))* \bar\cX^{(k)}}{\norm{\cA(i(k))}_F^2},\\
& \cX^{(k+1)}=\nabla f^*(\cZ^{(k+1)}).
\end{align}
Note that $\cZ^{(k+1)},  \cX^{(k+1)}$ are not those defined in Algorithm \ref{alg:tensorrand}. By \eqref{equ:tensornoisez} and \eqref{equ:tensornoisezz}, we have that $\bar \cZ^{(k+1)}=\cZ^{(k+1)}+\frac{t}{\|\cA(i)\|_F^2}\cA(i)^T*\cE(i)$.

With the update \eqref{equ:tensornoisezz}, we apply Proposition \ref{thm:tensorD}:
\begin{equation}\label{equ:tensorDn}
D_{f,\cZ^{(k+1)}}(\cX^{(k+1)},  \hat \cX)\leq D_{f,\bar \cZ^{(k)}}(\bar \cX^{(k)},  \hat \cX)- t\left(1-\frac{tN_3}{2\alpha_f}\right)\frac{\left\|\cA(i)*(\hat\cX-\bar \cX^{(k)})\right\|_F^2}{\|\cA(i)\|_F^2}.
\end{equation}

Due to the presence of noise, it is not clear if the assumption of Lemma \ref{lem:tensoressential}(a) holds. This is why we require strong admissibility, in which case Lemma \ref{lem:tensoressential}(b) applies and we have
\begin{equation}\label{equ:tensor3}
D_{f,\bar\cZ^{(k)}}(\bar\cX^{(k)},\hat \cX)\leq\frac{1}{\nu}\|\cA*(\bar\cX^{(k)}-\hat\cX)\|_F^2.
\end{equation}
Similar to the analysis in \eqref{equ:tensorpi1}-\eqref{equ:tensorpi3}, we take the conditional expectation of \eqref{equ:tensorDn} and get
\begin{equation}\label{equ:Ec}
\E\left[D_{f,\cZ^{(k+1)}}(\cX^{(k+1)},  \hat \cX)\Big|i(1),\dots, i(k-1)\right]\leq\left(1-\frac{\nu t(1-\frac{tN_3}{2\alpha_f})}{ \|\cA\|_F^2}\right)D_{f,\bar \cZ^{(k)}}(\bar \cX^{(k)},  \hat \cX).
\end{equation}

By Lemma \ref{lem:dd}, we get
\[\begin{aligned}
&\phantom{\leq\leq}D_{f,\bar \cZ^{(k+1)}}(\bar \cX^{(k+1)},  \hat \cX)-D_{f,\cZ^{(k+1)}}(\cX^{(k+1)},  \hat \cX)\\
&\leq \langle \bar \cZ^{(k+1)}-\cZ^{(k+1)}, \bar \cX^{(k+1)}-\hat \cX\rangle
     \leq \|\bar\cZ^{(k+1)}-\cZ^{(k+1)}\|_F\| \bar \cX^{(k+1)}-\hat \cX\|_F\\
&\stackrel{\eqref{equ:Df}}{\leq}\frac{t}{\|\cA(i)\|^2_F}\|\cA(i)^T*\cE(i)\|_F\sqrt{\frac{\alpha_f}{2}}\sqrt{D_{f,\bar\cZ^{(k+1)}}(\bar\cX^{(k+1)},\hat \cX)}\\
&\stackrel{\eqref{eqn:lem28}}{\leq}\frac{tN_3\|\cE(i)\|_F}{\|\cA(i)\|_F}\sqrt{\frac{\alpha_f}{2}}\sqrt{D_{f,\bar\cZ^{(k+1)}}(\bar\cX^{(k+1)},\hat \cX)}\leq tN_3\epsilon\sqrt{\frac{\alpha_f}{2}}\sqrt{D_{f,\bar\cZ^{(k+1)}}(\bar\cX^{(k+1)},\hat \cX)}.
\end{aligned}\]

By denoting $a=tN_3\epsilon\sqrt{\frac{\alpha_f}{2}}, \alpha^2=D_{f,\bar \cZ^{(k+1)}}(\bar \cX^{(k+1)},  \hat \cX), \beta^2=D_{f, \cZ^{(k+1)}}(\bar \cX^{(k+1)},  \hat \cX)$, we can rewrite the above inequality as $\alpha^2-\beta^2\leq a\alpha$, which implies
\[
\alpha\leq \frac{1}{2}(a+\sqrt{a^2+4\beta^2})
\]
and thereby $\alpha\leq a+\beta$. That is,
$$
\sqrt{D_{f,\bar \cZ^{(k+1)}}(\bar \cX^{(k+1)},  \hat \cX)}\leq  a+\sqrt{D_{f,\cZ^{(k+1)}}(\cX^{(k+1)},\hat \cX)}.
$$

For the sake of convenience, we let $\E_c$ be the expectation conditioned on $i(1),\dots, i(k-1)$. Then we have
\begin{align*}
&\E_c\sqrt{D_{f,\bar \cZ^{(k+1)}}(\bar \cX^{(k+1)},  \hat \cX)}\leq a+\E_c\sqrt{D_{f,\cZ^{(k+1)}}(\cX^{(k+1)},\hat \cX)}\\
\leq& a+\sqrt{\E_c\left[D_{f,\cZ^{(k+1)}}(\cX^{(k+1)},  \hat \cX)\right]}
\stackrel{\eqref{equ:Ec}}{\leq}a+\sqrt{\left(1-\frac{\nu t(1-\frac{tN_3}{2\alpha_f})}{ \|\cA\|_F^2}\right)D_{f,\bar \cZ^{(k)}}(\bar \cX^{(k)},  \hat \cX)},
\end{align*}
which implies
\begin{equation}\label{equ:tensorsqrt}
d_{k+1}\leq a+\sqrt{1-\frac{\nu t(1-\frac{tN_3}{2\alpha_f})}{ \|\cA\|_F^2}}d_{k}:=a+\sqrt{1-b}d_k,
\end{equation}
where $d_k:=\E\sqrt{D_{f,\bar \cZ^{(k)}}(\bar \cX^{(k)},  \hat \cX)}$.
After applying \eqref{equ:tensorsqrt} iteratively, we get
\begin{align*}
d_k&\leq(\sqrt{1-b})^kd_0+a\left(1+\sqrt{1-b}+(\sqrt{1-b})^2+\cdots+(\sqrt{1-b})^{k-1}\right)\\
&=(\sqrt{1-b})^kd_0+a\frac{1-(\sqrt{1-b})^k}{1-\sqrt{1-b}}\leq(\sqrt{1-b})^kd_0+a\frac{1}{1-\sqrt{1-b}}\\
&=(\sqrt{1-b})^kd_0+a\frac{1+\sqrt{1-b}}{b}\leq(\sqrt{1-b})^kd_0+\frac{2a}{b},
\end{align*}
which reduces to \eqref{equ:tensorenoise}.
\end{proof}

\section{Special Cases of the Proposed Algorithm}\label{sec:app}

\subsection{Matrix Recovery}\label{sec:matrix}
This section discusses the special cases when $N_3=1$. The tensor $\cA\in\R^{N_1\times N_2\times1}$ degenerates to the $N_1\times N_2$ matrix $A$. Although this has been brought up in Example \ref{exa:admissible}, we would like to include further discussions here.
Specifically, $\cX$ degenerates to the $N_2\times K$ matrix $X$ and $\cB$ degenerates to the $N_1\times K$ matrix $B$. The minimization problem \eqref{equ:tensormin} becomes a matrix recovery problem
\begin{equation}\label{equ:matrixmin}
\min_X f(X) \quad\st\quad AX=B.
\end{equation}

If $f(X)=\lambda\|X\|_*+\frac{1}{2}\|X\|_F^2$, which is an admissible function, we have
$$\nabla f^*(Z)=\prox_{\lambda\|\cdot\|_*}(Z)=D_\lambda(Z).$$
The singular value thresholding operator $D_\lambda(Z)$ is defined as $U\max\{S-\lambda,0\}V^T$, provided that $Z=USV^T$ is the singular value decomposition.

The following corollary is a consequence of Theorem \ref{thm:tensorC} and Theorem \ref{thm:tensorDrate}. Note this function is 1-strongly convex.
Similar to the tensor notation, we will let $A(i)$ be the $i$th row of $A$, and $R_M(A)=\{A^TY:Y\in\R^{N_1\times K}\}$.

\begin{corollary}\label{cor:matrix}
Let $\hat X$ be the solution of \eqref{equ:matrixmin} where $f(X)=\lambda\|X\|_*+\frac{1}{2}\|X\|_F^2$. This function is admissible with the constant $\nu$ (see \eqref{equ:tensor1}). Initializing with $Z^{(0)}\in R_M(A)\subset\mathbb{R}^{N_2\times K}$ and $X^{(0)}=D_\lambda(Z^{(0)})$, we perform the updating scheme
\begin{align}\label{equ:mreg1}
&Z^{(k+1)}=Z^{(k)}+tA(i(k))^T\frac{B(i(k))-A(i(k))X^{(k)}}{\norm{A(i(k))}_F^2},\\
 \label{equ:mreg2}&X^{(k+1)}=D_{\lambda}(Z^{(k+1)}),
 \end{align}
where $\{i(k)\}$ is a slice selection sequence and $t<2$.
\begin{itemize}
\item[(a)] If $\{i(k)\}$ is a control sequence for $[N_1]$, then the sequence $X^{(k)}$ converges to $\hat X$.
\item[(b)] If $i(k)$ is a random sequence such that $\Pr(i(k)=j)=p_j$, then
\begin{equation}
\E\left[D_{f,Z^{(k+1)}}(X^{(k+1)},  \hat X)\right]\leq \left(1-\nu t(1-\frac{t}{2})\min_j\{\frac{p_j}{\|A(j)\|_F^2}\}\right)\E\left[D_{f,Z^{(k)}}(X^{(k)},  \hat X)\right].
\end{equation}
\end{itemize}

\end{corollary}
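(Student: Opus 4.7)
The proof plan is to specialize the general tensor framework to $N_3=1$, where the $t$-product reduces to ordinary matrix multiplication, horizontal slices $\cA(i)$ become rows $A(i)$ of $A$, and $R(\cA)$ becomes $R_M(A)$. I would first note that $f(X)=\lambda\|X\|_*+\tfrac{1}{2}\|X\|_F^2$ is $1$-strongly convex (nuclear norm is convex, Frobenius squared is $1$-strongly convex, and adding a convex function preserves the strong-convexity modulus), so $\alpha_f=1$. The identity $\nabla f^*(Z)=\prox_{\lambda\|\cdot\|_*}(Z)=D_\lambda(Z)$ follows from the Fenchel duality $\prox_h=\nabla f^*$ with $f=h+\tfrac12\|\cdot\|_2^2$ recalled in Section \ref{sec:conv1}, together with the fact that the proximal operator of the nuclear norm is the singular-value thresholding operator. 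The admissibility of $f$ needed for part (b) is supplied by Example \ref{exa:admissible}(b).

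For part (a), I would simply invoke Theorem \ref{thm:tensorC} with $N_3=1$ and $\alpha_f=1$: the stepsize hypothesis $t<2$ is exactly $t<2\alpha_f/N_3$, the control sequence assumption matches, and the conclusion $X^{(k)}\to \hat X$ follows directly.

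For part (b), the updating scheme \eqref{equ:mreg1}--\eqref{equ:mreg2} coincides with Algorithm \ref{alg:tensorrand} specialized to $N_3=1$, except that the sampling probability $p_j$ is now arbitrary rather than the specific choice $\|A(j)\|_F^2/\|A\|_F^2$ used in Theorem \ref{thm:tensorDrate}. I would therefore repeat the short expectation argument from that proof, but replace the weighted factor by $p_j$. Starting from the per-iteration decrease
\[
D_{f,Z^{(k+1)}}(X^{(k+1)},\hat X)\leq D_{f,Z^{(k)}}(X^{(k)},\hat X)-t\Bigl(1-\tfrac{t}{2}\Bigr)\frac{\|A(i(k))(X^{(k)}-\hat X)\|_F^2}{\|A(i(k))\|_F^2}
\]
provided by Theorem \ref{thm:tensorC}, the conditional expectation equals $\sum_j p_j \|A(j)(X^{(k)}-\hat X)\|_F^2/\|A(j)\|_F^2$. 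Factoring out $\min_j\{p_j/\|A(j)\|_F^2\}$ and summing via the matrix analogue of \eqref{equ:ai} lower-bounds this by $\min_j\{p_j/\|A(j)\|_F^2\}\,\|A(X^{(k)}-\hat X)\|_F^2$. Since the Bregman distance is monotone along iterates by Theorem \ref{thm:tensorC}, the hypothesis of Lemma \ref{lem:tensoressential}(a) holds with $\tilde\cX=X^{(0)}$, giving $\|A(X^{(k)}-\hat X)\|_F^2\geq \nu\,D_{f,Z^{(k)}}(X^{(k)},\hat X)$. Assembling these inequalities and taking the outer expectation produces the claimed contraction.

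The only place where the argument is not a verbatim specialization of the tensor theorems is the extraction of the factor $\min_j\{p_j/\|A(j)\|_F^2\}$ in part (b); this is a mild but essential modification, and the statement correctly recovers Theorem \ref{thm:tensorDrate} when $p_j\propto\|A(j)\|_F^2$, since then $\min_j\{p_j/\|A(j)\|_F^2\}=1/\|A\|_F^2$. I expect no substantive obstacle beyond being careful with this generalization of the sampling distribution.
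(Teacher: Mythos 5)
Your proposal is correct and follows essentially the same route as the paper, which presents this corollary as a direct consequence of Theorem \ref{thm:tensorC} and Theorem \ref{thm:tensorDrate} (with $N_3=1$, $\alpha_f=1$) and notes in Remark \ref{rem:prob} that the general distribution $p_j$ is handled by exactly the modification you describe, namely bounding the conditional expectation below by $\min_j\{p_j/\|A(j)\|_F^2\}\,\|A(X^{(k)}-\hat X)\|_F^2$ before applying Lemma \ref{lem:tensoressential}(a). Your check that the bound recovers $1/\|A\|_F^2$ under the weighted sampling is a nice sanity check consistent with the paper.
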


\begin{remark}\label{rem:prob}
Corollary \ref{cor:matrix}(b) allows a more general probability distribution instead of the specific distribution in Algorithm \ref{alg:tensorrand}. The proof can be easily modified from that in Theorem \ref{thm:tensorDrate}.
\end{remark}
\begin{remark}
It is worth noting that the linear constraint $AX=B$ is in a different form from $\{\langle A_i, X\rangle=b_i, i\in[m]\}$. %which typically appears in a matrix completion problem.
However, the proof of Theorem \ref{thm:tensorDrate} can be easily adapted to this kind of constraints. If we focus on the regularized nuclear norm optimization problem
\begin{equation}\label{equ:rnn}
\min_{X\in\R^{N_2\times K}}\lambda\|X\|_*+\frac{1}{2}\|X\|_F^2\quad \st\quad \langle A_i, X\rangle=b_i, i\in[m],
\end{equation}
it is shown in \cite[Theorem 4.5]{schopfer2019linear} that the following algorithm
\begin{equation}\label{equ:matrix2}
\begin{array}{l}
Z^{(k+1)}=Z^{(k)}+tA_{i(k)}\frac{b(i(k))-\langle A_{i(k)},X^{(k)}\rangle}{\norm{A_{i(k)}}_F^2},\\
X^{(k+1)}=D_{\lambda}(Z^{(k+1)}).
\end{array}
\end{equation}
with a random sequence $i(k)$ has a linear convergence rate in expectation. Related numerical experiments can be found in Section \ref{subsec:inpaint}.
\end{remark}

\subsection{Vector Recovery}\label{sec:vec}
As mentioned in Remark \ref{rem:gamma}, when $\cA\in\R^{N_1\times N_2\times1}$ and $\cX\in\R^{N_2\times 1\times 1}$, $\cA$ degenerates to the $N_1\times N_2$ matrix $A$ and $\cX$ degenerates to the vector $\vx$. In this section, we further discuss this special case in more detail.
Specifically, we consider the following vector version of the minimization problem \eqref{equ:tensormin}
\begin{equation}\label{equ:min}
\hat \vx=\argmin_x f(\vx)\quad\st\quad A\vx=\vb,
\end{equation}
where $R(A)$ in this context is the row space of $A$. As a consequence, Algorithm \ref{alg:tensor} or Algorithm \ref{alg:tensorrand} becomes Algorithm~\ref{alg:vec}.

\begin{algorithm}[htb]
\caption{Regularized Kaczmarz Algorithm for Vector Recovery}\label{alg:vec}
\begin{algorithmic}
\State\textbf{Input:} $A\in\R^{N_1\times N_2}$, $b\in\R^{N_1}$, row selection sequence $\{i(k)\}_{k=1}^\infty\subseteq[N_1]$, stepsize $t$, maximal number of iterations $T$, and tolerance $tol$.
\State\textbf{Output:} an approximate of $\hat \vx$
\State\textbf{Initialize:} $\vz^{(0)}\in R(A)\subset\mathbb{R}^{N_2}, \vx^{(0)}=\nabla f^*(\vz^{(0)})$.
\For{$k=0,1,\ldots,T-1$}% {termination criteria not satisfied}
\State $\vz^{(k+1)}=\vz^{(k)}+t\frac{\vb_{i(k)}-A(i(k)) \vx^{(k)}}{\norm{A(i(k))}_2^2}A(i(k))^T$
\State $\vx^{(k+1)}=\nabla f^*(\vz^{(k+1)})$
\State If $\norm{\vx^{(k+1)}-\vx^{(k)}}_2/\norm{\vx^{(k)}}_2<tol$, then it stops.
\EndFor
\end{algorithmic}
\end{algorithm}

In this setting, Theorem \ref{thm:tensorC} and Theorem \ref{thm:tensorDrate} reduce to the following results.
\begin{corollary}\label{cor:vec}
Let $f$ be $\alpha_f$-strongly convex.
\begin{itemize}
\item[(a)] If $i(k)$ is a control sequence for $[N_1]$, then $\vx^{(k)}$ from Algorithm \ref{alg:vec} converges to $\hat\vx$.
\item[(b)] If $f$ is admissible and $i(k)$ is a random sequence such that $\Pr(i(k)=j)=p_j$, then the iterates from Algorithm \ref{alg:vec} satisfy
\begin{equation}
\E\left[D_{f,\vz^{(k+1)}}(\vx^{(k+1)},  \hat \vx)\right]\leq \left(1-\nu t(1-\frac{t}{2\alpha_f})\min_j\{\frac{p_j}{\|A(j)\|_2^2}\}\right)\E\left[D_{f,\vz^{(k)}}(\vx^{(k)},  \hat \vx)\right].
\end{equation}
\end{itemize}
\end{corollary}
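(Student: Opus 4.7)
The plan is to derive Corollary \ref{cor:vec} as a direct specialization of Theorems \ref{thm:tensorC} and \ref{thm:tensorDrate} to the case $N_3 = K = 1$, with a minor modification in part (b) to accommodate the more general probability distribution $p_j$.

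First, I would observe that when $N_3 = K = 1$, the tensor $\cA$ reduces to the matrix $A$, the t-product $\cA*\cX$ reduces to the matrix-vector product $A\vx$, and $\cA^T$ reduces to the ordinary transpose. The Frobenius norm $\|\cA(i)\|_F$ becomes $\|A(i)\|_2$, and Algorithm \ref{alg:tensor} (respectively Algorithm \ref{alg:tensorrand}) becomes exactly Algorithm \ref{alg:vec}. Under this identification, the stepsize bound $t < 2\alpha_f/N_3$ simplifies to $t < 2\alpha_f$, and the row space $R(\cA)$ coincides with the standard row space $R(A)$. Part (a) is then an immediate consequence of Theorem \ref{thm:tensorC}: for any control sequence $\{i(k)\}$ and $\alpha_f$-strongly convex $f$, the generated sequence $\vx^{(k)}$ converges to the unique solution $\hat\vx$ of \eqref{equ:min}.

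For part (b), I would essentially reuse the proof of Theorem \ref{thm:tensorDrate}, replacing only the step where the conditional expectation is computed. Setting $\vg^{(k)} := \vx^{(k)} - \hat\vx$, the one-step Bregman decrease from Proposition \ref{thm:tensorD} (specialized to the vector case) gives
\[
D_{f,\vz^{(k+1)}}(\vx^{(k+1)}, \hat\vx) \leq D_{f,\vz^{(k)}}(\vx^{(k)}, \hat\vx) - t\Bigl(1 - \tfrac{t}{2\alpha_f}\Bigr)\frac{|A(i(k))\vg^{(k)}|^2}{\|A(i(k))\|_2^2}.
\]
With the general distribution $\Pr(i(k) = j) = p_j$, the conditional expectation of the last term becomes
\[
\sum_{j=1}^{N_1} p_j \frac{|A(j)\vg^{(k)}|^2}{\|A(j)\|_2^2} \geq \min_j\!\left\{\frac{p_j}{\|A(j)\|_2^2}\right\}\sum_{j=1}^{N_1}|A(j)\vg^{(k)}|^2 = \min_j\!\left\{\frac{p_j}{\|A(j)\|_2^2}\right\}\|A\vg^{(k)}\|_2^2.
\]
Applying Lemma \ref{lem:tensoressential}(a) (which uses admissibility of $f$ together with the monotonic decrease of $D_{f,\vz^{(k)}}(\vx^{(k)},\hat\vx)$ inherited from Theorem \ref{thm:tensorC}) yields $\|A\vg^{(k)}\|_2^2 \geq \nu\, D_{f,\vz^{(k)}}(\vx^{(k)}, \hat\vx)$. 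Taking total expectation produces the claimed recursion.

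There is no substantive obstacle: the argument is a straightforward transcription of the tensor proofs to the scalar setting. The only subtlety is the replacement of $\|\cA(j)\|_F^2/\|\cA\|_F^2$ by an arbitrary $p_j$, which is precisely what forces the factor $\min_j\{p_j/\|A(j)\|_2^2\}$ to appear (in place of $1/\|\cA\|_F^2$) in the contraction rate. This same observation was already noted in Remark \ref{rem:prob} for the matrix case, so the vector statement follows by the same bookkeeping.
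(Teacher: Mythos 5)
Your proposal is correct and follows exactly the route the paper intends: the paper gives no separate proof of Corollary \ref{cor:vec}, stating only that Theorems \ref{thm:tensorC} and \ref{thm:tensorDrate} ``reduce to'' it in the vector setting, with the generalization to an arbitrary distribution $p_j$ handled by the same modification flagged in Remark \ref{rem:prob}. Your derivation of the factor $\min_j\{p_j/\|A(j)\|_2^2\}$ in the conditional expectation, and your use of the pathwise monotonicity of the Bregman distance to justify invoking Lemma \ref{lem:tensoressential}(a), fill in precisely the bookkeeping the authors leave implicit.
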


Note that Corollary \ref{cor:vec} aligns with the previous results in literature. Part (a) can be found in \cite[Theorem 2.7]{lorenz2014linearized} in a more general setting, and part (b) can be found in \cite[Theorem 4.5]{schopfer2019linear}. The following noisy setting is a new contribution, which is a result of Theorem \ref{thm:tensornoise}.

\begin{corollary}\label{cor:noise}
Let $f$ be $\alpha_f$-strongly convex and strongly admissible.  Let $\bar \vz^{(k)}, \bar\vx^{(k)}$ be generated from Algorithm \ref{alg:vec} ($t<2\alpha_f$) with the noisy constraint $A\vx=\vb+\ve$, i.e.,
\begin{align}
\label{equ:vecnoisez}&\bar \vz^{(k+1)}=\bar \vz^{(k)}+t\frac{\vb_{i(k)}+\ve_{i(k)}-A(i(k)) \bar\vx^{(k)}}{\norm{A(i(k))}_2^2}A(i(k))^T\\
\label{equ:vecnoisex}& \bar \vx^{(k+1)}=\nabla f^*(\bar \vz^{(k+1)}).
\end{align}
Moreover, $\{i(k)\}$ is a random sequence such that $\Pr(i(k)=j)=\frac{\|A(j)\|_2^2}{\|A\|_F^2}$.
Let $\epsilon=\max\limits_{i\in [N_1]}\frac{|\ve_i|}{\|A(i)\|_2}$. We have
\begin{equation}\label{equ:vecnoise}
\E\sqrt{D_{f,\bar \vz^{(k)}}(\bar \vx^{(k)},  \hat \vx)}\leq \left(\sqrt{1-\frac{\nu t(1-\frac{t}{2\alpha_f})}{ \|A\|_F^2}}\right)^kD_{f,\bar \vz^{(0)}}(\bar \vx^{(0)},  \hat \vx)+\frac{\sqrt{2\alpha_f}\|A\|_F^2\epsilon}{\nu(1-\frac{t}{2\alpha_f})},
\end{equation}
where $\hat \vx$ is still the solution of \eqref{equ:min}.
\end{corollary}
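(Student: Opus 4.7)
The plan is to deduce Corollary \ref{cor:noise} as the direct specialization of Theorem \ref{thm:tensornoise} to the scalar third dimension, i.e.\ $N_3 = 1$ (with $K = 1$ as well). Under this reduction the tensor $\cA \in \R^{N_1 \times N_2 \times 1}$ collapses to the matrix $A$, the horizontal slice $\cA(i)$ becomes the $i$th row $A(i)$, and the t-product $\cA * \cX$ reduces to ordinary matrix-vector multiplication $A\vx$. Similarly $\cB \leftrightarrow \vb$, $\cE \leftrightarrow \ve$, and the tensor Frobenius norms satisfy $\|\cA\|_F = \|A\|_F$ and $\|\cA(i)\|_F = \|A(i)\|_2$. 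With this dictionary, no new analytic content is needed.

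With the dictionary in hand, I would verify item-by-item that each hypothesis of Theorem \ref{thm:tensornoise} reads correctly after specialization: strong admissibility of $f$ is identical (the dual $g_f$ is now defined on $\R^{N_1}$, but the notion of restricted strong convexity is unchanged); the stepsize condition $t < 2\alpha_f / N_3$ becomes $t < 2\alpha_f$; the noise parameter $\epsilon$ is unchanged; the sampling probability $\Pr(i(k) = j) = \|\cA(j)\|_F^2/\|\cA\|_F^2$ matches the one used in Corollary \ref{cor:noise}; and the Kaczmarz update \eqref{equ:tensornoisez}--\eqref{equ:tensornoisex} translates exactly to \eqref{equ:vecnoisez}--\eqref{equ:vecnoisex} since $\cA(i)^T * \cE(i)$ collapses to $\ve_{i}\, A(i)^T$ when $N_3 = K = 1$. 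Substituting $N_3 = 1$ into the conclusion \eqref{equ:tensorenoise} then yields \eqref{equ:vecnoise} verbatim.

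Since the corollary is a pure specialization, there is no real obstacle; the substantive analytical work, namely the one-step contraction via Proposition \ref{thm:tensorD}, the application of Lemma \ref{lem:tensoressential}(b) to pass from an expected squared-residual decrement to a Bregman-distance decrement, and the square-root recurrence $d_{k+1} \le a + \sqrt{1-b}\,d_k$ that yields the geometric plus steady-state bound, has already been done in the proof of Theorem \ref{thm:tensornoise}. The only item I would explicitly double-check is the perturbation bound $\|\cA(i)^T * \cE(i)\|_F \le \sqrt{N_3}\,\|\cA(i)\|_F \|\cE(i)\|_F$ from Lemma \ref{lem:N3}, which contributes the factor $N_3$ appearing in front of the steady-state term of \eqref{equ:tensorenoise}: when $N_3 = 1$ this reduces to $\|\ve_i A(i)^T\|_2 = |\ve_i|\|A(i)\|_2$ with no spurious $\sqrt{N_3}$ loss, so the steady-state term becomes exactly $\sqrt{2\alpha_f}\,\|A\|_F^2\,\epsilon /\bigl(\nu(1 - t/(2\alpha_f))\bigr)$ as claimed in \eqref{equ:vecnoise}.
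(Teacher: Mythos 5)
Your proposal is correct and matches the paper's approach: the paper gives no separate proof and simply states that Corollary \ref{cor:noise} is a consequence of Theorem \ref{thm:tensornoise}, which is exactly the specialization to $N_3=K=1$ that you carry out. Your item-by-item check of the dictionary (including the disappearance of the $N_3$ and $\sqrt{N_3}$ factors) is accurate and fills in the details the paper leaves implicit.
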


In the work \cite{lorenz2014linearized}, some noisy models in practice have been briefly mentioned but without theoretical analysis. Corollary \ref{cor:noise} states that with perturbed measurements, the expected Bregman distance (squared rooted) still enjoys an exponential decay and the iterates are within certain radius (proportional to the noise level) of the true solution.

\subsubsection{Kaczmarz Type of Algorithms}
Important examples that satisfy the assumptions of Corollary \ref{cor:vec} and Corollary \ref{cor:noise} are when $f$ is strongly convex and piecewise quadratic, and $A$ is full row rank (see Example \ref{exa:admissible}).

For $f_1(\vx)=\frac{1}{2}\|\vx\|_2^2$, we have $\vx^{(k)}=\vz^{(k)}$, and Algorithm \ref{alg:vec} becomes the well-known Kaczmarz algorithm~\cite{karczmarz1937angenaherte}
\[
\vx^{(k+1)}=\vx^{(k)}+t\frac{\vb_{i(k)}-A(i(k)) \vx^{(k)}}{\norm{A(i(k))}_2^2}A(i(k))^T,
\]
which is known to converge to the minimum norm solution of $A\vx=\vb$ if the initial $\vx^{(0)}$ is in the row space of $A$. Linear convergence for the randomized Kaczmarz algorithm can be found in \cite{strohmer2009randomized, chen2012almost}.

For $f_\lambda(\vx)=\lambda\|\vx\|_1+\frac{1}{2}\|\vx\|_2^2$, \eqref{equ:min} becomes a regularized version of the Basis Pursuit~\cite{Y08Breg, COS09}:
\begin{equation}\label{equ:rl1}
\hat \vx=\argmin_x \lambda\|\vx\|_1+\frac{1}{2}\|\vx\|_2^2 \quad\st\quad A\vx=\vb,
\end{equation}
and Algorithm \ref{alg:vec} becomes the \emph{sparse Kaczmarz method}
\begin{equation}\label{equ:skm}
\begin{array}{l}\vz^{(k+1)}=\vz^{(k)}+t\frac{\vb_{i(k)}-A(i(k)) \vx^{(k)}}{\norm{A(i(k))}_2^2}A(i(k))^T\\
\vx^{(k+1)}=D_\lambda(\vz^{(k+1)})
\end{array}
\end{equation}
that was proposed in \cite{lorenz2014linearized}.

\subsection{Tensor Nuclear Norm Regularized Minimization}\label{sec:tenrankmin}
In this section, we consider one special case of tensor recovery involving the nuclear-norm regularization and linear measurements. Specifically, we adapt the proposed algorithms for solving the \emph{tensor nuclear norm regularized minimization} problem
\begin{equation}\label{equ:tensorreg}
\hat\cX=\argmin_{\cX\in\R^{N_2\times K\times N_3}} \frac{1}{2}\|\cX\|_F^2+\lambda\|\cX\|_\tnn \quad\st\quad \cA*\cX=\cB,
\end{equation}
where $\cA\in\R^{N_1\times N_2\times N_3}$ and $\cB\in\mathbb{R}^{N_1\times K\times N_3}$.
Here $\norm{\cX}_{tnn}$ is the \emph{tensor nuclear norm} of $\cX$, which is defined through the operator in Definition \ref{def:F} \cite{semerci2014tensor}, that is,
$$\|\cX\|_\tnn:=\sum_{k=1}^{N_3}\|F(\cX)_k\|_*.$$
In addition, tensor nuclear norm can be computed through t-SVD~\cite{KC11}, which involves the SVD of each frontal slice $F(\cX)_k$:
$$F(\cX)_k=\tilde U_k\tilde S_k\tilde V_k^T.$$
Let $\tilde\cU$ be the tensor generated by concatenating $\tilde{U}_k$'s along the third dimension such that $\tilde U_k$ is the $k$th frontal slice of $\tilde\cU$. Likewise, we obtain $\tilde\cS, \tilde\cV$. Denote $\cU=F^{-1}(\tilde\cU)$ and $\cV=F^{-1}(\tilde\cV)$.
The \emph{singular tube thresholding operator} (STT) \cite{semerci2014tensor} is defined as
\[
\sss_\tau(\cX):=\cU*F^{-1}(\max(\tilde \cS-\tau,0))*\cV^T.
\]
As a straightforward application of this definition, we obtain the following lemma.

\begin{lemma}\label{lem:alg2}
For any $\cX\in\R^{N_2\times K\times N_3}$, the equality $\cY=\sss_\lambda(\cX)$ holds if and only if $F(\cY)_k=D_\lambda(F(\cX_k))$ for $k=1,\ldots,N_3$.
\end{lemma}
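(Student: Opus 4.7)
The plan is to leverage the Fourier-domain block-diagonalization of the t-product established in Lemma \ref{lem:fourier}, which reduces the tensor identity $\cY = \sss_\lambda(\cX)$ to a collection of $N_3$ decoupled matrix identities, one per frontal slice of $F(\cY)$. Once the t-product becomes frontal-slice-wise ordinary matrix multiplication, the definition of $\sss_\lambda$ collapses directly into the definition of the matrix SVT $D_\lambda$.

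Concretely, I would first apply $F$ to both sides of the defining equation $\cY = \cU * F^{-1}(\max(\tilde\cS-\lambda,0)) * \cV^T$. Because t-products are block-diagonalized in the Fourier domain, Lemma \ref{lem:fourier} extended by associativity gives
\[
F(\cY)_k \;=\; F(\cU)_k \cdot F\!\left(F^{-1}(\max(\tilde\cS-\lambda,0))\right)_k \cdot F(\cV^T)_k
\]
for every $k\in[N_3]$. I would then evaluate the three factors separately: by construction $\cU = F^{-1}(\tilde\cU)$, so $F(\cU)_k = \tilde U_k$; the middle factor collapses to $\max(\tilde S_k-\lambda,0)$ by the definition of the inverse Fourier transform along the tube; and using $\circc(\cV^T) = \circc(\cV)^T$ together with the block-diagonalization relation (cf. Lemma \ref{lem:alg1}), the $k$th frontal slice of $F(\cV^T)$ is $\tilde V_k^T$. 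Substituting yields
\[
F(\cY)_k \;=\; \tilde U_k \, \max(\tilde S_k-\lambda,0) \, \tilde V_k^T,
\]
which is precisely $D_\lambda(F(\cX)_k)$ by the defining formula of the matrix singular value thresholding operator applied to $F(\cX)_k = \tilde U_k \tilde S_k \tilde V_k^T$. This proves the forward direction, and the reverse follows by reading the same chain of equalities backwards, since $F$ is a bijection and the factorization on each frontal slice determines $\cY$ uniquely.

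The step most in need of care is justifying $F(\cV^T)_k = \tilde V_k^T$ (rather than $\tilde V_k^*$), since the Fourier block-diagonalization of a real block-circulant matrix generically produces conjugate transposes. I expect this to follow from the conjugate-symmetry of $F(\cV)$ across the third dimension (a consequence of $\cV$ being real), matched with the paper's convention that the t-SVD factors are defined through $F(\cX)_k = \tilde U_k \tilde S_k \tilde V_k^T$; once that compatibility is checked, the remainder of the argument is a routine unpacking of definitions.
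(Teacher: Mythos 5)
Your proof is correct and is essentially the argument the paper intends: the paper offers no written proof, calling the lemma ``a straightforward application of this definition,'' and your Fourier-domain slice-wise unpacking of $\sss_\lambda$ via the block-diagonalization of the t-product is exactly that straightforward application. The one point you flag resolves as you anticipate: for a real tensor $\cV$ one has $F(\cV^T)_k=(F(\cV)_k)^*$ by conjugate symmetry across the third dimension, and since the paper writes the same (slightly abused) transpose symbol both in the t-SVD factorization $F(\cX)_k=\tilde U_k\tilde S_k\tilde V_k^T$ and in the definition of $D_\lambda$, the two sides of the claimed identity use the identical factor and the equality holds.
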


Next we develop regularized Kaczmarz algorithms for solving \eqref{equ:tensorreg}. It can be shown that $\nabla f^*(\cZ)=\prox_{\lambda\|\cdot\|_\tnn}(\cZ)=\sss_{\lambda}(\cZ)$; see \cite[Theorem 4.2]{lu2019tensor} for more examples.
In this case, Algorithm \ref{alg:tensor} with a given control sequence reduces to Algorithm \ref{alg:tensorreg} which alternates the Kaczmarz step and singular tube thresholding.

\begin{algorithm}[htb]
\caption{Regularized Kaczmarz Algorithm for Solving \eqref{equ:tensorreg}}\label{alg:tensorreg}
\begin{algorithmic}
\State\textbf{Input:} $\cA\in\R^{N_1\times N_2\times N_3}$, $\cB\in\R^{N_1\times K\times N_3}$, control sequence $\{i(k)\}_{k=1}^\infty\subseteq[N_1]$, stepsize $t$, maximum number of iterations $T$, and tolerance $tol$.
\State\textbf{Output:} an approximate of $\hat \cX$
\State\textbf{Initialize:} $\cZ^{(0)}\in R(\cA)\subset\mathbb{R}^{N_2\times K\times N_3}, \cX^{(0)}=\sss_\lambda(\cZ^{(0)})$.
\For{$k=0,1,\ldots,T-1$}%While {termination criteria not satisfied}
\State $\cZ^{(k+1)}=\cZ^{(k)}+t\cA(i(k))^T*\frac{\cB(i(k))-\cA(i(k))* \cX^{(k)}}{\norm{\cA(i(k))}_F^2}$
\State $\cX^{(k+1)}=\sss_\lambda(\cZ^{(k+1)})$
\State If $\norm{\cX^{(k+1)}-\cX^{(k)}}_F/\norm{\cX^{(k)}}_F<tol$, then it stops.
\EndFor
\end{algorithmic}
\end{algorithm}

The following corollary is a direct consequence of Theorem \ref{thm:tensorC} as the objective function is 1-strongly convex.

\begin{corollary}
The sequence generated by Algorithm \ref{alg:tensorreg} with $t<2/N_3$ satisfies
\begin{equation}
D_{f,\cZ^{(k+1)}}(\cX^{(k+1)},  \cX)\leq D_{f,\cZ^{(k)}}(\cX^{(k)},  \cX)- t\left(1-\frac{t N_3}{2}\right)\frac{\|\cA(i(k))*(\cX^{(k)}-\cX)\|_F^2}{\|\cA(i(k))\|_F^2},
\end{equation}
for all $\cX\in H_{i(k)}$. Moreover, the sequence $\{\cX^{(k)}\}$ converges to the solution of \eqref{equ:tensorreg}.
\end{corollary}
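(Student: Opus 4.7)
The plan is to recognize this corollary as a direct specialization of Theorem~\ref{thm:tensorC} to the particular choice $f(\cX)=\tfrac{1}{2}\|\cX\|_F^2+\lambda\|\cX\|_\tnn$, so that essentially all work is already done and only the hypotheses of Theorem~\ref{thm:tensorC} need to be verified. First I would check that $f$ is $\alpha_f$-strongly convex with $\alpha_f=1$: the Frobenius-norm squared piece is $1$-strongly convex on the tensor space (this is the tensor analogue of Example~2.3 in the vector case, obtained by unfolding), and adding the convex function $\lambda\|\cdot\|_\tnn$ preserves $1$-strong convexity, as noted in the remark immediately following Example~2.3. With $\alpha_f=1$, the stepsize restriction $t<2\alpha_f/N_3$ in Theorem~\ref{thm:tensorC} becomes exactly $t<2/N_3$, matching the hypothesis of the corollary.

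Next I would confirm that Algorithm~\ref{alg:tensorreg} is literally Algorithm~\ref{alg:tensor} instantiated at this $f$. The only non-obvious point is the identification $\nabla f^*(\cZ)=\sss_\lambda(\cZ)$. Using the Fenchel-duality relation recalled in Section~\ref{sec:conv1}, $\prox_h(\vv)=\nabla f^*(\vv)$ whenever $f(\vx)=h(\vx)+\tfrac{1}{2}\|\vx\|_2^2$, I set $h=\lambda\|\cdot\|_\tnn$ and invoke the fact (cited from \cite[Theorem 4.2]{lu2019tensor}) that $\prox_{\lambda\|\cdot\|_\tnn}=\sss_\lambda$. Thus the $\cZ$-update and the $\cX$-update of Algorithm~\ref{alg:tensorreg} coincide with those of Algorithm~\ref{alg:tensor}, and the initialization $\cX^{(0)}=\sss_\lambda(\cZ^{(0)})=\nabla f^*(\cZ^{(0)})$ is consistent.

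Having matched the hypotheses, the conclusions of Theorem~\ref{thm:tensorC} transfer verbatim: the Bregman-distance inequality
\[
D_{f,\cZ^{(k+1)}}(\cX^{(k+1)},\cX)\leq D_{f,\cZ^{(k)}}(\cX^{(k)},\cX)-t\Bigl(1-\tfrac{tN_3}{2}\Bigr)\frac{\|\cA(i(k))*(\cX^{(k)}-\cX)\|_F^2}{\|\cA(i(k))\|_F^2}
\]
holds for every $\cX\in H_{i(k)}$, and $\cX^{(k)}$ converges to the unique solution of the constrained problem \eqref{equ:tensorreg}. Uniqueness of the minimizer of \eqref{equ:tensorreg} on the feasible set follows from strict convexity of $f$ (a consequence of $1$-strong convexity), so Theorem~\ref{thm:tensorC}'s conclusion that the limit satisfies the optimality conditions pins it down as $\hat\cX$.

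The only conceivable obstacle is the verification of $\nabla f^*(\cZ)=\sss_\lambda(\cZ)$, since the convex-conjugate machinery of Section~\ref{sec:conv1} is stated for functions on $\R^N$; however, this transfers to the tensor case by the standard reshaping argument already invoked at the start of Section~\ref{sec:conv1}, combined with the cited identification of the proximal operator of $\|\cdot\|_\tnn$ with the singular tube thresholding operator. Everything else is bookkeeping.
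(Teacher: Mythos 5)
Your proposal is correct and matches the paper's treatment exactly: the paper proves this corollary in one line by observing that the objective is $1$-strongly convex and invoking Theorem~\ref{thm:tensorC}, with the identification $\nabla f^*(\cZ)=\prox_{\lambda\|\cdot\|_\tnn}(\cZ)=\sss_\lambda(\cZ)$ already established just before Algorithm~\ref{alg:tensorreg}. Your write-up simply spells out the same verification steps (strong convexity constant, stepsize condition, and the coincidence of the two algorithms) in more detail.
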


Similarly, we can adapt Algorithm \ref{alg:tensorrand} with a random selection of row index to get a reduced version, i.e., Algorithm \ref{alg:tensorrandreg}, for solving \eqref{equ:tensorreg}.

\begin{algorithm}[htb]
\caption{Randomized Regularized Kaczmarz Algorithm for Solving \eqref{equ:tensorreg}}\label{alg:tensorrandreg}
\begin{algorithmic}
\State\textbf{Input:} $\cA\in\R^{N_1\times N_2\times N_3}$, $\cB\in\R^{N_1\times K\times N_3}$, stepsize $t$.
\State\textbf{Output:} an approximate of $\hat \cX$
\State\textbf{Initialize:} $\cZ^{(0)}\in R(\cA)\subset\mathbb{R}^{N_2\times K\times N_3}, \cX^{(0)}=\sss_\lambda(\cZ^{(0)})$.
\While {termination criteria not satisfied}
\State pick $i(k)$ randomly from $[N_1]$ with $\Pr(i(k)=j)=\|\cA(j)\|_F^2/\|\cA\|_F^2$,
\State $\cZ^{(k+1)}=\cZ^{(k)}+t\cA(i(k))^T*\frac{\cB(i(k))-\cA(i(k))* \cX^{(k)}}{\norm{\cA(i(k))}_F^2}$
\State $\cX^{(k+1)}=\sss_\lambda(\cZ^{(k+1)})$
\EndWhile
\end{algorithmic}
\end{algorithm}

Regarding the convergence analysis of Algorithm~\ref{alg:tensorrandreg}, Theorem \ref{thm:tensorDrate} can not be applied directly as it is not immediately clear if this function is admissible. To address this issue, we propose the following lemma, which can be shown using the definition of Bregman distance and Lemma \ref{lem:alg2}.

\begin{lemma}\label{lem:D}
Let $f(\cX)=\lambda\|\cX\|_{\tnn}+\frac{1}{2}\|\cX\|_F^2$ be defined on $\R^{N_2\times K\times N_3}$, and its reduced version $f_M(X)=\lambda\|X\|_*+\frac{1}{2}\|X\|_F^2$ be defined on $\R^{N_2\times K}$. Then given $z\in\partial f(\cX)$, we have
\[
D_{f,Z}(\cX,\cW)=\sum_{j=1}^{N_3}D_{f_M,F(\cZ)_j}(F(\cX)_j,F(\cW)_j).
\]
\end{lemma}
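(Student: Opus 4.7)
The key observation is that both terms in $f$ decouple along the frontal slices of $F(\cdot)$: the regularizer by the very definition $\|\cY\|_\tnn=\sum_{j=1}^{N_3}\|F(\cY)_j\|_*$, and the squared Frobenius norm by a Parseval-type identity which, in the paper's Fourier convention (Lemma \ref{lem:fourier}), reads $\|\cY\|_F^2=\sum_{j=1}^{N_3}\|F(\cY)_j\|_F^2$. My plan is therefore to expand the Bregman distance from its definition,
\[
D_{f,\cZ}(\cX,\cW)=f(\cW)-f(\cX)-\langle \cZ,\cW-\cX\rangle,
\]
and rewrite each of the three pieces slice-by-slice in the Fourier domain.

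For the two function-value pieces I would substitute the slice decompositions above so that $f(\cW)-f(\cX)$ becomes $\sum_j \bigl(\lambda\|F(\cW)_j\|_*+\tfrac12\|F(\cW)_j\|_F^2-\lambda\|F(\cX)_j\|_*-\tfrac12\|F(\cX)_j\|_F^2\bigr)$. For the inner-product piece I would apply the Parseval identity once more (using the real part of the Hermitian matrix inner product on each complex slice) to get $\langle \cZ,\cW-\cX\rangle=\sum_j\langle F(\cZ)_j,F(\cW)_j-F(\cX)_j\rangle$. Reassembling term by term, each $j$-summand is exactly $D_{f_M,F(\cZ)_j}(F(\cX)_j,F(\cW)_j)$, which yields the claim.

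One subtlety is that for the right-hand side to be a bona fide Bregman distance one needs $F(\cZ)_j\in\partial f_M(F(\cX)_j)$ for every $j$. I would derive this from the hypothesis $\cZ\in\partial f(\cX)$ by writing $\cZ=\cX+\lambda\cS$ with $\cS\in\partial\|\cdot\|_\tnn(\cX)$, and then using the slice-wise subdifferential characterization (the subgradient analogue of Lemma \ref{lem:alg2}, which gives the corresponding statement for the proximal map $\sss_\lambda$). This produces $F(\cZ)_j=F(\cX)_j+\lambda F(\cS)_j$ with $F(\cS)_j\in\partial\|\cdot\|_*(F(\cX)_j)$, as needed.

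The main obstacle I anticipate is not the structural calculation but the bookkeeping: one has to verify that the Fourier normalization used in the paper makes both Parseval-type identities carry the same constant (so that the coefficient $\tfrac12$ on the quadratic term is preserved without an extra $1/N_3$), and one has to be careful that complex-valued Fourier slices pair correctly under the real inner product used in $f_M$'s Bregman distance. Once those conventions are pinned down, the identity is a direct term-by-term match.
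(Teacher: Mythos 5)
Your skeleton is exactly what the paper intends: the authors give no written proof of Lemma~\ref{lem:D} beyond the remark that it ``can be shown using the definition of Bregman distance and Lemma~\ref{lem:alg2}'', and your plan---expand $D_{f,\cZ}(\cX,\cW)=f(\cW)-f(\cX)-\langle\cZ,\cW-\cX\rangle$, decompose each piece slice-by-slice in the Fourier domain, and certify $F(\cZ)_j\in\partial f_M(F(\cX)_j)$ via the subgradient analogue of Lemma~\ref{lem:alg2}---is the natural way to carry that hint out. Structurally there is nothing to object to.

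However, the point you defer as ``bookkeeping'' is in fact the crux, and as stated your intermediate identity fails under the paper's conventions. Definition~\ref{def:F} uses the unnormalized $\fft$ along tubes, so Parseval gives $\sum_{j=1}^{N_3}\|F(\cY)_j\|_F^2=N_3\|\cY\|_F^2$ and $\sum_{j=1}^{N_3}\Re\langle F(\cZ)_j,F(\cY)_j\rangle=N_3\langle\cZ,\cY\rangle$, whereas $\sum_{j=1}^{N_3}\|F(\cY)_j\|_*=\|\cY\|_\tnn$ carries no such factor by definition. Hence the right-hand side of the lemma evaluates to $\lambda(\|\cW\|_\tnn-\|\cX\|_\tnn)+\tfrac{N_3}{2}(\|\cW\|_F^2-\|\cX\|_F^2)-N_3\langle\cZ,\cW-\cX\rangle$, which matches $D_{f,\cZ}(\cX,\cW)$ only when $N_3=1$: already for $f=\tfrac12\|\cdot\|_F^2$ and $\cZ=\cX$ the two sides are $\tfrac12\|\cW-\cX\|_F^2$ versus $\tfrac{N_3}{2}\|\cW-\cX\|_F^2$. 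The same factor of $N_3$ reappears in your subgradient step, since the adjoint of $F$ with respect to the real inner products is $N_3F^{-1}$, so writing $\cZ=\cX+\lambda\cS$ gives $F(\cZ)_j=F(\cX)_j+\lambda N_3\cG_j$ with $\cG_j\in\partial\|\cdot\|_*(F(\cX)_j)$, and $N_3\cG_j$ need not be a nuclear-norm subgradient. A complete proof therefore cannot simply ``pin down the conventions and match terms''; it must either work with the unitary DFT (equivalently, insert $1/N_3$ into the definition of $\|\cdot\|_\tnn$, as is standard elsewhere in the t-product literature) or state and prove the identity with the explicit $N_3$ factors, and then propagate that correction into the convergence theorem that invokes this lemma.
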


\begin{theorem}
If $t<2/N_3$, then the sequence generated by Algorithm \ref{alg:tensorrandreg} converges in expectation with
\begin{align}
\E\left[D_{f,\cZ^{(k+1)}}(\cX^{(k+1)},\hat\cX)\right]\leq (1-\frac{\beta\nu}{ \|\cA\|_F^2})\E\left[D_{f,\cZ^{(k)}}(\cX^{(k)},\hat\cX)\right],
\end{align}
where $\beta$ is given by
\begin{equation}\label{eqn:beta}
\beta =\min_{i\in[N_1], j\in[N_3]}t\frac{\|F(\cA(i))_j\|_F^2}{\|F(\cA(i))\|_F^2}(1-\frac{t\|F(\cA(i))_j\|_F^2}{\|F(\cA(i))\|_F^2}).
\end{equation}
\end{theorem}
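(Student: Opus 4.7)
The plan is to reduce the tensor problem to $N_3$ coupled matrix Kaczmarz problems in the Fourier domain. Three slicewise decompositions do most of the work: Lemma \ref{lem:fourier} block-diagonalizes the constraint $\cA*\cX=\cB$ into the $N_3$ matrix equations $F(\cA)_j F(\cX)_j=F(\cB)_j$; Lemma \ref{lem:alg2} shows that $\sss_\lambda$ acts slice-by-slice as the matrix singular value thresholding $D_\lambda=\nabla f_M^*$, where $f_M(X)=\lambda\|X\|_*+\tfrac12\|X\|_F^2$; and Lemma \ref{lem:D} splits the Bregman distance as $D_{f,\cZ}(\cX,\cW)=\sum_{j=1}^{N_3} D_{f_M,F(\cZ)_j}(F(\cX)_j,F(\cW)_j)$. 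The key lever is that $f_M$ is admissible (Example \ref{exa:admissible}(b)), which allows the randomized argument of Theorem \ref{thm:tensorDrate} to be run on each slice even though $f$ itself is not immediately known to be admissible.

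First I would push the $\cZ$-update through the Fourier transform. Using the Parseval identity $\|\cA(i)\|_F^2=\frac{1}{N_3}\|F(\cA(i))\|_F^2$, each slice $j$ executes a matrix Kaczmarz step with effective stepsize proportional to the slice weight $s_{ij}:=\|F(\cA(i))_j\|_F^2/\|F(\cA(i))\|_F^2$. The assumption $t<2/N_3$ keeps the per-slice stepsize strictly below $2$, so Proposition \ref{thm:tensorD} (matrix form, $N_3=1$, $\alpha_f=1$) applies and yields the per-slice descent
\[
D_{f_M,F(\cZ^{(k+1)})_j}(F(\cX^{(k+1)})_j,F(\hat\cX)_j)\le D_{f_M,F(\cZ^{(k)})_j}(F(\cX^{(k)})_j,F(\hat\cX)_j)-\gamma_{ij}\,\frac{\|F(\cA(i))_j(F(\cX^{(k)})_j-F(\hat\cX)_j)\|_F^2}{\|F(\cA(i))_j\|_F^2},
\]
where $\gamma_{ij}$ is of the $\tau(1-\tau/2)$ shape that, after substituting the effective per-slice step, is bounded below by $\beta$ as defined in \eqref{eqn:beta}.

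Next I would sum over $j$ and invoke Lemma \ref{lem:D} to re-assemble both sides into tensor Bregman distances. Taking the conditional expectation over $i(k)\sim\|\cA(\cdot)\|_F^2/\|\cA\|_F^2$ and using identity \eqref{equ:ai} collapses the probability-weighted sum of slicewise residuals into $\|\cA*(\cX^{(k)}-\hat\cX)\|_F^2/\|\cA\|_F^2$, exactly as in the proof of Theorem \ref{thm:tensorDrate}. Finally, invoking Lemma \ref{lem:tensoressential} with the admissibility constant $\nu$ to bound this tensor residual below by $\nu\,D_{f,\cZ^{(k)}}(\cX^{(k)},\hat\cX)$ closes the recursion and produces the claimed contraction factor $1-\beta\nu/\|\cA\|_F^2$.

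The hard part will be justifying admissibility in the tensor setting so that the constant $\nu$ is legitimate: Example \ref{exa:admissible}(b) covers only the matrix $f_M$. My plan to handle this is to lift slicewise admissibility using the same Fourier machinery, by writing the dual \eqref{equ:dual} of the tensor problem as a sum of $N_3$ decoupled matrix duals $g_{f_M}$ (up to Parseval factors), each restricted-strongly-convex on its level set by admissibility of $f_M$, and then aggregating across slices to obtain a joint $\nu$. A secondary bookkeeping nuisance is tracking the Parseval-induced $N_3$ factors that link the per-slice effective step $\tilde\tau_j$ to the stated $\beta$; careful substitution in $\gamma_{ij}=\tilde\tau_j(1-\tilde\tau_j/2)$ identifies the worst slice and produces the formula in \eqref{eqn:beta}.
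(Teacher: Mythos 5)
Your proposal follows the same overall route as the paper: both reduce Algorithm \ref{alg:tensorrandreg} to $N_3$ matrix Kaczmarz iterations in the Fourier domain via Lemma \ref{lem:fourier} (and Lemma \ref{lem:alg1} for the adjoint step), use Lemma \ref{lem:alg2} to turn $\sss_\lambda$ into slicewise $D_\lambda$, identify the effective per-slice stepsize $s_j=t\|F(\cA(i))_j\|_F^2/\|F(\cA(i))\|_F^2\le 2$, and reassemble with Lemma \ref{lem:D}. The one genuine divergence is where admissibility enters. The paper never needs the tensor objective $f(\cX)=\lambda\|\cX\|_\tnn+\frac12\|\cX\|_F^2$ to be admissible: it applies the matrix-level contraction of Corollary \ref{cor:matrix}(b) (whose $\nu$ is the admissibility constant of $f_M(X)=\lambda\|X\|_*+\frac12\|X\|_F^2$ from Example \ref{exa:admissible}(b)) to each slice subproblem separately, using the inequality $p_i/\|F(\cA(i))_j\|_F^2\ge 1/\|\cA\|_F^2$, and only afterwards sums over $j$ with Lemma \ref{lem:D}. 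You instead reassemble the Bregman distance first and then invoke Lemma \ref{lem:tensoressential}, which forces you to establish admissibility of $f$ at the tensor level --- precisely the obstruction the paper states as its reason for introducing Lemma \ref{lem:D}. Your sketched fix (the dual \eqref{equ:dual} decouples into $N_3$ matrix duals in the Fourier domain, each restricted strongly convex on its level sets, and a level set of the sum embeds into the product of slicewise level sets since each summand is bounded below by its minimum) is viable, but it is an extra lemma the paper's order of operations avoids; reversing your last two steps --- contract each slice first, sum second --- eliminates the burden entirely. Two bookkeeping cautions: your "collapse" of the probability-weighted slicewise residuals into $\|\cA*(\cX^{(k)}-\hat\cX)\|_F^2/\|\cA\|_F^2$ is an inequality, not an identity, because the denominators $\|F(\cA(i))_j\|_F^2$ vary with $j$; and your Parseval factor $\|\cA(i)\|_F^2=\frac{1}{N_3}\|F(\cA(i))\|_F^2$ (correct for the unnormalized DFT, and arguably more careful than the paper, which equates the two norms) would carry an extra $N_3$ into the per-slice stepsize and hence into $\beta$, so reproducing \eqref{eqn:beta} exactly requires fixing the normalization convention consistently with the paper's.
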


\begin{proof}
The objective function can be written as
\[
f(\cX)=\lambda\|\cX\|_{\tnn}+\frac{1}{2}\|\cX\|_F^2=\sum_{j=1}^{N_3}\left(\lambda\|F(\cX)_j\|_*+\frac{1}{2}\|F(\cX)_j\|_F^2\right).
\]
In light of Lemma \ref{lem:fourier}, the constraint can be expressed in the Fourier domain as
\[
F(\cA)_jF(\cX)_j=F(\cB)_j,\quad j\in[N_3].
\]
Let $\cV\in\mathbb{R}^{N_2\times K\times N_3}$ and $\cV_k$ be the $k$th frontal slice of $\cV$. Therefore, if
\begin{equation}\label{equ:tensorf}
\hat\cV=\arg\min_\cV\sum_{j=1}^{N_3}\left(\lambda\|\cV_j\|_*+\frac{1}{2}\|\cV_j\|_F^2\right)\quad \st\quad F(\cA)_j\cV_j=F(\cB)_j,\quad j\in[N_3],
\end{equation}
then $F^{-1}(\hat\cV)$ is the minimizer of \eqref{equ:tensorreg}. Note that \eqref{equ:tensorf} can be split into $N_3$ subproblems.

We apply the Fourier transform $F$ to both steps of Algorithm \ref{alg:tensorrandreg}. By Lemma \ref{lem:alg1}, the first step becomes
\[
\begin{aligned}
&\unfold(F(\cZ^{(k+1)}))\\
&\qquad=\unfold(F(\cZ^{(k)}))+\frac{t}{\|\cA(i)\|_F^2}
\diag(F(\cA(i))^*)(F(\cB(i))-\diag(F(\cA(i)))F(\cX^{(k)})),
\end{aligned}
\]
where the block diagonal matrix $\diag(F(\cA(i)))$ and the block matrix $F(\cB(i))$ are defined as
\[
\diag(F(\cA(i)))=\begin{bmatrix}
F(\cA(i))_1&&&\\
&F(\cA(i))_2&&\\
&&\ddots&\\
&&&F(\cA(i))_{N_3}\end{bmatrix},\quad
F(\cB(i))=\begin{bmatrix}F(\cB(i))_1\\F(\cB(i))_2\\\vdots\\ F(\cB(i))_{N_3}\end{bmatrix}.
\]
Likewise, we can define $\diag(F(\cA(i))^*)$ and $F(\cX^{(k)})$. By separating the above updating equation into $N_3$ pieces, we obtain
\begin{equation}\label{equ:f1}
F(\cZ^{(k+1)})_j=F(\cZ^{(k)})_j+\frac{t}{\|\cA(i)\|_F^2}F(\cA(i))_j^*\left(F(\cB(i))_j-F(\cA(i))_jF(\cX^{(k)})_j\right),\quad j\in[N_3].
\end{equation}
By Lemma \ref{lem:alg2}, the second step of Algorithm \ref{alg:tensorrandreg} becomes
\begin{equation}\label{equ:f2}
F(\cX^{(k+1)})_j=D_\lambda(F(\cZ^{(k+1)})_j),\quad j\in[N_3].
\end{equation}
Now we make a change of variables as $F(\cX^{(k)})_j=V_j^{(k)}, F(\cZ^{(k)})_j=W_j^{(k)}$. Then \eqref{equ:f1}-\eqref{equ:f2} become
\begin{align}\label{equ:f11}
&W^{(k+1)}_j=W^{(k)}_j+\frac{t}{\|\cA(i)\|_F^2}F(\cA(i))_j^*\left(F(\cB(i))_j-F(\cA(i))_jV^{(k)}_j\right),\quad j\in[N_3],\\
\label{equ:f22}&V^{(k+1)}_j=D_\lambda(W^{(k+1)}_j),\quad j\in[N_3].
\end{align}
Note that \eqref{equ:f11}-\eqref{equ:f22} are the two major iteration steps for solving the matrix recovery problem
\begin{equation}
\hat V_j=\arg\min_{V_j}\lambda\|V_j\|_*+\frac{1}{2}\|V_j\|_F^2\quad \st\quad F(\cA)_jV_j=F(\cB)_j, j\in[N_3].
\end{equation}
This is in fact the $j$th subproblem of \eqref{equ:tensorf}, so we wish to apply Corollary \ref{cor:matrix}(b). To fit into the format of \eqref{equ:mreg1}, we rewrite
\[
\frac{t}{\|\cA(i)\|_F^2}=\frac{t}{\|F(\cA(i))\|_F^2}=\frac{t\|F(\cA(i))_j\|_F^2}{\|F(\cA(i))\|_F^2}\frac{1}{\|F(\cA(i))_j\|_F^2}.
\]
The stepsize given by
\[
s_j=\frac{t\|F(\cA(i))_j\|_F^2}{\|F(\cA(i))\|_F^2}\leq t\leq 2/N_3\leq2
\]
fits the assumption of Corollary \ref{cor:matrix}. The probability distribution is $p_i=\frac{\|F(\cA(i))\|_F^2}{\|F(\cA)\|_F^2}$. We compute
\[
\frac{p_i}{\|F(\cA(i))_j\|_F^2}=\frac{\|F(\cA(i))\|_F^2}{\|F(\cA)\|_F^2}\frac{1}{\|F(\cA(i))_j\|_F^2}\geq\frac{1}{\|F(\cA)\|_F^2}=\frac{1}{\|\cA\|_F^2}.
\]
So applying Corollary \ref{cor:matrix}(b) yields
\begin{align}
\E\left[D_{f_M,W_j^{(k+1)}}(V_j^{(k+1)},  \hat V_j)\right]\leq \left(1-\frac{\nu s_j(1-\frac{s_j}{2})}{ \|\cA\|_F^2}\right)\E\left[D_{f_M,W_j^{(k)}}(V_j^{(k)},  \hat V_j)\right].
\end{align}
Finally, using Lemma \ref{lem:D}, we have
\[\begin{aligned}
\E\left[D_{f,\cZ^{(k+1)}}(\cX^{(k+1)},\hat\cX)\right]&=\sum_{j=1}^{N_3}\E\left[D_{f_M,W^{(k+1)}_j}(V_j^{(k+1)},  \hat V_j)\right]\\
&\leq \sum_{j=1}^{N_3}\left(1-\frac{\nu s_j(1-\frac{ s_j}{2})}{ \|\cA\|_F^2}\right)\E\left[D_{f_M,W_j^{(k)}}(V_j^{(k)},  \hat V_j)\right]\\
&\leq (1-\frac{\nu\beta}{ \|\cA\|_F^2})\E\left[D_{f,\cZ^{(k)}}(\cX^{(k)},\hat\cX)\right],
\end{aligned}\]
where
\[
\beta=\min_{i\in[N_1],j\in[N_3]}s_j(1-s_j/2)
=\min_{i\in[N_1],j\in[N_3]}t\frac{\|F(\cA(i))_j\|_F^2}{\|F(\cA(i))\|_F^2}(1-\frac{t\|F(\cA(i))_j\|_F^2}{\|F(\cA(i))\|_F^2})>0.
\]
\end{proof}

\section{Numerical Experiments}\label{sec:exp}
In this section, we illustrate the performance of the proposed algorithms in several application problems, including one-dimensional sparse signal recovery, low-rank image inpainting, low-rank tensor recovery, and image deblurring. There are two special cases of our proposed algorithms when the constraint selection sequence is either cyclic or random. For the random version, we take the average of all the results obtained by running 50 trials. To save the computational time, we only execute the deterministic version with a cyclic control sequence for image inpainting and deblurring tests. For image deblurring, we also consider a batched version of the proposed algorithm to improve the performance. However, the batching technique has shown very limited performance enhancement in our other experiments, so we skip those experiments.

To make fair performance comparisons, we adopt the widely used quantitative metrics. For tensor recovery, we use the relative error (RelErr) defined as
\[
\text{RelErr}=\frac{\norm{\cX-\hat\cX}_F}{\norm{\hat\cX}_F},
\]
where $\cX$ is the estimation of the ground truth tensor $\hat\cX$. As one of the most important image quality metrics, the peak signal-to-noise ratio (PSNR) is defined as
\[
\text{PSNR}=20\log(I_{\max}/\|\hat X-X\|_F),
\]
where $X$ is the estimation of the noise-free image $\hat X$ and $I_{\max}$ is the maximum possible image intensity.
In addition, the structural similarity index  (SSIM) between two images $X$ and $Y$ is defined as
\[
\text{SSIM}=\frac{(2\mu_x\mu_y+C_1)(2\sigma_{xy}+C_2)}{(\mu_x^2+\mu_y^2+C_1)(\sigma_x^2+\sigma_y^2+C_2)},
\]
where $\mu_x,\sigma_x$ are the mean and standard deviation of the image $X$, $\sigma_{xy}$ is the cross-covariance between $X$ and $Y$, and $C_1$ and $C_2$ are the luminance and contrast constants. Both PSNR and SSIM values can be obtained efficiently in MATLAB via \verb"PSNR" and \verb"SSIM" respectively.

All the numerical experiments are implemented using MATLAB R2019a for Windows 10 on a desktop PC with 64GB RAM and a 3.10GHz Intel Core i9-9960X CPU.

\subsection{One-dimensional Sparse Signal Recovery}
We will compare the performance of the following three methods for solving \eqref{equ:rl1}: (1) linearized Bregman iteration~\cite{Y08Breg, COS09} (denoted by LinBreg); (2) alternating direction method of multipliers (ADMM)~\cite{admm}; (3) our proposed regularized Kaczmarz method \eqref{equ:skm} with random or deterministic cyclic  sequence (denoted by RK-rand and RK-cyc). In particular, LinBreg has the following iterations
\begin{equation}\label{equ:lb}
\begin{aligned}
\vz^{(k+1)}&=\vz^{(k)}+tA^T(\vb-A \vx^{(k)}),\\
\vx^{(k+1)}&=S_\lambda(\vz^{(k+1)}).
\end{aligned}
\end{equation}
For ADMM, we rewrite \eqref{equ:rl1} as
\[
\min_{\vx,\vw}\frac{1}{2}\|\vx\|_2^2+\lambda\|\vw\|_1,\quad\st\quad A\vx=\vb,\, \vw=\vx,
\]
and the corresponding augmented Lagrangian reads as
\[
L(\vx,\vw,\vu_1,\vu_2)=\frac{1}{2}\|\vx\|_2^2+\lambda\|\vw\|_1+\frac{\rho_1}{2}\|A\vx-\vb+\vu_1\|_2^2+\frac{\rho_2}{2}\|\vx-\vw+\vu_2\|_2^2.
\]

In our experiment, $A$ is a $200\times1000$ Gaussian matrix. The ground truth vector $\hat \vx$ is a 10-sparse vector whose support is randomly generated and the entries on each support index are independent and follow the normal distribution with mean 1 and standard deviation 1.  The parameters are all tuned to achieve optimal performance. For the regularized Kaczmarz method, the step size is $t=40$, and we show the results when the indices are chosen cyclically or randomly. For the linearized Bregman iteration, the step size $t=20$. For ADMM, we pick $\rho_1=10$ and $\rho_2=100$.

Figure~\ref{fig:1dten} shows the relative error of all four methods versus the running time. Both versions of the regularized Kaczmarz algorithms are outperforming LinBreg and ADMM. Moreover, the cyclic version of the regularized Kaczmarz method performs slightly better than the randomized one.

\begin{figure}
\centering
\includegraphics[width=.5\textwidth]{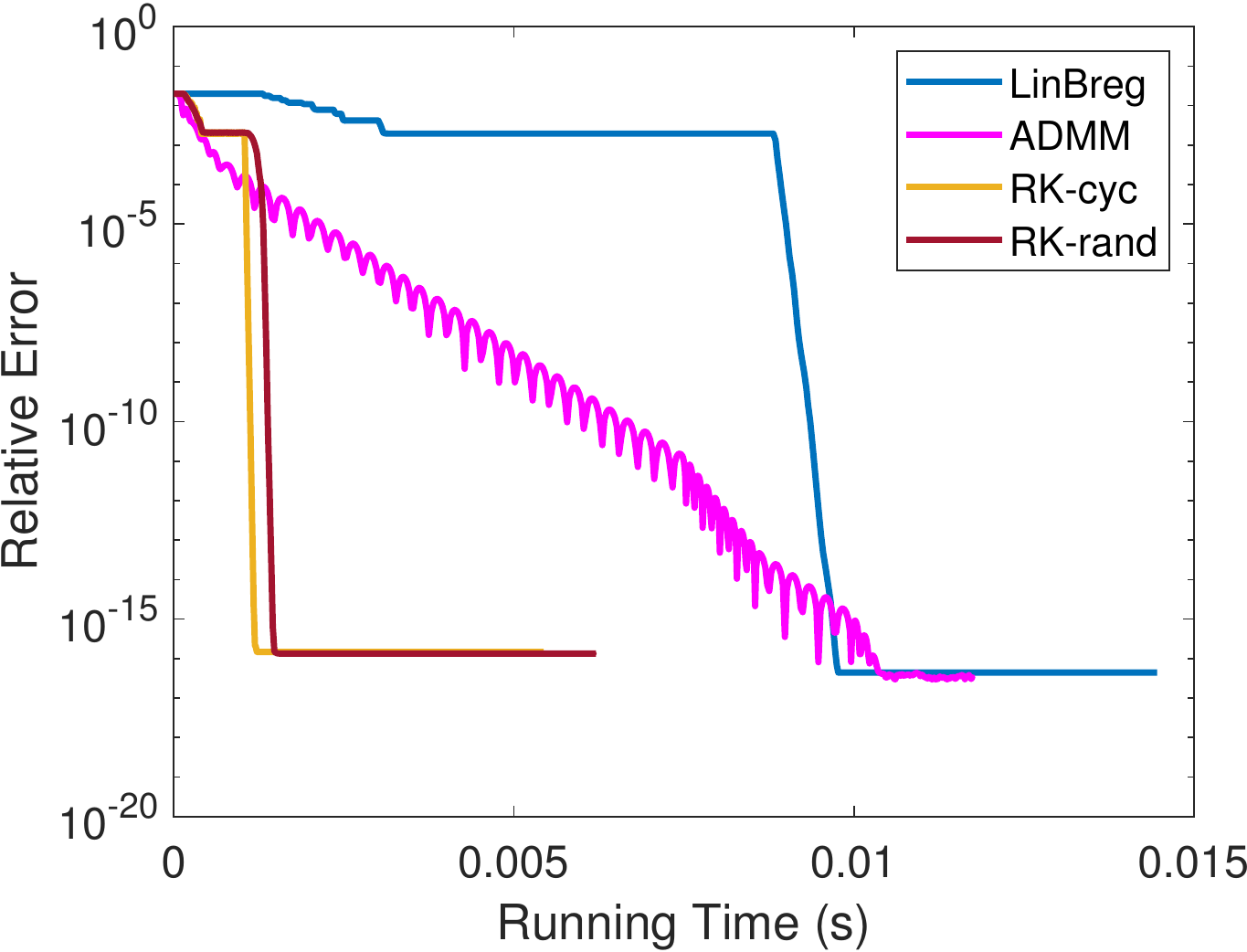}
\caption{One-dimensional sparse signal recovery.}\label{fig:1dten}
\end{figure}

\subsection{Image Inpainting}\label{subsec:inpaint}
In the second experiment, we consider a low-rank image inpainting problem. The test image is a checker board image of size $128\times128$ with a large missing rectangular area; see the first image of Figure \ref{fig:checker}. This image can be described by a rank-two matrix, so it is appropriate to be recovered via the model \eqref{equ:rnn}.  Let $I$ be the image to be recovered, and $\Omega$ be the set of indices whose pixel values are known. Then the linear constraints are $\{\langle P_{ij}, X\rangle=I_{ij}, (i,j)\in\Omega\}$, where $P_{ij}$ is the matrix whose entries are all zeros except its $ij$th entry is one. We use $X_\Omega$ for keeping the pixel intensities in $\Omega$ and setting other pixel intensities to be zero.

We consider three image inpainting methods: (1) total variation (TV) based image inpainting \cite{shen2002mathematical,getreuer2012total}, (2) linearized Bregman iteration \cite{CCZ10} (denoted by LinBreg as before), (3) the proposed regularized Kaczmarz method~\ref{equ:matrix2}. For LinBreg, we adopt the following updating scheme in analogy to \eqref{equ:lb}
\begin{equation}\label{equ:matrix3}
\begin{array}{l}
Z^{(k+1)}=Z^{(k)}+t(I-X^{(k)})_\Omega,\\
X^{(k+1)}=D_{\lambda}(Z^{(k+1)}).
\end{array}
\end{equation}
The second is the regularized Kaczmarz method \eqref{equ:matrix2}. In this application, the first step of \eqref{equ:matrix2} becomes $Z^{(k+1)}=Z^{(k)}+t(I-X^{(k)})_{(i,j)}$, which means only one pixel from $\Omega$ is updated. Again, the choice of indexing can be cyclic or random. In our numerical experiment, we will use a more general version  $Z^{(k+1)}=Z^{(k)}+\sum_{(i,j)\in T(k)}t(I-X^{(k)})_{ij}=Z^{(k)}+t(I-X^{(k)})_{T(k)}$, where $T(k)\subset\Omega$, so that $|T(k)|$ many pixels are updated in one iteration. A different index set $T(k)$ is chosen at each iteration $k$, but we do require that they have the same \emph{batch size}, i.e., the cardinality $|T(k)|=b$. %In summary, the second algorithm (proposed algorithm) is
Therefore, our algorithm for this specific case reads as
\begin{equation}\label{equ:matrix4}
\begin{array}{l}
Z^{(k+1)}=Z^{(k)}+t(I-X^{(k)})_{T(k)},\\
X^{(k+1)}=D_{\lambda}(Z^{(k+1)}).
\end{array}
\end{equation}
In the methods involving SVT, we choose $\lambda=1500$.

\begin{figure}[htb]
\centering\setlength{\tabcolsep}{2pt}
\begin{tabular}{cccc}
\includegraphics[width=.24\textwidth]{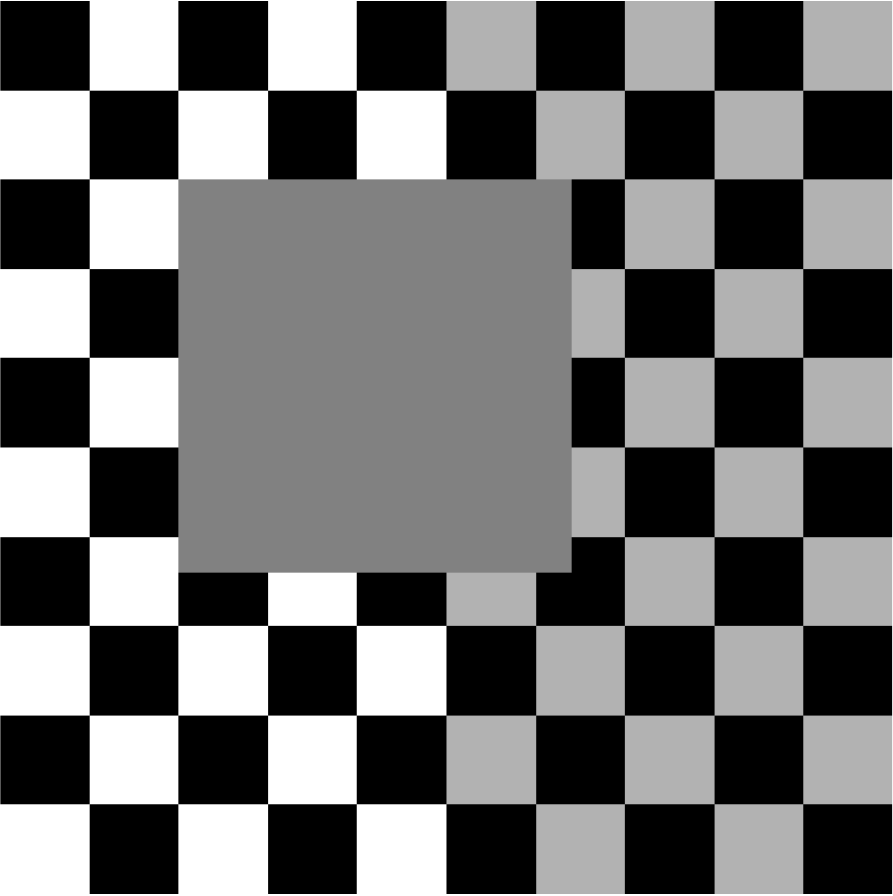}&
\includegraphics[width=.24\textwidth]{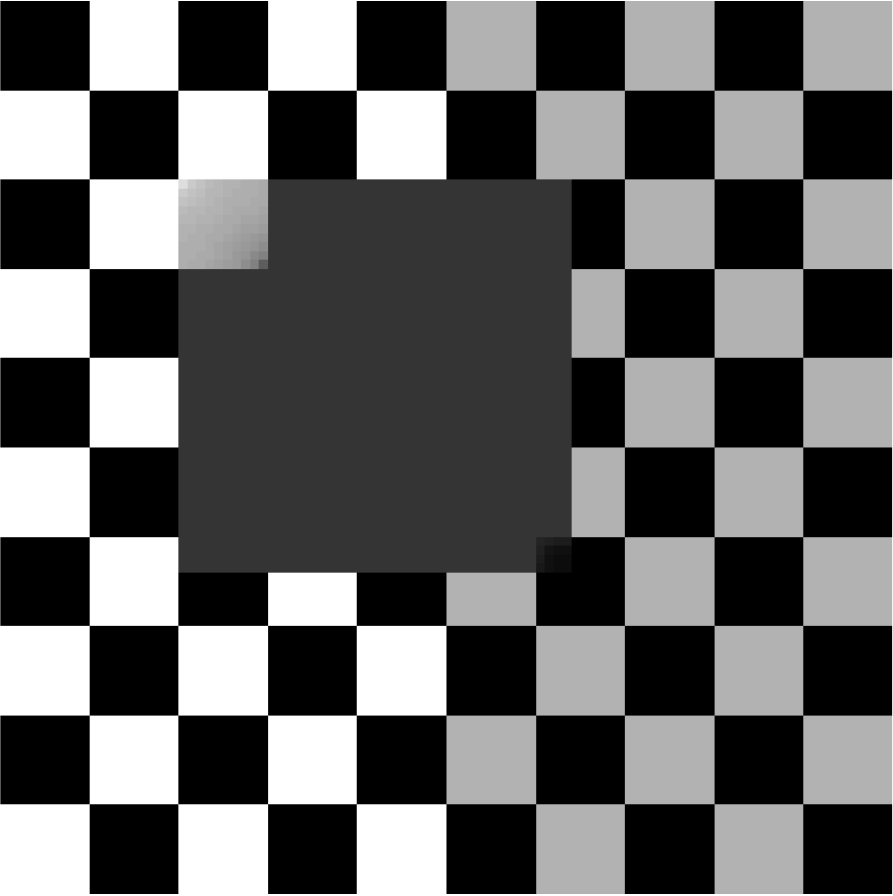}&
\includegraphics[width=.24\textwidth]{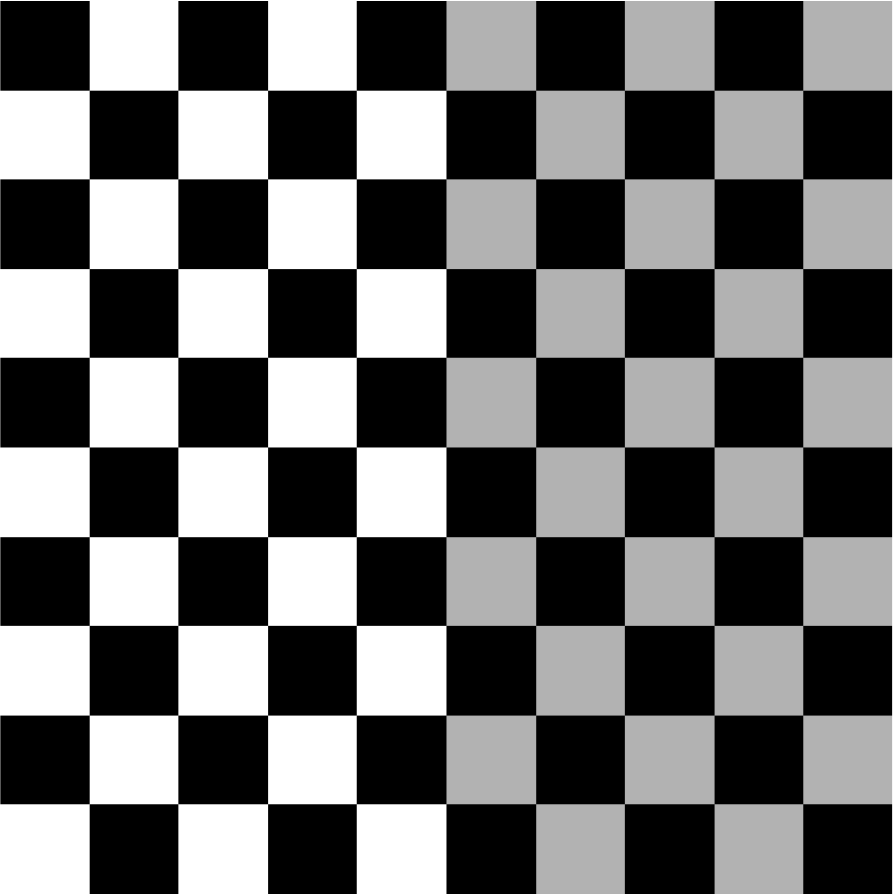}&
\includegraphics[width=.24\textwidth]{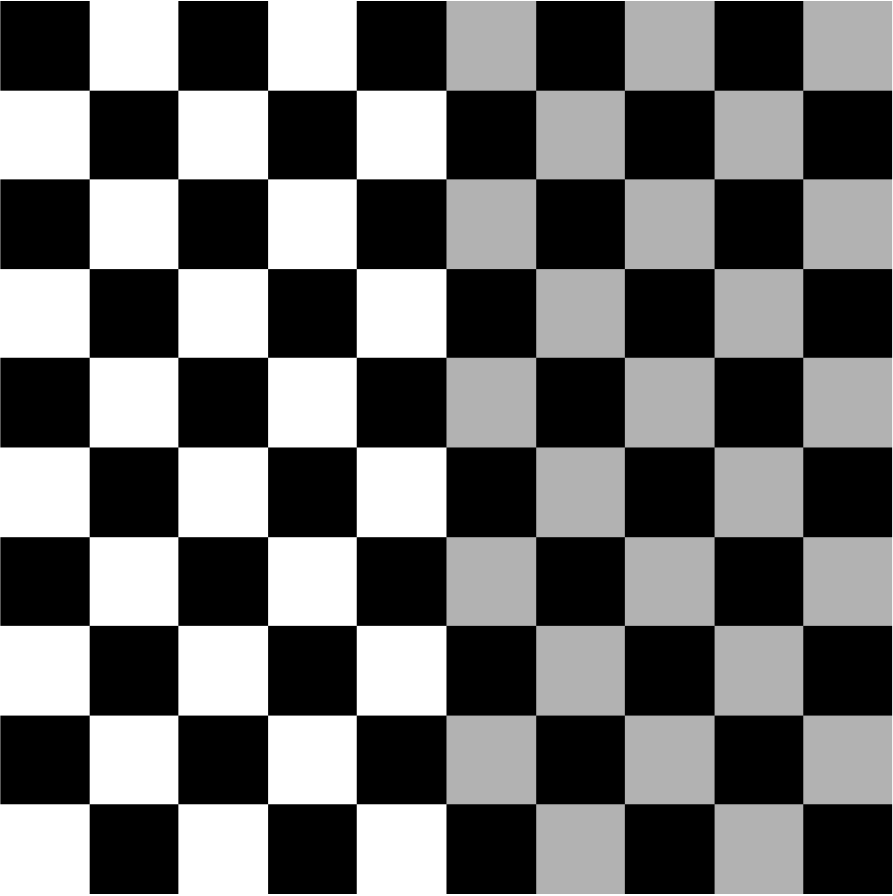}\\
Original&TV   &LinBreg   &Proposed \\
        &fail&PSNR=40.96&PSNR=76.17
\end{tabular}
\caption{Image inpainting without noise. The original checker board image has a missing box. The linearized Bregman result uses \eqref{equ:matrix3} with $t=1$. Our result uses \eqref{equ:matrix4} with $t=9$ and batch size 2000. The running times are: 1.24s (TV), 0.95s (LinBreg), 0.60s (proposed).}\label{fig:checker}
\end{figure}

Figure~\ref{fig:checker} compares all the listed methods quantitatively and qualitatively. For the TV result, the image was recovered by minimizing the functional $\lambda\|\nabla X_x\|_1+\|\nabla X_y\|_1+\frac{1}{2}\|(X-I)_\Omega\|_2^2$. The TV regularization only considers the piecewise constant type of smoothness and thus it may not handle texture-like images with a low-rank structure very well. Since the missing area is relatively large in the test image, TV fails to recover the image as expected. As shown in the last two sub-figures of Figure \ref{fig:checker}, either the linearized Bregman iteration, or the regularized Kaczmarz iteration is able to achieve almost perfect reconstruction. In our regularized Kaczmarz iteration, we choose batch size to be 2000 with a cyclic indexing. The regularized Kaczmarz enjoys faster convergence.

The batch size $b$ plays an important role in a lot of optimization algorithms, e.g., stochastic gradient descent. In this application, $b$ can be viewed as the number of pixels updated in step 1 of \eqref{equ:matrix4}. If $b=|\Omega|$, then our algorithm coincides with \eqref{equ:matrix3}. This is related to Stochastic Gradient Matching Pursuit (StoGradMP)~\cite{needell2016stochastic} especially with a random choice of the constraints. Some other relevant works include the block Kaczmarz algorithm~\cite{needell2014paved, CP16}. The proof presented in this paper can also be adapted to show that the following algorithm
\begin{equation}\label{equ:matrixb}
\left\{
\begin{aligned}
&\text{Pick an index set $T(k)$ whose cardinality is $b$,} \\
&Z^{(k+1)}=Z^{(k)}+t\left(P_{span(A_i, i\in T(k))}\hat X-P_{span(A_i, i\in T(k))}X^{(k)}\right),\\
&X^{(k+1)}=D_{\lambda}(Z^{(k+1)})
\end{aligned}
\right.
\end{equation}
produces a sequence $\{X^{(k)}\}$ that converges to the solution of \eqref{equ:rnn}. Note that in the image inpainting problem, \eqref{equ:matrix4} is exactly this block version \eqref{equ:matrixb} due to the orthogonality of the indicator matrices $P_{ij}$. The batch size does influence the convergence of \eqref{equ:matrix4}. For this particular example, we have $|\Omega|=8064$ and we found a batch size around 2000 to be optimal.

\subsection{Low-Rank Tensor Recovery}
Assume that the ground truth tensor $\cX\in\R^{N_2\times K\times N_3}$ with a small tubal rank satisfies the tensor system
\[
\cA*\cX=\cB,\quad\cA\in\R^{N_1\times N_2\times N_3}.
\]
Then we consider the tensor recovery model \eqref{equ:tensorreg} with the tensor nuclear norm regularization. It can be empirically shown that the proposed algorithm~\ref{alg:tensor} can achieve good performance in terms of accuracy and convergence speed when $N_1$ is larger than the other dimensions $N_2,N_3,K$ but may fail to converge in other scenarios. As an illustration, we show the performance of our algorithm with cyclic and random control sequences in Figure~\ref{fig:tensys} where $N_1=200$, $N_2=N_3=K=100$ and the maximal number of iterations as 2000. Both the coefficient tensor $\cA$ and the ground truth tensor $\cX$ are normally distributed, and the tubal rank of $\cX$ is two by taking the hard thresholding of singular tubes after t-SVD. For the random case, we take the average of 50 trials. One can see that the cyclic control sequence achieves faster convergence with much smaller error but with slightly more running time.

\begin{figure}
\centering
\includegraphics[width=.5\textwidth]{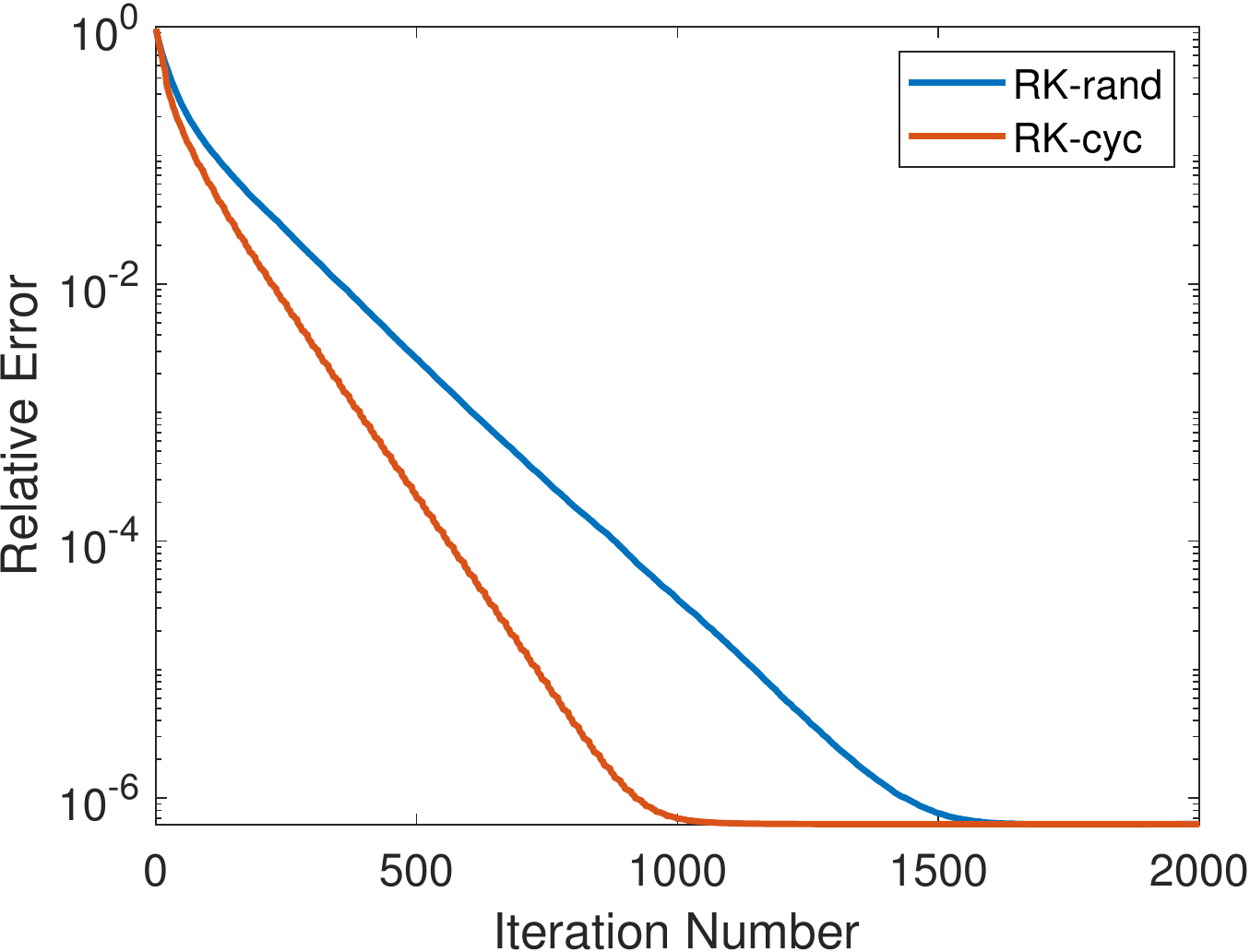}
\caption{Low-rank tensor recovery. Running times for the random and cyclic control sequences are: 0.8721 s and 0.8612 s, respectively. Both run 2000 iterations.}\label{fig:tensys}
\end{figure}

\subsection{Tensor-Based Image Deconvolution}\label{subsec:decov}
Consider an image $\widehat{X}\in\mathbb{R}^{m_1\times n_1}$ that is degraded by taking the convolution with a point spread function $\widehat{H}\in\mathbb{R}^{m_2\times n_2}$. Let $m=m_1+m_2-1$ and $n=n_1+n_2-1$. Using the zero padding, both $\widehat{X}$ and $\widehat{H}$ can be extended to two respective matrices $X,H$ of size $m\times n$. By the construction, we have
\[
H\circledast X=B,
\]
where $\circledast$ is the 2D convolution. Next we establish the equivalence between 2D convolution and t-product by creating a doubly block circulant matrix \cite[Appendix]{abidi2016optimization}.
Let $h_i$ be the $i$th column of $H$, and $A_i:=\mathrm{circ}(h_i)\in\R^{n\times n}, i\in[m]$ be the circulant matrix generated by $h_i$. Let $A_i$ be the $i$th frontal slice of $\cA\in\R^{n\times n\times m}$. Then we create a doubly block circulant matrix $\circc(\cA)$ based on $\cA$. Let $\cX\in\mathbb{R}^{n\times 1\times m}$ be the tensor version of $X$ by setting $\cX(j,1,i) = X(i,j)$ for $i\in[m]$ and $j\in[n]$. Then the 2D convolution can be represented as
\[
\vect(H\circledast X)=\circc(\cH)\vect(X)=\cA*\cX.
\]
Here $\vect(\cdot)$ is a vectorization operator by row-wise stacking. To recover $\cX$, we consider the low-rank tensor recovery model
\[
\min_{\cX\in\R^{n\times 1\times m}}\lambda\norm{\cX}_*\quad \st\quad \cA*\cX=\cY
\]
where $\cY\in\R^{n\times 1\times m}$ is the tensor version of the observed blurry image $Y\in\R^{m\times n}$. Note that the tubal rank of the ground truth $\cX$ is at most one since the second dimension of $\cX$ is one, which implies that $\cX$ is low-rank. The conversion between $Y$ and $\cY$ is the same as that between $X$ and $\cX$. Next we apply Algorithm \ref{alg:tensorrandreg} to recover $\cX$ and thereby the image $X$.

We test an image ``house'' of size $256\times 256$, which is degraded by a Gaussian convolution kernel of size $9\times 9$ with standard deviation 2. Then we compare various image deblurring methods, including TV image deblurring \cite{chan2011augmented}, nonlocal means (NLM) \cite{buades2011non}, BM3D \cite{dabov2006image}, and our algorithm with batch sizes $b=20,40,60,80$ and $\alpha=1$ $\lambda=0.1$. Here TV, NLM and BM3D are performed in the plug-and-play ADMM image recovery framework \cite{chan2016plug}. The MATLAB sources codes can be found in \url{https://www.mathworks.com/matlabcentral/fileexchange/60641-plug-and-play-admm-for-image-restoration}. To avoid ringing artifacts, we extend the image symmetrically along the boundary $14$ pixels, apply the algorithm and cut the results back to the normal size. Figure~\ref{fig:tendeblur} show the images recovered by TV, NLM, BM3D and our algorithm with $b=80$. All results are compared quantitatively in Table~\ref{tab:tendeblur} in terms of PSNR, SSIM and running time.
\begin{figure}
\centering
\begin{tabular}{ccc}
\includegraphics[width=.3\textwidth]{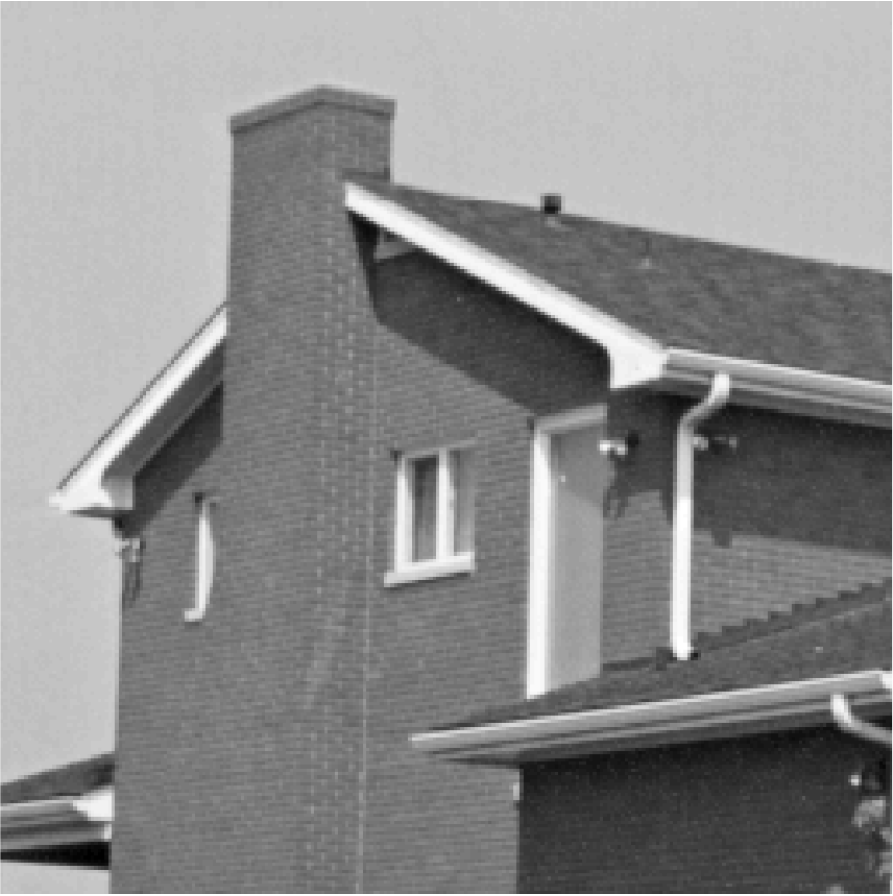}&
\includegraphics[width=.3\textwidth]{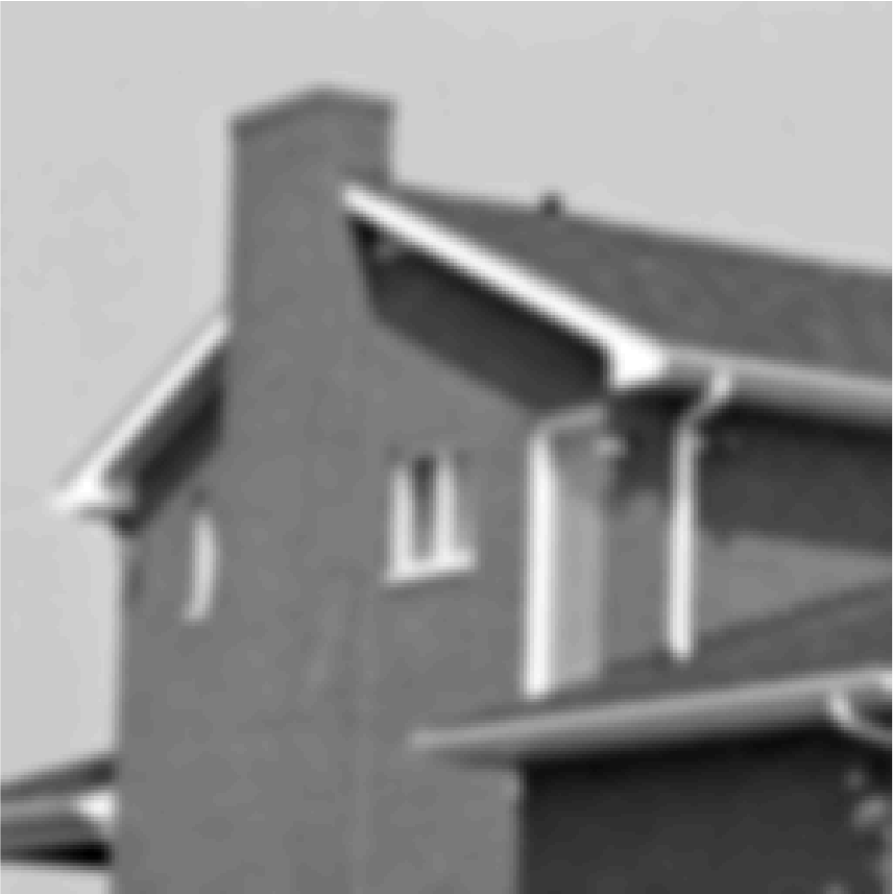}&
\includegraphics[width=.3\textwidth]{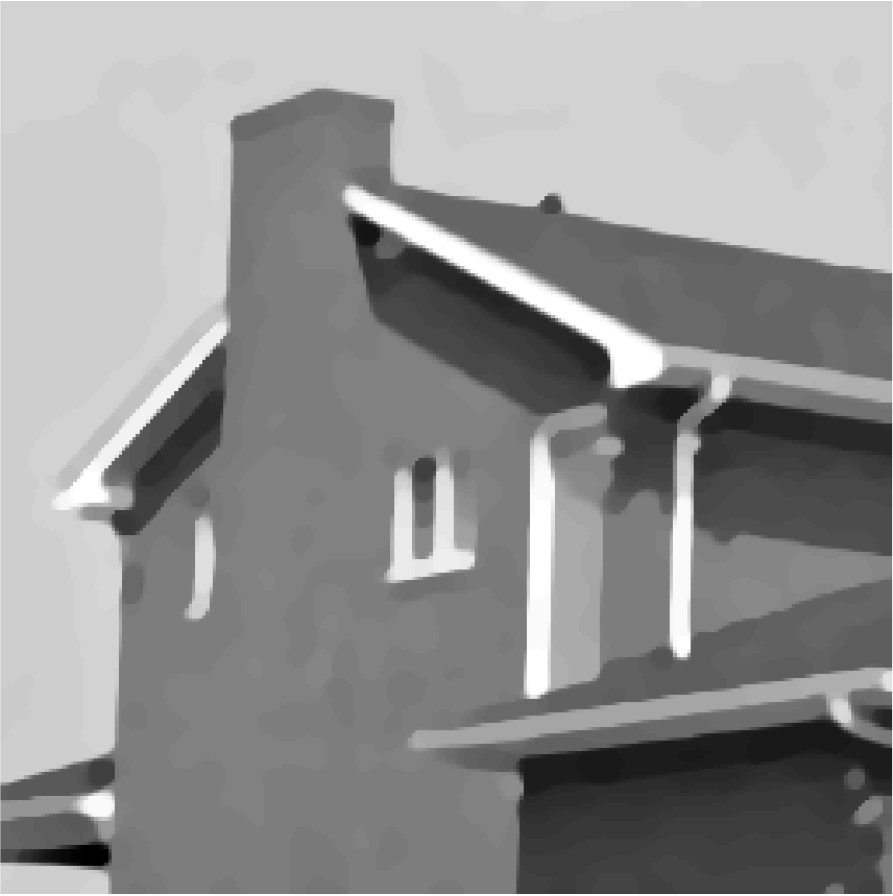}\\
Clean & Blurry & TV\\
\includegraphics[width=.3\textwidth]{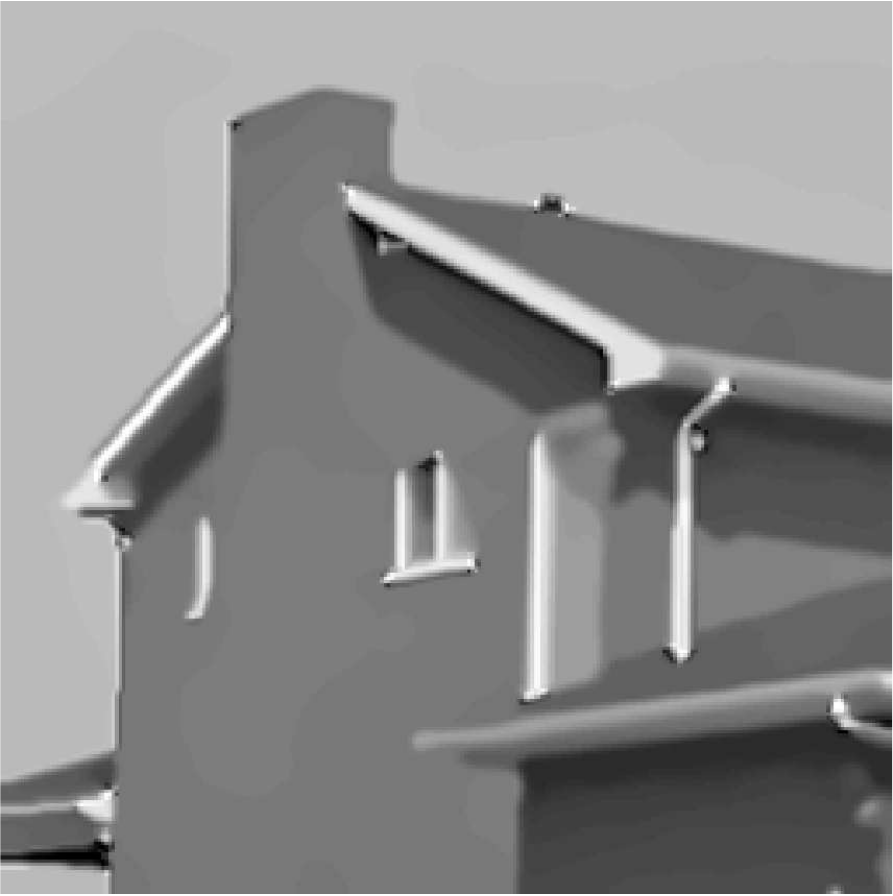}&
\includegraphics[width=.3\textwidth]{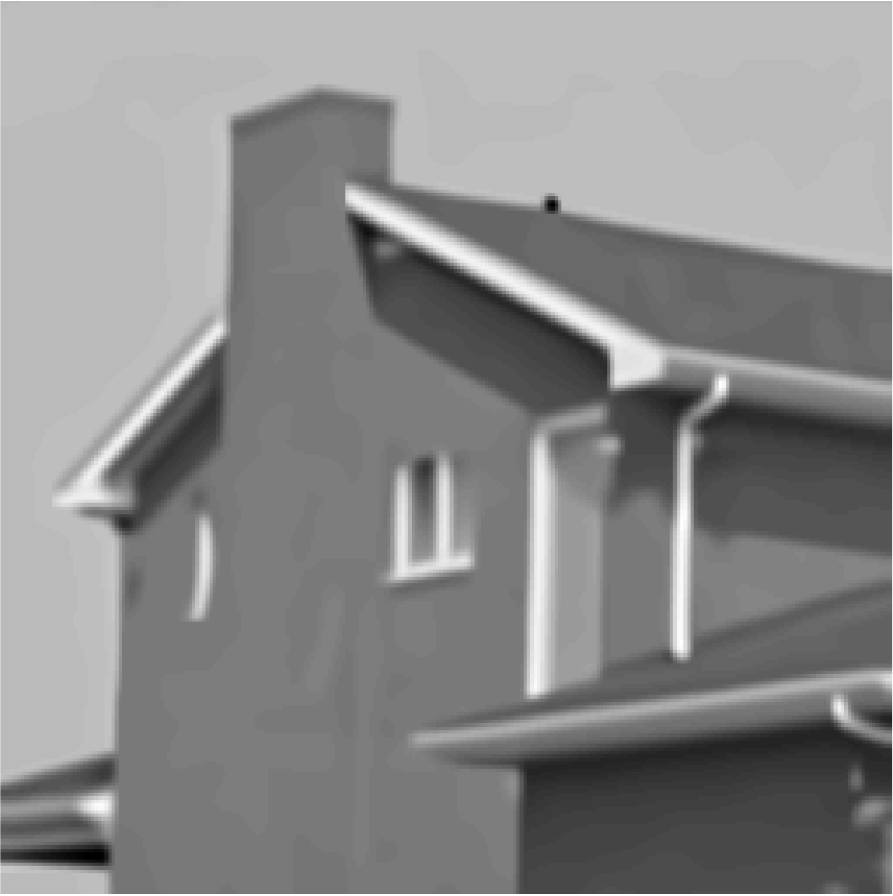}&
\includegraphics[width=.3\textwidth]{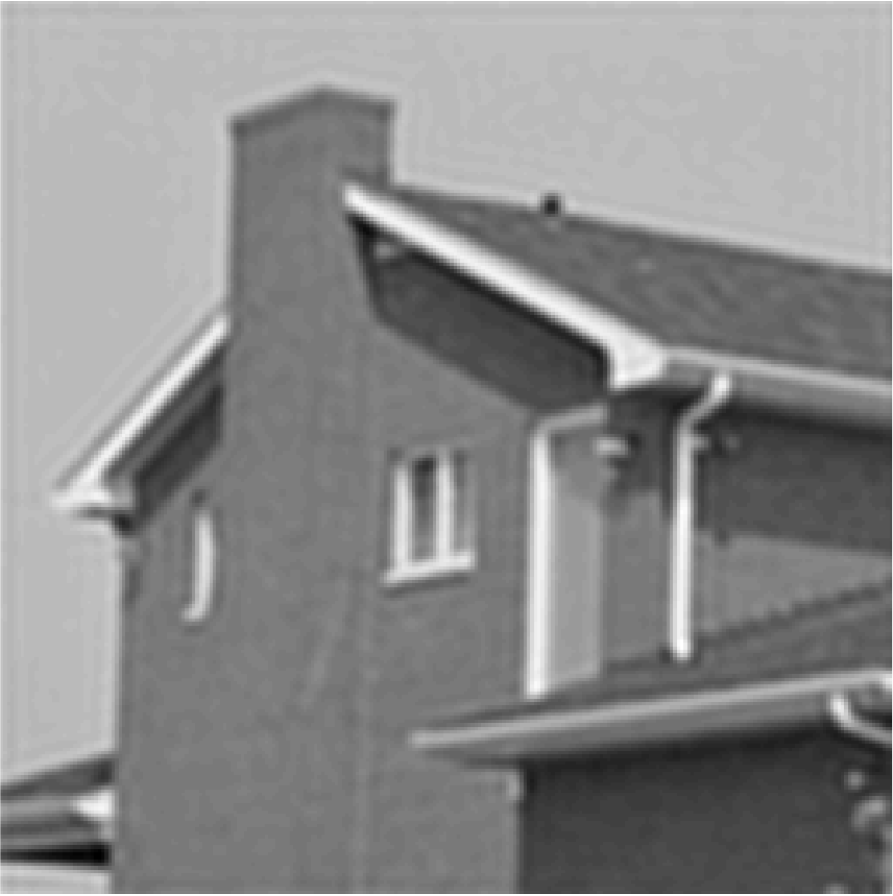}\\
NLM & BM3D & Proposed
\end{tabular}
\caption{Visual comparison of various image deblurring methods, including TV, NLM, BM3D and our proposed tensor-based deblurring algorithm with sequential control sequence and blocksize $b=80$.}\label{fig:tendeblur}
\end{figure}

\begin{table}
\centering
\begin{tabular}{c|c|c|c}
\hline \hline
Method & PSNR & SSIM & Running Time (s) \\ \hline
TV & 29.48 & 0.8180 & 33.34\\
NLM & 29.16 & 0.8113 & 29.27\\
BM3D & 30.69 & 0.8381 & 318.58\\
$b=20$ & 30.90 & 0.8257 & 53.34\\
$b=40$ & 31.10 & 0.8451& 72.88\\
$b=60$ & 31.11 & 0.8457 & 97.96\\
$b=80$ & 31.12 & 0.8461 & 122.13\\
\hline\hline
\end{tabular}
\caption{Quantitative comparison of various image deblurring methods. Rows 5-8: the proposed tensor-based image deblurring algorithm with sequential control sequence and batch size $b=20,40,60,80$. }\label{tab:tendeblur}
\end{table}

More generally, we consider an image sequence $\cX\in\mathbb{R}^{n\times p\times m}$ with $p$ frames (a video). Assume that all frames are convolved with the same spatial blurring kernel in its extended tensor form $\cA\in\mathbb{R}^{m\times m\times n}$. As an illustrative example, we test the 3D MRI image data set \verb"mri" in MATLAB which consists of 12 slices of size $128\times 128$ from an MRI data scan of a human cranium. When $p\ll \min\{m,n\}$, the ground truth $\cX$ can be considered as low-rank. Each blurry image is generated by convolving the ground truth with a Gaussian convolution kernel of size $5\times 5$ with standard deviation 2. The parameters are $\alpha=1$, $\lambda=10^{-2}$ and the maximum iteration number is 1000, and the batch size is 60. Figure~\ref{fig:tendeblur} shows the first four frames of blurry observations and their respective recovered image. To suppress ringing artifacts, projection of all intensities onto the positive values is set as a postprocessing step.
\begin{figure}
\centering\setlength{\tabcolsep}{1pt}
\begin{tabular}{cccc}
\includegraphics[width=.24\textwidth]{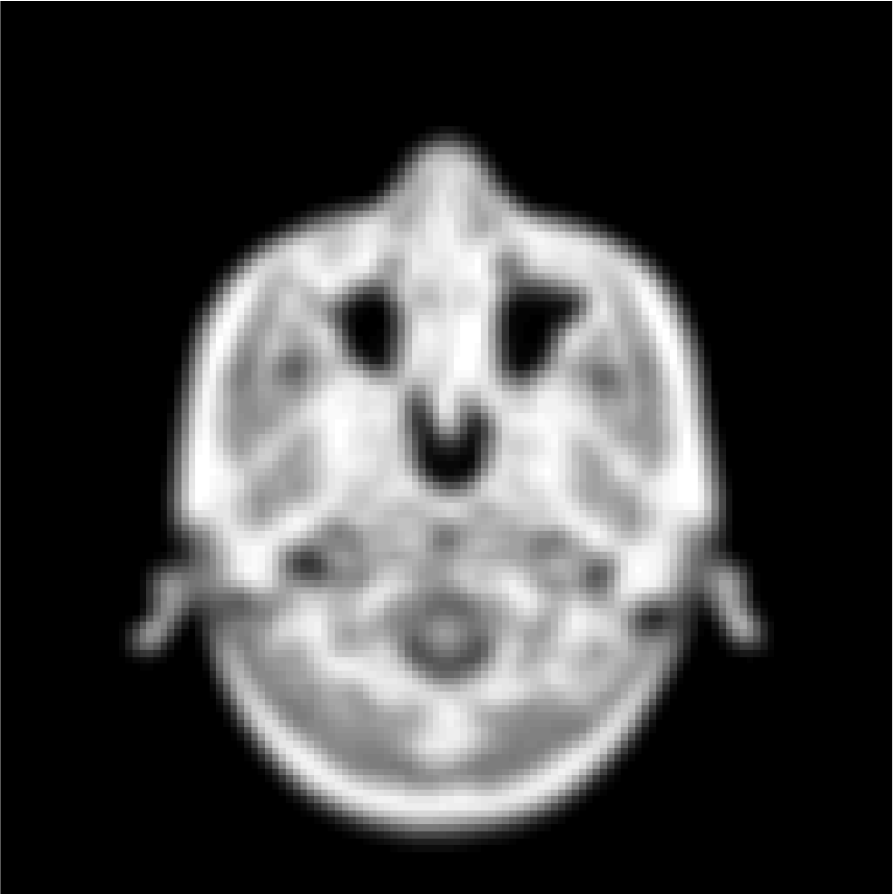}&
\includegraphics[width=.24\textwidth]{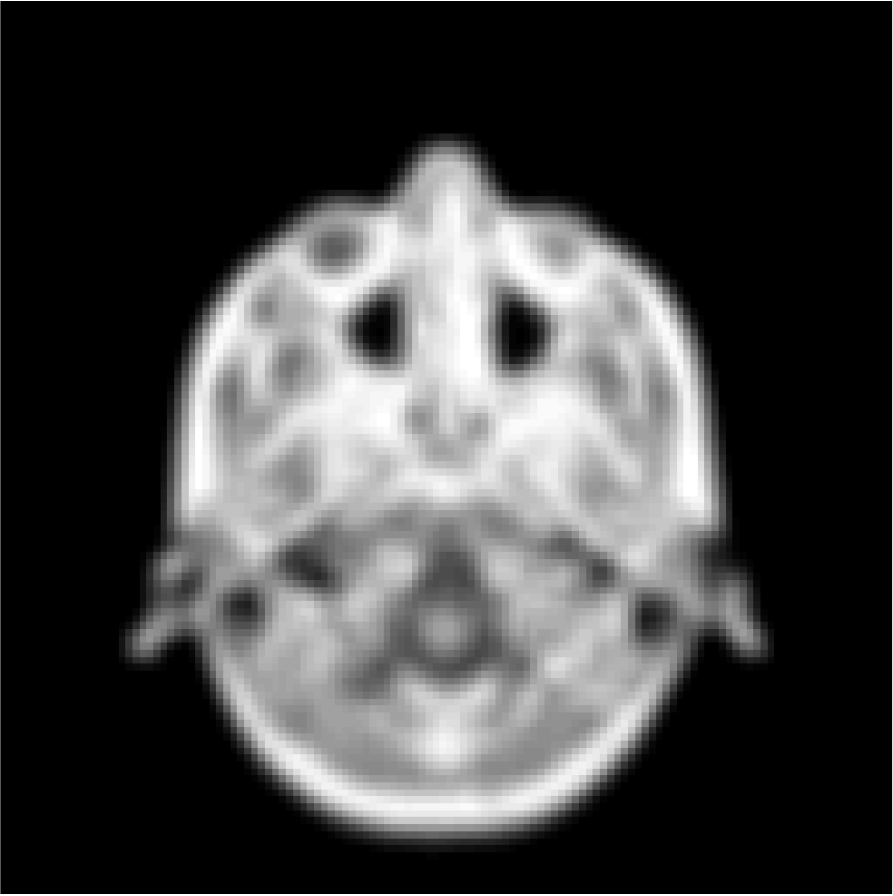}&
\includegraphics[width=.24\textwidth]{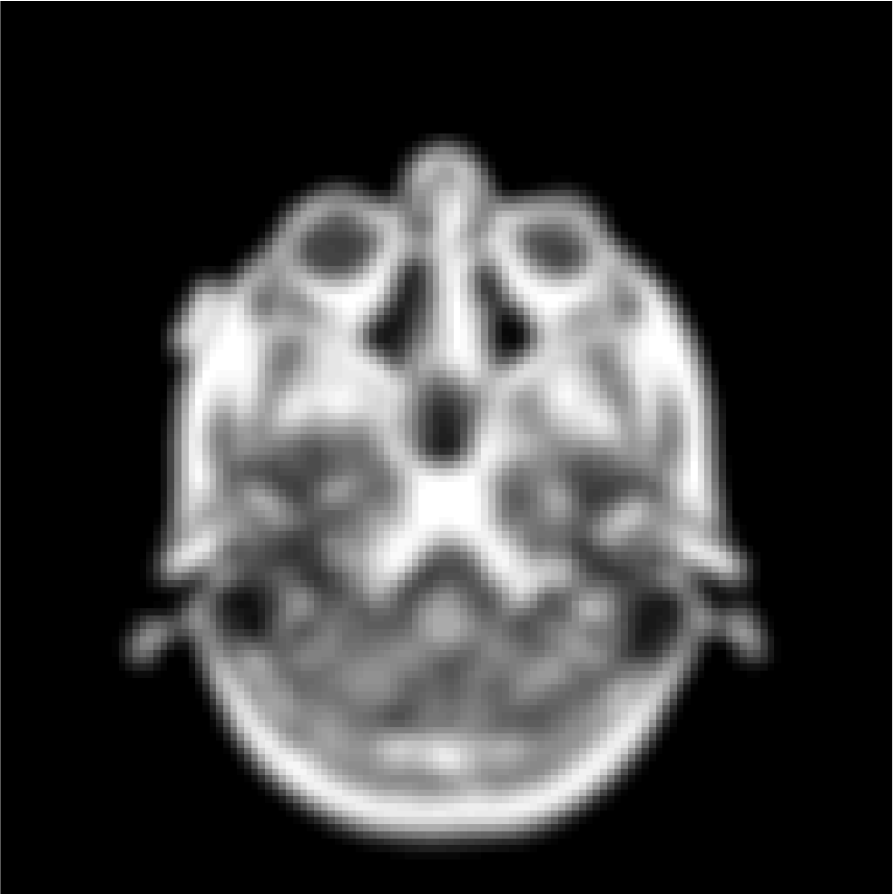}&
\includegraphics[width=.24\textwidth]{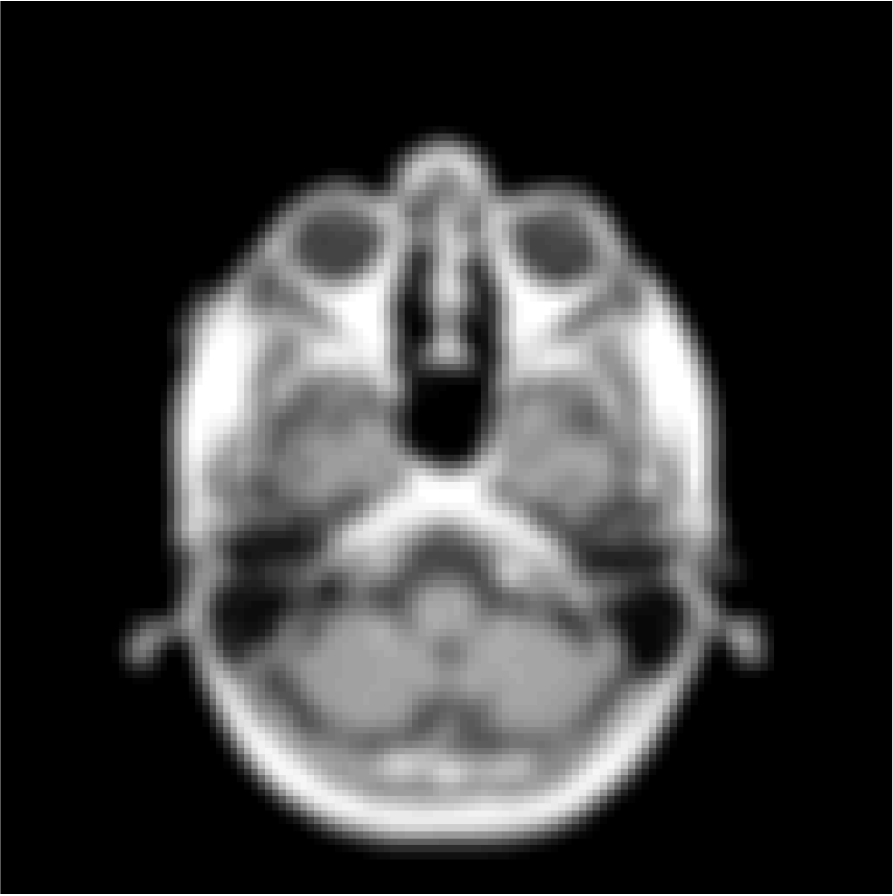}\\
\includegraphics[width=.24\textwidth]{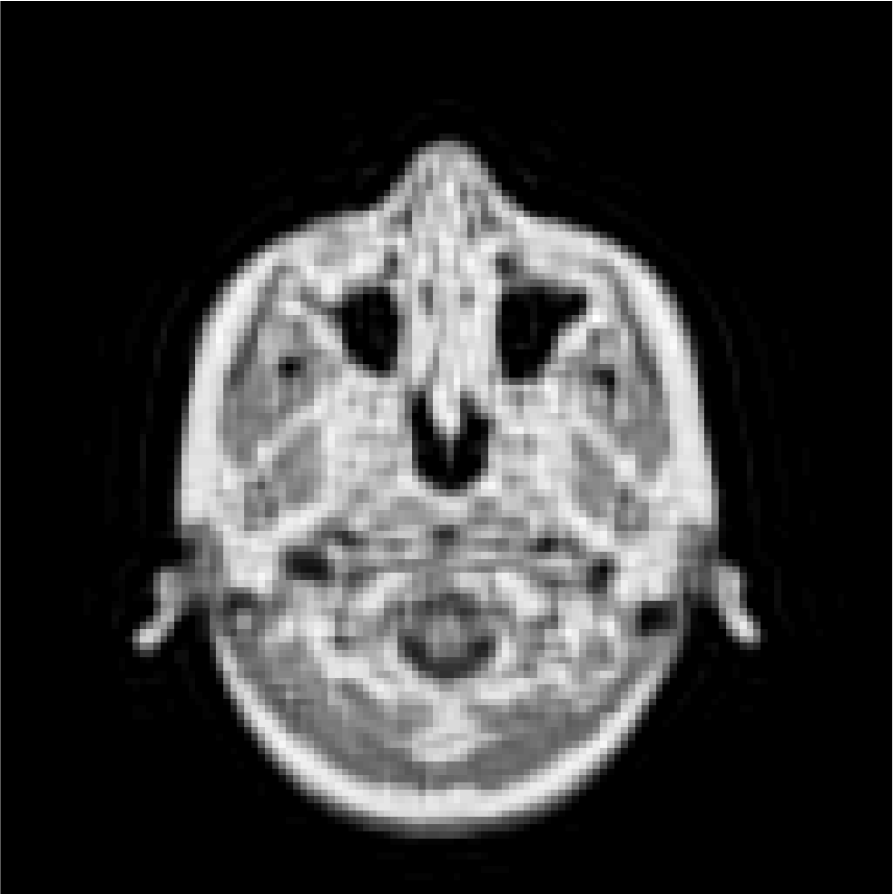}&
\includegraphics[width=.24\textwidth]{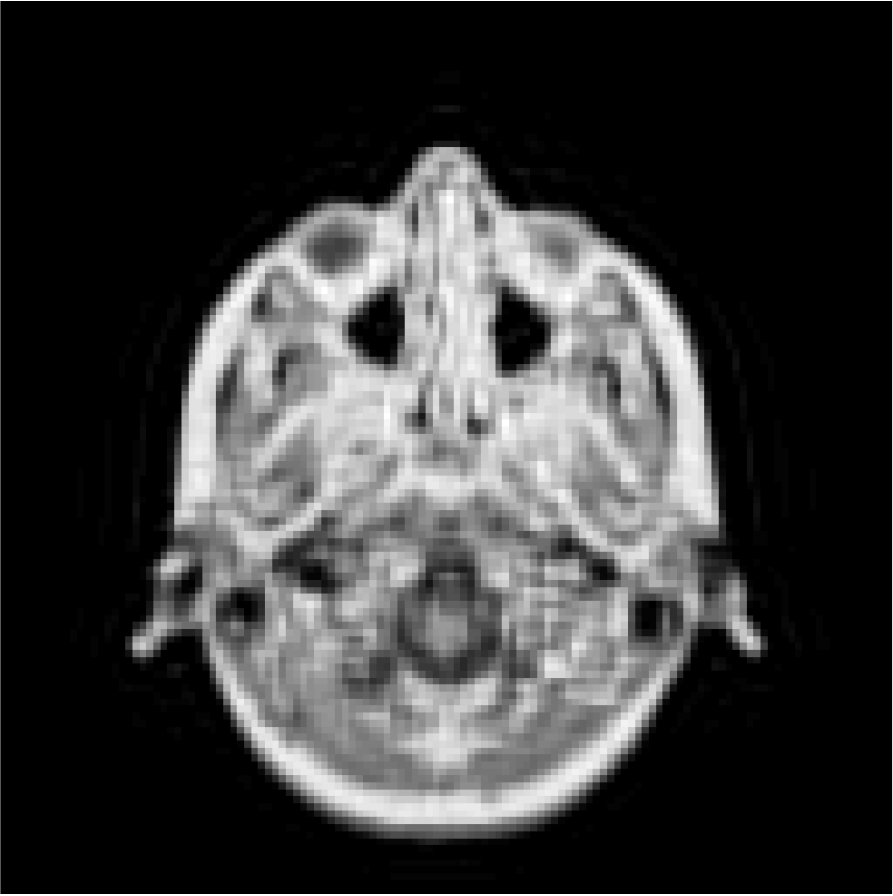}&
\includegraphics[width=.24\textwidth]{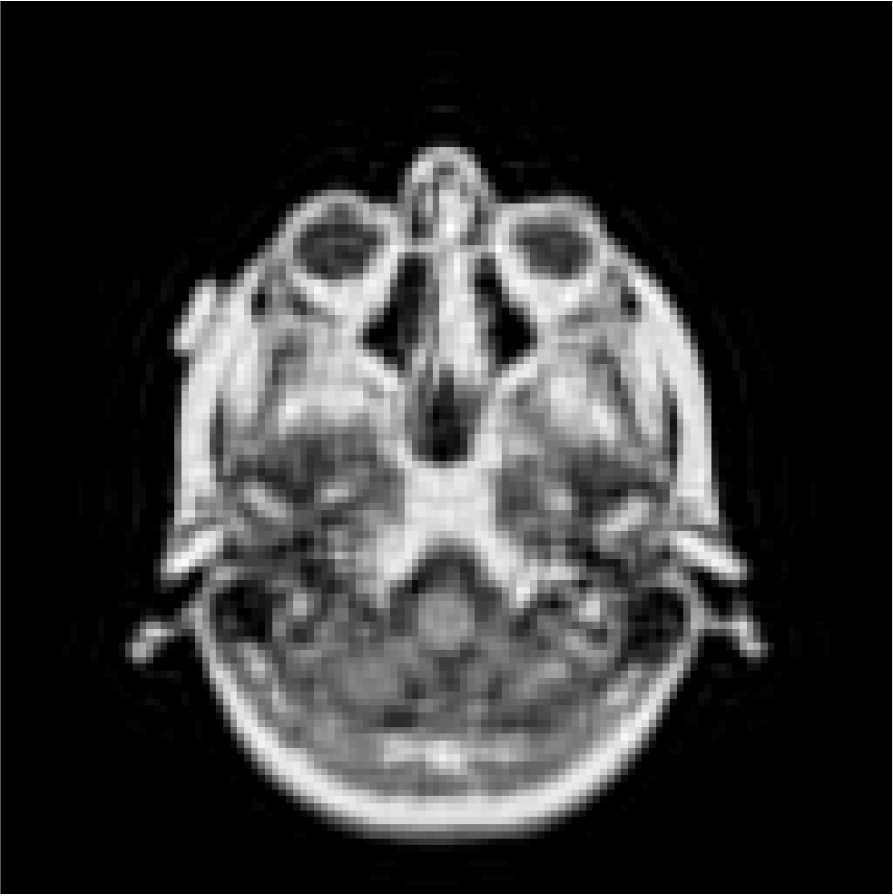}&
\includegraphics[width=.24\textwidth]{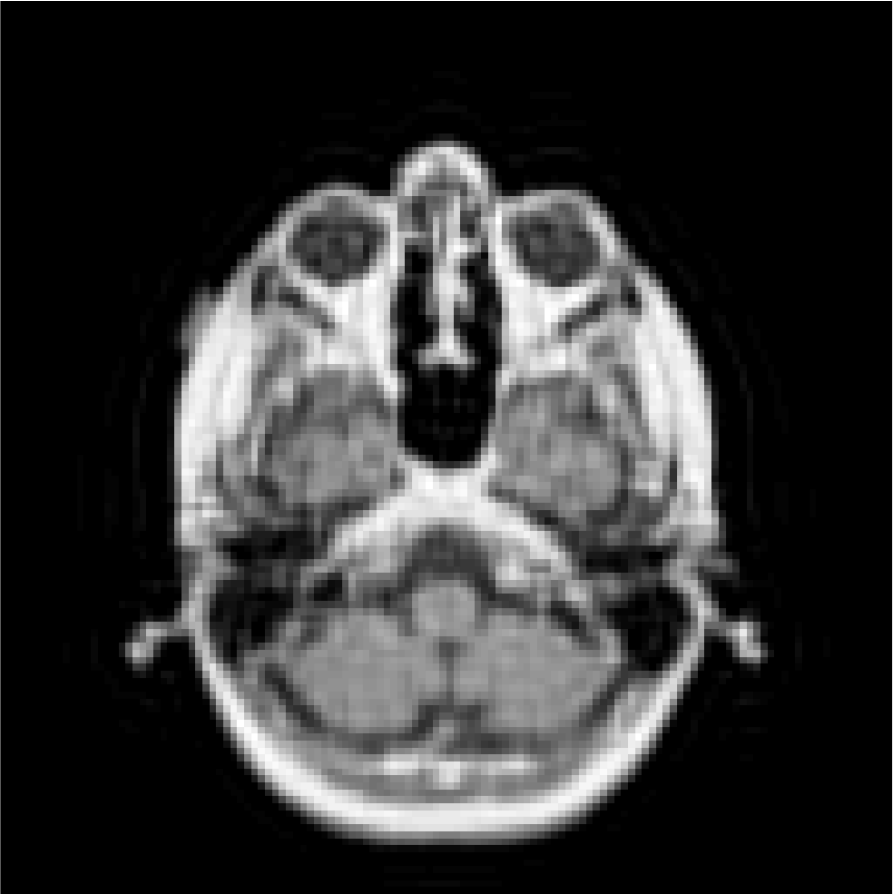}\\
\end{tabular}
\caption{Image Sequence Deblurring. Row 1: blurry images; row 2: recovered images. Overall runtime is 72.90 seconds and the entire relative error between the ground truth and recovered sequences is 13.90\%. }\label{fig:videodeblur}
\end{figure}

\section{Conclusions and Future work}\label{sec:con}
In many tensor recovery problems, the underlying tensor is either sparse or low-rank which can be exploited in the design of efficient algorithms. In this paper, we propose a regularized Kaczmarz algorithm framework for tensor recovery. Precisely, we adopt the t-product for third-order tensors with rapid implementation through the fast Fourier transform, and establish linear convergence rate in expectation for the proposed algorithm with random sequence. In addition, we provide extensive discussions on the matrix and vector recovery together with tensor nuclear norm minimization as special cases. A showcase of numerical experiments demonstrates its considerable potential in various applications, including sparse signal recovery, low-rank tensor recovery, image inpainting and deblurring. In the future, we intend to explore the convergence rate for the deterministic method and discuss theoretical guarantees for its more superior performance over the randomized version. Moreover, it would be extremely intriguing to make a thorough discussion on the acceleration effects by choosing an appropriate batch size or step size. Furthermore, the current framework can be adapted to other types of tensor products or tensors of order higher than three.

\section*{Acknowledgements}
The research of X. Chen is supported by the NSF grant DMS-2050028, and the research of J. Qin is supported by the NSF grant DMS-1941197.

\bibliographystyle{siam}
\bibliography{ref}

\begin{thebibliography}{10}

\bibitem{abidi2016optimization}
{\sc M.~A. Abidi, A.~V. Gribok, and J.~Paik}, {\em Optimization Techniques in
  Computer Vision}, Springer, 2016.

\bibitem{AWL14}
{\sc A.~Agaskar, C.~Wang, and Y.~M. Lu}, {\em Randomized kaczmarz algorithms:
  Exact mse analysis and optimal sampling probabilities}, in 2014 IEEE Global
  Conference on Signal and Information Processing (GlobalSIP), IEEE, 2014,
  pp.~389--393.

\bibitem{andersen1984simultaneous}
{\sc A.~H. Andersen and A.~C. Kak}, {\em Simultaneous algebraic reconstruction
  technique (sart): a superior implementation of the art algorithm}, Ultrasonic
  imaging, 6 (1984), pp.~81--94.

\bibitem{bai2018greedy}
{\sc Z.-Z. Bai and W.-T. Wu}, {\em On greedy randomized kaczmarz method for
  solving large sparse linear systems}, SIAM Journal on Scientific Computing,
  40 (2018), pp.~A592--A606.

\bibitem{bauschke2011convex}
{\sc H.~H. Bauschke, P.~L. Combettes, et~al.}, {\em Convex analysis and
  monotone operator theory in Hilbert spaces}, vol.~408, Springer, 2011.

\bibitem{admm}
{\sc S.~Boyd, N.~Parikh, and E.~Chu}, {\em Distributed optimization and
  statistical learning via the alternating direction method of multipliers},
  Now Publishers Inc, 2011.

\bibitem{buades2011non}
{\sc A.~Buades, B.~Coll, and J.-M. Morel}, {\em Non-local means denoising},
  Image Processing On Line, 1 (2011), pp.~208--212.

\bibitem{CCZ10}
{\sc J.-F. Cai, E.~J. Cand{\`e}s, and Z.~Shen}, {\em A singular value
  thresholding algorithm for matrix completion}, SIAM Journal on optimization,
  20 (2010), pp.~1956--1982.

\bibitem{COS09}
{\sc J.-F. Cai, S.~Osher, and Z.~Shen}, {\em Convergence of the linearized
  bregman iteration for $\ell_1$-norm minimization}, Mathematics of
  Computation, 78 (2009), pp.~2127--2136.

\bibitem{censor1997parallel}
{\sc Y.~Censor, S.~A. Zenios, et~al.}, {\em Parallel optimization: Theory,
  algorithms, and applications}, Oxford University Press on Demand, 1997.

\bibitem{chan2011augmented}
{\sc S.~H. Chan, R.~Khoshabeh, K.~B. Gibson, P.~E. Gill, and T.~Q. Nguyen},
  {\em An augmented lagrangian method for total variation video restoration},
  IEEE Transactions on Image Processing, 20 (2011), pp.~3097--3111.

\bibitem{chan2016plug}
{\sc S.~H. Chan, X.~Wang, and O.~A. Elgendy}, {\em Plug-and-play admm for image
  restoration: Fixed-point convergence and applications}, IEEE Transactions on
  Computational Imaging, 3 (2016), pp.~84--98.

\bibitem{chen2012almost}
{\sc X.~Chen and A.~M. Powell}, {\em Almost sure convergence of the kaczmarz
  algorithm with random measurements}, Journal of Fourier Analysis and
  Applications, 18 (2012), pp.~1195--1214.

\bibitem{CP16}
\leavevmode\vrule height 2pt depth -1.6pt width 23pt, {\em Randomized subspace
  actions and fusion frames}, Constructive Approximation, 43 (2016),
  pp.~103--134.

\bibitem{dabov2006image}
{\sc K.~Dabov, A.~Foi, V.~Katkovnik, and K.~Egiazarian}, {\em Image denoising
  with block-matching and 3d filtering}, in Image Processing: Algorithms and
  Systems, Neural Networks, and Machine Learning, vol.~6064, International
  Society for Optics and Photonics, 2006, p.~606414.

\bibitem{Dai13}
{\sc L.~Dai, M.~Soltanalian, and K.~Pelckmans}, {\em On the randomized kaczmarz
  algorithm}, IEEE Signal Processing Letters, 21 (2013), pp.~330--333.

\bibitem{durgin2019sparse}
{\sc N.~Durgin, R.~Grotheer, C.~Huang, S.~Li, A.~Ma, D.~Needell, and J.~Qin},
  {\em Sparse randomized kaczmarz for support recovery of jointly sparse
  corrupted multiple measurement vectors}, in Research in Data Science,
  Springer, 2019, pp.~1--14.

\bibitem{eldar2011acceleration}
{\sc Y.~C. Eldar and D.~Needell}, {\em Acceleration of randomized kaczmarz
  method via the johnson--lindenstrauss lemma}, Numerical Algorithms, 58
  (2011), pp.~163--177.

\bibitem{fan2017hyperspectral}
{\sc H.~Fan, Y.~Chen, Y.~Guo, H.~Zhang, and G.~Kuang}, {\em Hyperspectral image
  restoration using low-rank tensor recovery}, IEEE Journal of Selected Topics
  in Applied Earth Observations and Remote Sensing, 10 (2017), pp.~4589--4604.

\bibitem{getreuer2012total}
{\sc P.~Getreuer}, {\em Total variation inpainting using split bregman}, Image
  Processing On Line, 2 (2012), pp.~147--157.

\bibitem{gordon1970algebraic}
{\sc R.~Gordon, R.~Bender, and G.~T. Herman}, {\em Algebraic reconstruction
  techniques (art) for three-dimensional electron microscopy and x-ray
  photography}, Journal of theoretical Biology, 29 (1970), pp.~471--481.

\bibitem{herman1993algebraic}
{\sc G.~T. Herman and L.~B. Meyer}, {\em Algebraic reconstruction techniques
  can be made computationally efficient (positron emission tomography
  application)}, IEEE transactions on medical imaging, 12 (1993), pp.~600--609.

\bibitem{jeong2017convergence}
{\sc H.~Jeong and C.~S. G{\"u}nt{\"u}rk}, {\em Convergence of the randomized
  kaczmarz method for phase retrieval}, arXiv preprint arXiv:1706.10291,
  (2017).

\bibitem{karczmarz1937angenaherte}
{\sc S.~Karczmarz}, {\em Angenaherte auflosung von systemen linearer
  glei-chungen}, Bull. Int. Acad. Pol. Sic. Let., Cl. Sci. Math. Nat.,  (1937),
  pp.~355--357.

\bibitem{kilmer2013third}
{\sc M.~E. Kilmer, K.~Braman, N.~Hao, and R.~C. Hoover}, {\em Third-order
  tensors as operators on matrices: A theoretical and computational framework
  with applications in imaging}, SIAM Journal on Matrix Analysis and
  Applications, 34 (2013), pp.~148--172.

\bibitem{KC11}
{\sc M.~E. Kilmer and C.~D. Martin}, {\em Factorization strategies for
  third-order tensors}, Linear Algebra and its Applications, 435 (2011),
  pp.~641--658.

\bibitem{LY13}
{\sc M.-J. Lai and W.~Yin}, {\em Augmented $\ell_1$ and nuclear-norm models
  with a globally linearly convergent algorithm}, SIAM Journal on Imaging
  Sciences, 6 (2013), pp.~1059--1091.

\bibitem{li2013adaptive}
{\sc T.~Li, T.-J. Kao, D.~Isaacson, J.~C. Newell, and G.~J. Saulnier}, {\em
  Adaptive kaczmarz method for image reconstruction in electrical impedance
  tomography}, Physiological measurement, 34 (2013), p.~595.

\bibitem{liu2014asynchronous}
{\sc J.~Liu, S.~J. Wright, and S.~Sridhar}, {\em An asynchronous parallel
  randomized kaczmarz algorithm}, arXiv preprint arXiv:1401.4780,  (2014).

\bibitem{lorenz2014linearized}
{\sc D.~A. Lorenz, F.~Sch\"{o}pfer, and S.~Wenger}, {\em The linearized bregman
  method via split feasibility problems: analysis and generalizations}, SIAM
  Journal on Imaging Sciences, 7 (2014), pp.~1237--1262.

\bibitem{lu2019tensor}
{\sc C.~Lu, J.~Feng, Y.~Chen, W.~Liu, Z.~Lin, and S.~Yan}, {\em Tensor robust
  principal component analysis with a new tensor nuclear norm}, IEEE
  transactions on pattern analysis and machine intelligence, 42 (2019),
  pp.~925--938.

\bibitem{ma2020randomized}
{\sc A.~Ma and D.~Molitor}, {\em Randomized kaczmarz for tensor linear
  systems}, arXiv preprint arXiv:2006.01246,  (2020).

\bibitem{ma2015convergence}
{\sc A.~Ma, D.~Needell, and A.~Ramdas}, {\em Convergence properties of the
  randomized extended gauss--seidel and kaczmarz methods}, SIAM Journal on
  Matrix Analysis and Applications, 36 (2015), pp.~1590--1604.

\bibitem{needell2010randomized}
{\sc D.~Needell}, {\em Randomized kaczmarz solver for noisy linear systems},
  BIT Numerical Mathematics, 50 (2010), pp.~395--403.

\bibitem{needell2016stochastic}
{\sc D.~Needell, N.~Srebro, and R.~Ward}, {\em Stochastic gradient descent,
  weighted sampling, and the randomized kaczmarz algorithm}, Mathematical
  Programming, 155 (2016), pp.~549--573.

\bibitem{needell2014paved}
{\sc D.~Needell and J.~A. Tropp}, {\em Paved with good intentions: analysis of
  a randomized block kaczmarz method}, Linear Algebra and its Applications, 441
  (2014), pp.~199--221.

\bibitem{needell2015randomized}
{\sc D.~Needell, R.~Zhao, and A.~Zouzias}, {\em Randomized block kaczmarz
  method with projection for solving least squares}, Linear Algebra and its
  Applications, 484 (2015), pp.~322--343.

\bibitem{peterson1985applications}
{\sc J.~E. Peterson, B.~N. Paulsson, and T.~V. McEvilly}, {\em Applications of
  algebraic reconstruction techniques to crosshole seismic data}, Geophysics,
  50 (1985), pp.~1566--1580.

\bibitem{petra2015randomized}
{\sc S.~Petra}, {\em Randomized sparse block kaczmarz as randomized dual
  block-coordinate descent}, Analele Universitatii" Ovidius" Constanta-Seria
  Matematica, 23 (2015), pp.~129--149.

\bibitem{popa2004kaczmarz}
{\sc C.~Popa and R.~Zdunek}, {\em Kaczmarz extended algorithm for tomographic
  image reconstruction from limited-data}, Mathematics and Computers in
  Simulation, 65 (2004), pp.~579--598.

\bibitem{rockafellar1970convex}
{\sc R.~T. Rockafellar}, {\em Convex analysis}, no.~28, Princeton university
  press, 1970.

\bibitem{schopfer2016linear}
{\sc F.~Sch\"{o}pfer}, {\em Linear convergence of descent methods for the
  unconstrained minimization of restricted strongly convex functions}, SIAM
  Journal on Optimization, 26 (2016), pp.~1883--1911.

\bibitem{schopfer2019linear}
{\sc F.~Sch{\"o}pfer and D.~A. Lorenz}, {\em Linear convergence of the
  randomized sparse kaczmarz method}, Mathematical Programming, 173 (2019),
  pp.~509--536.

\bibitem{semerci2014tensor}
{\sc O.~Semerci, N.~Hao, M.~E. Kilmer, and E.~L. Miller}, {\em Tensor-based
  formulation and nuclear norm regularization for multienergy computed
  tomography}, IEEE Transactions on Image Processing, 23 (2014),
  pp.~1678--1693.

\bibitem{shen2002mathematical}
{\sc J.~Shen and T.~F. Chan}, {\em Mathematical models for local nontexture
  inpaintings}, SIAM Journal on Applied Mathematics, 62 (2002), pp.~1019--1043.

\bibitem{strohmer2009randomized}
{\sc T.~Strohmer and R.~Vershynin}, {\em A randomized kaczmarz algorithm with
  exponential convergence}, Journal of Fourier Analysis and Applications, 15
  (2009), p.~262.

\bibitem{tan2013tensor}
{\sc H.~Tan, G.~Feng, J.~Feng, W.~Wang, Y.-J. Zhang, and F.~Li}, {\em A
  tensor-based method for missing traffic data completion}, Transportation
  Research Part C: Emerging Technologies, 28 (2013), pp.~15--27.

\bibitem{tan2018phase}
{\sc Y.~S. Tan and R.~Vershynin}, {\em Phase retrieval via randomized kaczmarz:
  Theoretical guarantees}, Information and Inference: A Journal of the IMA, 8
  (2018), pp.~97--123.

\bibitem{wright2015coordinate}
{\sc S.~J. Wright}, {\em Coordinate descent algorithms}, Mathematical
  Programming, 151 (2015), pp.~3--34.

\bibitem{Y08Breg}
{\sc W.~Yin, S.~Osher, D.~Goldfarb, and J.~Darbon}, {\em Bregman iterative
  algorithms for $\ell_1$-minimization with applications to compressed
  sensing}, SIAM Journal on Imaging sciences, 1 (2008), pp.~143--168.

\bibitem{zhou2013tensor}
{\sc H.~Zhou, L.~Li, and H.~Zhu}, {\em Tensor regression with applications in
  neuroimaging data analysis}, Journal of the American Statistical Association,
  108 (2013), pp.~540--552.

\bibitem{zouzias2013randomized}
{\sc A.~Zouzias and N.~M. Freris}, {\em Randomized extended kaczmarz for
  solving least squares}, SIAM Journal on Matrix Analysis and Applications, 34
  (2013), pp.~773--793.

\end{thebibliography}

\end{document}